\newcounter{tmpc} 
\newcommand{\disk}{\ensuremath{\mathbb{D}} } 
\newcommand{\sphere}{\overline{\Bbb{C}}} 
\newcommand{\riem}{\Sigma}  
\renewcommand{\Bbb}[1]{\ensuremath{\mathbb{#1}}}
\newcommand{\Oqc}{\mathcal{O}^{\text{qc}}} 
\newcommand{\Mon}{\widetilde{\mathcal{M}}(0,n+1)}
\newcommand{\Mgn}{\widetilde{\mathcal{M}}(g,n+1)}
\newcommand{\s}{$s$\nobreakdash} 
\newcommand{\n}{$n$\nobreakdash}
\newcommand{\ith}{$i$\nobreakdash-th }
\newcommand{\1}{$1$\nobreakdash}
\newcommand{\Oqco}{\mathcal{O}^{\mathrm{WP}}\!} 
\newcommand{\gr}{\operatorname{\mathbf{G}}}
\newcommand{\Rig}{\mathcal{R}}
\theoremstyle{plain}
        \newtheorem{theorem}{Theorem}[section]
        \newtheorem{lemma}[theorem]{Lemma}
        \newtheorem{proposition}[theorem]{Proposition}
        \newtheorem{corollary}[theorem]{Corollary}
\theoremstyle{definition}
        \newtheorem{definition}[theorem]{Definition}
\theoremstyle{remark}
    \newtheorem{remark}[theorem]{Remark}
\numberwithin{equation}{section} 
\numberwithin{figure}{section} 
\author{David Radnell}
\address{David Radnell \\ Department of Mathematics and Statistics \\
American University of Sharjah \\
PO Box 26666, Sharjah \\ United Arab Emirates} \email{dradnell@aus.edu}
\author{Eric Schippers}
\address{Eric Schippers \\ Department of Mathematics \\
University of Manitoba\\
Winnipeg, Manitoba \\  R3T 2N2 \\ Canada}
\email{eric\_schippers@umanitoba.ca}
\author{Wolfgang Staubach}
\address{Wolfgang Staubach\\ Department of Mathematics\\
Uppsala University\\
Box 480\\ 751 06 Uppsala\\ Sweden}
\email{wulf@math.uu.se}
\subjclass[2010]{Primary 30F60, 30F15,31C05, 31C25 ; Secondary 30C55, 30C62, 32G15, 30F15 81T40}
\keywords{Dirichlet problem, quasicircles, quasiconformal extensions, Poincar\'e inequality, Sokhostki-Plemelj jump decomposition, Cauchy integral, Besov spaces, Grunsky operator, Weil-Petersson class}
\title[Dirichlet space of multiply connected domains]{Dirichlet space of multiply connected domains with Weil-Petersson class boundaries}
\begin{document}

\begin{abstract}
 The restricted class of quasicircles sometimes called the ``Weil-Petersson-class'' has been a subject of
 interest in the last decade.  In this paper we establish a Sokhotski-Plemelj jump formula
 for WP-class quasicircles, for boundary data in a certain conformally invariant Besov space.
 We show that this Besov space is precisely the set of traces on the boundary of harmonic functions of finite
 Dirichlet energy on the WP-class quasidisk.

 We apply this result to
 multiply connected domains $\riem$ which are the complement of $n+1$
 WP-class quasidisks.  Namely, we give a bounded isomorphism between the Dirichlet space $\mathcal{D}(\riem)$ of $\riem$ and a
 direct sum of Dirichlet spaces, $\mathcal{D}^-$, of the unit disk.
  Writing the quasidisks as images of the disk under conformal maps
   $(f_0,\ldots,f_n)$, we also show that $\{ (h \circ f_0,\ldots,h \circ f_n) \,: h \in \mathcal{D}(\riem)\}$
 is the graph of a certain bounded Grunsky operator on $\mathcal{D}^-$.
\end{abstract}

\maketitle

\begin{section}{Introduction}
\begin{subsection}{Statement of results and preliminaries} \label{se:results}
In this section we state the necessary definitions and outline the results.  Section
\ref{se:literature} contains a brief discussion of the literature from an analytic
point of view.
Applications and further literature can be
found in Section \ref{se:applications}.

 Let $\sphere$ denote the Riemann sphere, $\disk^+ = \{z : |z|<1 \}$ and
 $\disk^- = \{ z : |z|>1 \} \cup \{\infty\}$.   Denote the circle $|z|=1$ by
 $\mathbb{S}^1$.  Consider an $n+1$-tuple of
 maps $(f_0,\ldots,f_n)$ with the following properties.
 \begin{enumerate}[label=(\alph*)]
  \item $f_i \colon \disk^+ \rightarrow D_i$ are bijective holomorphic
  maps onto open sets $D_i \subseteq \mathbb{C}$, for $i=0,\ldots,n-1$, and $f_n \colon \disk^-
  \rightarrow \sphere$ is a bijective holomorphic map onto $D_n \subseteq \sphere$
   taking $\infty$ to $\infty \in D_n$.
  \item The closures of the images are non-intersecting; that is $\overline{D_i} \cap \overline{D_j}$ is
  empty for all $i \neq j$.
  \item The boundaries $\partial D_i$ are WP-class quasicircles for $i=1,\ldots,n$.
 \end{enumerate}
 For the definition of WP-class quasicircles, see Section \ref{se:rigged_moduli_space_def} ahead.
 Let $\riem = \sphere \backslash \left( \cup_{i=1}^n \overline{D_i} \right)$ denote
 the interior of the complement of the images of the maps.  Denote the $i$th
 boundary curve by $\partial_i \riem = \partial D_i$.  We are concerned with the following problem, which is motivated by conformal field theory and quasiconformal Teichm\"uller theory.

Let $\mathcal{H}(\mathbb{S}^1)$ be the set of functions on the circle $\mathbb{S}^1$ with half-order derivative
in $L^2$.
\begin{description}
 \item[Problem]  Let
 \[  \mathcal{D}(\riem) = \left\{ h \colon \riem \rightarrow \mathbb{C} \, :\, h \text{ holomorphic and }   \iint_{\riem} |h'|^2 <\infty \right\}  \]
 denote the Dirichlet space of $\riem$.
  Consider the map from  $\mathcal{D}(\riem)$ to $\mathcal{H}(\mathbb{S}^1) \oplus \cdots \oplus \mathcal{H}(\mathbb{S}^1)$
  defined by
   \begin{equation} \label{eq:composition_shorthand}
    h  \longmapsto  \left(\left. h \circ f_0 \right|_{\mathbb{S}^1},\ldots, \left. h \circ f_n \right|_{\mathbb{S}^1}\right).
   \end{equation}
 Characterize the image of this map
 analytically and algebraically.
\end{description}

 Of course, one sees immediately that there are several analytic problems
 which need to be resolved in order to make this problem meaningful.
 In this paper, we formulate a natural analytic setting for this problem, and show that the image is the graph of an operator related to the Grunsky matrix.  To do so, we extend the Sokhotski-Plemelj
 jump decomposition to a certain Besov space $\mathcal{H}(\Gamma)$ on any WP-class quasicircle $\Gamma$ (see Definition \ref{defn: besov space norm}
 and equation (\ref{eq:Besov_space_definition}) ahead).  This Besov space is naturally related to
 the Dirichlet space and is conformally invariant in a certain sense. In the case that $\Gamma= \mathbb{S}^1$, it reduces
 to the space of function with square-integrable half-order derivatives.

 We now make these statements precise.
  Let $\Omega$ be a simply-connected domain in $\sphere$
such that $\infty \notin \partial \Omega$.
The Dirichlet space is
\begin{equation} \label{eq:Dirichlet_hol_in_def}
  \mathcal{D}(\Omega) = \left\{ h \colon \Omega \rightarrow \mathbb{C} \,:\,
    h \text{ holomorphic and }  \iint_{\Omega} |h'|^2 \,dA <\infty \right\},
\end{equation}
if $\infty \notin \Omega$, and if $\infty \in \Omega$, then
\begin{equation} \label{eq:Dirichlet_hol_out_def}
 \mathcal{D}(\Omega) = \left\{ h \colon \Omega \rightarrow \mathbb{C} \,:\,
    h \text{ holomorphic, }  h(
    \infty)= 0 \text{ and }  \iint_{\Omega} |h'|^2 \,dA <\infty \right\}.
\end{equation}
We endow $\mathcal{D}(\Omega)$ with the norm
\[  \| h\| = \left( \iint_\Omega |h'|^2 \,dA \right)^{1/2} \]
if $\infty \in \Omega$, and with the norm
\[  \Vert h\Vert = \left( |h(0)|^2 + \iint_{\Omega} |h'|^2 \,dA \right)^{1/2}  \]
if $0 \in \Omega$. (The convention that functions in $\mathcal{D}(\Omega)$
satisfy $h(\infty)=0$ if $\infty$ is in $\Omega$
 is a matter of convenience, which we adopt because functions obtained by a Cauchy integral will have this property.)

The solution of the problem requires the solution of the following analytic results which are of independent interest.

\medskip

\noindent \textbf{Analytic results.}

\begin{enumerate}
\item Let $\Gamma$ be a WP-class quasicircle bounding a WP-class
quasidisk $\Omega$.  A function $h$ on $\Gamma$ is in $\mathcal{H}(\Gamma)$ if and only if $h$ is the trace of a
complex-valued harmonic function $H$ on $\Omega$ with finite Dirichlet energy.  The
restriction and extension operators are bounded.  (Theorem \ref{th:trace_of_Dirichlet}).
\smallskip
\item For bounded WP-class quasidisks $\Omega_1$ and a conformal map $f \colon \Omega_1 \rightarrow \Omega_2$, the composition operator
\begin{align*}
   C_f \colon \mathcal{H}(\partial \Omega_2) & \longrightarrow  \mathcal{H}(\partial \Omega_1)  \\
   h & \longmapsto  h \circ f
\end{align*}
is a bounded linear isomorphism.  (Theorem \ref{th:one-sided_composition}).
\smallskip
\item  For any WP-class quasicircle $\Gamma$ (in particular, for $\Gamma=\partial_i \riem$), there is
   a direct sum decomposition $\mathcal{H}(\Gamma) = \mathcal{D}(\Omega^+) \oplus \mathcal{D}(\Omega^-)$
   where $\Omega^+$ and $\Omega^-$ are the bounded and unbounded components of $\sphere \backslash \Gamma$
   respectively.  The Cauchy projections onto $\mathcal{D}(\Omega^\pm)$ are
   bounded.  (Theorem \ref{th:Cauchy_bounded}).
   \setcounter{tmpc}{\theenumi}
\end{enumerate}

  The decomposition in item (3) is of course given by the Sokhotski-Plemelj jump formula for the
  Cauchy kernel.  Note that result (2) demonstrates that the Besov space $\mathcal{H}(\Gamma)$ is in
  some sense conformally invariant.

  To continue stating the results, we need a bit more notation.  Denote
  \begin{equation} \label{eq:H_many_definition}
   \mathcal{H} =  \mathcal{H}(\mathbb{S}^1) \oplus \cdots \oplus \mathcal{H}(\mathbb{S}^1)
  \end{equation}
  where there are $n+1$ terms in the above sum, one corresponding to each boundary curve.
  and
  \begin{align} \label{eq:Dpm_definition}
   \mathcal{D}^+ & = \mathcal{D}(\disk^+) \oplus \mathcal{D}(\disk^+) \oplus \cdots \oplus \mathcal{D}(\disk^-) \\
   \mathcal{D}^- &= \mathcal{D}(\disk^-) \oplus \mathcal{D}(\disk^-) \oplus \cdots \oplus \mathcal{D}(\disk^+) \nonumber
  \end{align}
   where in both sums all the copies except for the last are identical. The direct sum
   decomposition $\mathcal{H}(\mathbb{S}^1) = \mathcal{D}(\disk^+) \oplus \mathcal{D}(\disk^-)$ \cite{NS} yields the decomposition
  \[  \mathcal{H} = \mathcal{D}^- \oplus \mathcal{D}^+  \]
  and bounded projection operators $\mathcal{P}_\pm \colon \mathcal{H} \rightarrow \mathcal{D}^\pm$.
  Similarly, denote
  \begin{equation}  \label{eq:Hriem_definition}
    \mathcal{H}(\partial \riem) = \mathcal{H}(\partial_0 \riem) \oplus \cdots \oplus \mathcal{H}(\partial_n \riem).
  \end{equation}
  For $f=(f_0,\ldots,f_n)$ satisfying (a), (b) and (c), let
  \begin{align}  \label{eq:C(f)_definition}
   \mathcal{C}_f \colon \mathcal{H}(\partial \riem) & \longrightarrow \mathcal{H} \\ \nonumber
   (h_0,\ldots,h_n) & \longmapsto (h_0 \circ f, \ldots, h_n \circ f)
  \end{align}
  and
  \begin{align}  \label{eq:I(f)_definition}
  \mathbf{I}_f \colon \mathcal{D}^-
    &\longrightarrow \mathcal{D}(\Sigma) \\ \nonumber
   g = (g_0,\ldots,g_n) & \longmapsto  \mathcal{P}(\riem)\,\mathcal{C}_{f^{-1}} (g_0,\ldots,g_n)
  \end{align}
  where $\mathcal{P}(\riem)$ is defined by
  \begin{align} \label{eq:Priem_definition}
   \mathcal{P}(\riem) \colon \mathcal{H}(\partial \riem) & \longrightarrow \mathcal{D}(\riem) \\ \nonumber
   (h_0, \ldots, h_n) & \longmapsto \sum_{i=0}^{n} \frac{1}{2\pi i} \int_{\partial_i \riem}
    \frac{h_i(\zeta)}{\zeta - z} d\zeta
  \end{align}
  and it is understood that the output is a function of $z \in \riem$.
  Finally define $\mathcal{W}_f: \mathcal{D}^- \rightarrow \mathcal{H}$ by
  \begin{equation} \label{eq:Wf_definition}
    \mathcal{W}_f = \mathcal{C}_f \, \mathbf{I}_f.
  \end{equation}
 Note that we have suppressed a trace operator taking elements of $\mathcal{D}(\riem)$ to their boundary values in $\mathcal{H}(\partial \riem)$.

   \medskip

   \noindent \textbf{Geometric results.}

\begin{enumerate}
\setcounter{enumi}{\thetmpc}
    \item  $\mathbf{I}_f \colon \mathcal{D}^- \rightarrow \mathcal{D}(\riem)$ is a bounded linear isomorphism.
    (Theorem \ref{th:I_multi_isomorphism}).
    \smallskip
  \item  $\mathcal{P}_- \, \mathcal{W}_f$ is the identity and  $\mathcal{P}_+ \,
   \mathcal{W}_f$ is a bounded linear isomorphism.  In the standard basis $\{z^n\}$, the blocks of
   $\mathcal{P}_+\, \mathcal{W}_f$  are Grunsky matrices on the diagonal,
   and so-called generalized Grunsky matrices off the diagonal.  (Theorem \ref{th:two_projections_id_Grunsky},
   equations (\ref{eq:Faber_phin}) and (\ref{eq:relation_to_regular_Grunsky})).
   \smallskip
 \item $\mathcal{C}_f \, \mathcal{D}(\riem)$ is the
 graph of $\mathcal{P}_- \mathcal{W}_f$ in the fixed space $\mathcal{H}$.  (Corollary \ref{co:main_question_answered}).
 \end{enumerate}
  Result (4) relates to an isomorphism of Shen \cite{ShenFaber} (see Remark \ref{re:Shen_comparison}
  ahead).  In the notation of this section, the image of the map (\ref{eq:composition_shorthand}) is
  just $\mathcal{C}_f \mathcal{D}(\riem)$.  Thus
  results (5) and (6) answer the main problem and show that $\mathcal{C}_{f} \, \mathcal{D}(\riem)$
   is the graph of a kind of generalized Grunsky operator.
\end{subsection}
\begin{subsection}{Literature} \label{se:literature}
The analysis in the paper involves Sobolev and Dirichlet spaces on quasidisks.
  In particular we require the work of W. Smith, D. Stegenga \cite{SmithStegenga} and A. Stanoyevitch, D. Stegenga \cite{StanoyevitchStegenga} on Poincar\'e inequalities on John domains. Later in the investigation of the regularity of the WP-class quasicircles we enter the realm of geometric measure theory and utilize a result due to K. Falconer and D. Marsh \cite{FM} characterizing biLipschitz equivalence of quasidisks. Furthermore the study of Sobolev and Besov spaces, and in particular the traces of functions on the boundary of so called $s$-regular domains (to which we show the WP-class quasidisks belong), was investigated by A. Jonsson \cite{Jon}. Finally the existence of the solution to the Dirichlet problem on the WP-class quasidisks, with Besov space boundary data, hinges on results of T.-K. Chang, J. Lewis \cite{ChangLewis} and D. Mitrea, M. Mitrea, S. Monniaux \cite{MMM}.

 The study of WP-class quasicircles was initiated by Cui \cite{Cui}
 and Hui \cite{GuoHui}, in connection with finding a theory of the universal
 Teichm\"uller space based on $L^p$ Beltrami differentials.  The memoir \cite{TT}
 of Takhtajan and Teo obtained wide-ranging results on the WP-class universal
 Teichm\"uller space, including for example sewing formulas for the Laplacian and potentials
 for the Weil-Petersson metric.
 This stimulated a great deal of interest in the subject
 (see Shen \cite{Shen_characterization}).
 Further discussion can be found in Section \ref{se:applications}.
\end{subsection}
\end{section}

\begin{section}{Dirichlet problem on WP quasidisks}
\begin{subsection}{ WP quasidisks and non-overlapping maps}
 \label{se:rigged_moduli_space_def}
 In this section, we give the definition of the refined rigged moduli space and
 related function spaces.

 We will be concerned with quasiconfomally extendible conformal maps of a certain special class.
 Denote the set of Beltrami differentials on $\disk^-$ satisfying
 \[  \iint_{\disk^-} \frac{|\mu(z)|^2}{(|z|^2 - 1)^2} \,dA<\infty  \]
 by $L^2_{\mathrm{hyp}}(\disk^-)$ (``hyp'' stands for ``hyperbolic'', since the condition above
 says that $|\mu|$ is square-integrable with respect to the hyperbolic area element
 on $\disk^-$).  With this in mind, we make the following definition, following terminology
 of \cite{Shen}.
 \begin{definition}
  Let $f:\disk^+ \rightarrow \mathbb{C}$ be a one-to-one analytic map.  We say that $f$ is WP-class
  if it has a quasiconformal extension to $\sphere$ whose Beltrami differential $\mu$ on $\disk^-$
  is in $L^2_{\mathrm{hyp}}(\disk^-)$.   We say that a Jordan curve $\Gamma$ in $\mathbb{C}$ is a WP-class
  quasicircle if there is a WP-class map $f$ taking $\disk^+$ onto the bounded complement of $\Gamma$.
 \end{definition}
 We assume that $\infty \notin \Gamma$ for convenience throughout the paper, although the definition
 above can be naturally extended to allow this possibility.  We refer to each of the two complements of
 a WP-class quasicircle as WP-class quasidisks.

 \begin{definition} \label{de:Oqco}
 Let $p \in \mathbb{C}$.  Let $\Oqco(p)$ denote the set of holomorphic functions $f \colon \disk^+ \to \mathbb{C}$ such
 that $f(p)=0$, $f$ is one-to-one and has quasiconformal extension to $\mathbb{C}$ and
 \[  \iint_{\disk^+} \left| \frac{f''(z)}{f' (z)} \right|^2 \,dA <\infty,  \]
where $dA$ denotes the Euclidean area element.
\end{definition}

 The following theorem was proven by Hui \cite{GuoHui} as a special case of a general result for Beltrami differentials in weighted $L^p$ spaces,  although the proof in $L^2$ goes back to Cui \cite{Cui}.
 \begin{theorem}
  Let $f \colon \disk^+ \to \mathbb{C}$ be a conformal map such that $f(0)=0$.  Then $f \in \Oqco(0)$ if and only if $f$ has a quasiconformal extension $\tilde{f}$ to the plane such that the Beltrami differential $\mu$ of the restriction of $\tilde{f}$ to $\disk^-$ is in $L^2_{\mathrm{hyp}}(\disk^-)$.
 \end{theorem}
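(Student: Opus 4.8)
The statement is an equivalence, so there are two implications to establish, and in both it helps to record at the outset that the Euclidean condition $\iint_{\disk^+}|f''/f'|^2\,dA<\infty$ in the definition of $\Oqco(0)$ says precisely that $\log f' \in \mathcal{D}(\disk^+)$, since $(\log f')'=f''/f'$ and the branch of $\log f'$ is well defined because $f$ is univalent. I would also invoke the known comparison, valid on the class of univalent maps admitting a quasiconformal extension, between the Euclidean pre-Schwarzian $b=f''/f'$ and the hyperbolically weighted Schwarzian: that $\iint_{\disk^+}|b|^2\,dA<\infty$ holds if and only if $\iint_{\disk^+}|S(f)|^2(1-|z|^2)^2\,dA<\infty$, where $S(f)=b'-\tfrac12 b^2$ (this rests on the distortion theorems for quasiconformally extendible univalent maps). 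This lets me pass freely between the pre-Schwarzian, which appears in the definition of $\Oqco(0)$, and the Schwarzian, which is the conformally natural object matching the hyperbolic weight on the Beltrami side.

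For the forward implication I assume a quasiconformal extension $\tilde f$ with Beltrami differential $\mu$ on $\disk^-$ satisfying $\iint_{\disk^-}|\mu|^2(|z|^2-1)^{-2}\,dA<\infty$, and I aim to bound the Schwarzian of $f=\tilde f|_{\disk^+}$. The plan is to use the Bers reproducing formula, which expresses $S(f)(z)$ for $z\in\disk^+$ as an integral of $\mu$ over $\disk^-$ against a kernel of order $(w-z)^{-4}$; infinitesimally this is the linear operator $\mu\mapsto -\frac{6}{\pi}\iint_{\disk^-}\mu(w)(w-z)^{-4}\,dA(w)$. The exact nonlinear dependence I would handle either by expanding $\tilde f$ as a Neumann series in the Beurling transform of $\mu$ and summing the resulting estimates, or by first proving the bound for smooth, compactly supported $\mu$ and passing to the limit. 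The crux is then a single weighted norm inequality: the integral operator with kernel of order $(w-z)^{-4}$ maps $L^2_{\mathrm{hyp}}(\disk^-)$ boundedly into the space of holomorphic $\varphi$ on $\disk^+$ with $\iint_{\disk^+}|\varphi|^2(1-|z|^2)^2\,dA<\infty$. I would prove this by a Schur test with hyperbolic test functions, reducing it to Forelli--Rudin type estimates for the Bergman kernel across $\mathbb{S}^1$; the weights $(|w|^2-1)^{-2}$ and $(1-|z|^2)^2$ are exactly critical for boundedness to hold.

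For the reverse implication I start from $\log f'\in\mathcal{D}(\disk^+)$, equivalently $S(f)$ in the weighted Bergman space above, and must produce a quasiconformal extension whose Beltrami differential lies in $L^2_{\mathrm{hyp}}(\disk^-)$. When the hyperbolic sup-norm $\sup_{\disk^+}(1-|z|^2)^2|S(f)(z)|$ is small I would use the explicit Ahlfors--Weill reflection, whose Beltrami coefficient on $\disk^-$ is given by a formula in which $S(f)$ appears reflected across $\mathbb{S}^1$ via $z\mapsto 1/\bar z$ and multiplied by factors of order $(1-|1/\bar z|^2)^2$; the change of variables $z\mapsto 1/\bar z$ then converts the weighted Bergman bound on $S(f)$ over $\disk^+$ into the desired bound $\iint_{\disk^-}|\mu|^2(|z|^2-1)^{-2}\,dA<\infty$. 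To remove the smallness hypothesis I would globalize: the linearized operator of the previous paragraph is a bounded isomorphism onto the relevant space, so the correspondence $\mu\mapsto S(f)$ is a local diffeomorphism near every point, and I would combine this with the group structure of the class — decomposing an arbitrary $f$ into a finite composition of maps with small Schwarzian, or running an openness--closedness argument along a path — to extend the estimate to all of $\Oqco(0)$.

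The main obstacle I expect is twofold and concentrated near the unit circle. First, the singular-integral estimate sits at the borderline of boundedness: the kernel of order $(w-z)^{-4}$ is genuinely singular as $w,z\to\mathbb{S}^1$ from opposite sides, and only the exact hyperbolic weights render the operator bounded, so the Schur test must be arranged with care. Second, passing from the infinitesimal, linearized equivalence to the exact, global statement requires controlling the nonlinear dependence of $\mu$, and hence of $S(f)$, on $f$ — whether by summing the Neumann series in the weighted norm or by executing the globalization argument — and this is where the bulk of the analytic work lies.
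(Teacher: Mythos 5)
You should first note that the paper itself does not prove this theorem: it is quoted from Hui \cite{GuoHui}, with the $L^2$ case going back to Cui \cite{Cui}, so the only meaningful comparison is with those proofs. Your outline does reproduce their broad architecture --- the pre-Schwarzian/Schwarzian equivalence for quasiconformally extendible maps, the kernel of order $(w-z)^{-4}$ in the forward direction, Ahlfors--Weill in the converse --- but in both directions the step you defer is precisely where the theorem lives, and what you propose in its place has genuine gaps.

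In the forward direction, the linearized estimate (boundedness of $\mu \mapsto -\tfrac{6}{\pi}\iint_{\disk^-}\mu(w)(w-z)^{-4}\,dA(w)$ from $L^2_{\mathrm{hyp}}(\disk^-)$ into the weighted Bergman space on $\disk^+$) is indeed standard, but handling the exact dependence ``by a Neumann series in the Beurling transform'' is not a workable plan: the Schwarzian is a rational expression in $\partial \tilde f$ and its derivatives, and term-by-term weighted $L^2$ bounds with summable constants are not what the Neumann series gives you. The proofs of Cui and Takhtajan--Teo instead use an exact (non-linearized) Bers-type representation of $S(f^{\mu})$ on $\disk^+$, in which the flat kernel $(w-z)^{-4}$ is replaced by its transport under $f^{\mu}$ itself, i.e.\ a kernel built from $(f^{\mu}(w)-f^{\mu}(z))^{-4}$ together with the corresponding derivative factors; quasiconformal distortion estimates then reduce this kernel to the flat one, and that reduction is the bulk of the work. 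In the converse direction, Ahlfors--Weill does settle the small-norm case exactly as you describe, but both of your globalization schemes --- factoring $f$ into finitely many maps of small Schwarzian norm, or an openness--closedness argument along a path --- tacitly assume that the property of admitting an extension with $L^2_{\mathrm{hyp}}$ Beltrami coefficient is preserved under composition (equivalently, under right translation in Teichm\"uller space). That translation invariance is itself one of the principal theorems of Takhtajan--Teo, of depth comparable to the statement you are proving, so as written the argument is circular, or at best rests on an unproved lemma as hard as the theorem. The proofs in the literature avoid this entirely by an idea absent from your proposal: the Douady--Earle (barycentric) extension of the boundary values, whose conformal naturality permits a global pointwise estimate of its Beltrami coefficient in terms of the data, yielding the $L^2_{\mathrm{hyp}}$ bound with no smallness assumption and no appeal to the group structure.
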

 Of course, the result immediately extends to $\Oqco(p)$ for any $p \in \mathbb{C}$ since $f''/f'$ is invariant under
 the addition of a constant to $f$.
 Thus WP-class quasicircle $\Gamma$ are characterized by the property that for any $p$ in the bounded component of $\mathbb{C} \backslash
 \Gamma$, $\Gamma$
 is the boundary of $f(\disk^+)$ for some $f \in \Oqc(p)$.  We summarize this as follows, together with two more characterizations.
 \begin{theorem}  Let $\Gamma$ be a Jordan curve in the plane.  The following are equivalent.
  \begin{enumerate}
   \item $\Gamma$ is a WP-class quasicircle.
   \item  For any $p$ in the bounded component $\Omega^+$ of $\sphere \backslash \Gamma$, $\Omega^+$ is the image of some $f \in \Oqco(p)$.
   \item The unbounded component $\Omega^-$ of $\sphere \backslash \Gamma$ is the image of some one-to-one map $g \colon \disk^- \rightarrow \sphere$
    which is analytic except for a simple pole at $\infty$, such that $g''/g'$ is in $L^2(\disk^-)$.
   \item The unbounded component $\Omega^-$ of $\sphere \backslash \Gamma$ is the image of some one-to-one map $g \colon \disk^- \rightarrow \sphere$
    which is analytic except for a simple pole at $\infty$, such that $\tilde{g} \in \Oqco(0)$ where $\tilde{g}(z)=1/g(1/z)$.
  \end{enumerate}
 \end{theorem}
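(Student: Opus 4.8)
The plan is to treat the two ``interior'' characterizations (1) and (2) as essentially already recorded in the remark following the previous theorem, and to reduce the two ``exterior'' characterizations (3) and (4) to that interior setting by means of the inversion $\iota(z)=1/z$. Recall that by the Cui--Hui theorem a conformal map $f\colon\disk^+\to\mathbb{C}$ lies in $\Oqco(p)$ precisely when it admits a quasiconformal extension whose restriction to $\disk^-$ has Beltrami differential in $L^2_{\mathrm{hyp}}(\disk^-)$, and this is exactly the defining property of a WP-class map onto $\Omega^+$; the quantifier ``for any $p$'' in (2) is handled by precomposing a fixed WP-class map with an automorphism of $\disk^+$ (a hyperbolic isometry of $\disk^-$, under which the $L^2_{\mathrm{hyp}}$ condition is invariant) and postcomposing with a translation (under which $f''/f'$ is unchanged). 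After a harmless translation I may also assume $0\in\Omega^+$, so that the inversions below behave well.

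First I would prove (2)$\Leftrightarrow$(4). For any $g$ as in (4) write $\tilde g=\iota\circ g\circ\iota$, so that $\tilde g(z)=1/g(1/z)$ maps $\disk^+$ onto $\iota(\Omega^-)$; since $\infty\in\Omega^-$ and $0\in\Omega^+$, the set $\iota(\Omega^-)$ is exactly the bounded component of $\sphere\setminus\iota(\Gamma)$ and $\tilde g(0)=1/g(\infty)=0$. The correspondence $g\mapsto\tilde g$ is a bijection between the exterior maps with a simple pole at $\infty$ and the conformal maps $\disk^+\to\iota(\Omega^-)$ fixing $0$, so the existential statement (4) says precisely that the bounded component of $\sphere\setminus\iota(\Gamma)$ is the image of an $\Oqco(0)$ map, which by (1)$\Leftrightarrow$(2) applied to $\iota(\Gamma)$ is equivalent to $\iota(\Gamma)$ being a WP-class quasicircle. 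Finally, because $\iota$ is a M\"obius transformation and hence holomorphic on $\sphere$, postcomposition by $\iota$ leaves the Beltrami differential of a quasiconformal extension unchanged; thus $\iota(\Gamma)$ is WP-class if and only if $\Gamma$ is, which is (2). (Here $\iota(\Gamma)$ avoids $\infty$ because $0\notin\Gamma$.)

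Next I would establish (3)$\Leftrightarrow$(4) by computing the pre-Schwarzian under the inversion. The chain rule gives
\[
 \frac{\tilde g''(z)}{\tilde g'(z)}
 = -\frac{1}{z^{2}}\,\frac{g''(1/z)}{g'(1/z)}-\frac{2}{z}+\frac{2}{z^{2}}\,\frac{g'(1/z)}{g(1/z)},
\]
and I would compare $\iint_{\disk^+}|\tilde g''/\tilde g'|^2\,dA$ with $\iint_{\disk^-}|g''/g'|^2\,dA$ via the substitution $z=1/u$, whose Jacobian $|u|^{-4}$ is bounded away from the origin and so transfers finiteness of one integral to the other there. The only delicate point is a neighbourhood of $z=0$, i.e.\ of the pole $u=\infty$. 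Inserting the expansion $g(u)=\alpha u+\beta+O(1/u)$ forced by the simple pole (with $\alpha\neq0$ by injectivity) yields $g''(1/z)/g'(1/z)=O(z^{3})$ and $g'(1/z)/g(1/z)=z+O(z^{2})$, so the apparently singular $2/z$ contributions cancel and $\tilde g''/\tilde g'$ extends boundedly across $z=0$. This gives $\iint_{\disk^-}|g''/g'|^2\,dA<\infty$ if and only if $\iint_{\disk^+}|\tilde g''/\tilde g'|^2\,dA<\infty$; combined with the (nontrivial) implication that for a conformal map finiteness of the $L^2$ pre-Schwarzian integral already forces the quasiconformal extension, hence membership in $\Oqco(0)$, we obtain (3)$\Leftrightarrow$(4), the remaining requirements in $\Oqco(0)$ (injectivity of $\tilde g$ and $\tilde g(0)=0$) being transparent from the formula.

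The main obstacle I anticipate is precisely the cancellation near the pole in the last step: each of the three terms above has a non-integrable singularity at $z=0$ on its own, and only their combination is square-integrable, so the Laurent expansion of $g$ at $\infty$ must be carried to sufficient order with uniformly controlled error. A secondary point requiring care is the passage, in the direction (3)$\Rightarrow$(4), from a finite $L^2$ pre-Schwarzian integral to an actual quasiconformal extension; this is the substantive half of the Cui--Hui theory, which I would invoke rather than reprove. The remaining bookkeeping---the reduction to $0\in\Omega^+$, the M\"obius invariance of the WP-class, and the base-point normalizations in (2)---is routine.
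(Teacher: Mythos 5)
Your treatment of (1)$\Leftrightarrow$(2) agrees with the paper's (both rest on the Cui--Hui theorem plus the base-point normalization), and your pre-Schwarzian computation relating (3) and (4) is correct: the formula for $\tilde g''/\tilde g'$ is right, the $2/z$ singularities do cancel against the expansion $g'(1/z)/g(1/z)=z+O(z^2)$, and the discrepancy term $2/u-2g'(u)/g(u)$ is square-integrable on $\disk^-$ (near the unit circle one also needs $g'/g\in L^2$ there, which follows from $0\notin\Omega^-$ together with the finite area of $g(\{1<|u|<2\})$ --- worth saying explicitly). The genuine gap is the step that is supposed to connect the interior conditions (1),(2) to the exterior conditions (3),(4): your claim that ``$\iota(\Gamma)$ is WP-class if and only if $\Gamma$ is, because postcomposition by $\iota$ leaves Beltrami differentials unchanged.'' A WP-class quasicircle is, by the paper's definition, the boundary of the image of a WP-class map of $\disk^+$ onto the \emph{bounded} complementary component. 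If $f\colon\disk^+\to\Omega^+$ witnesses this for $\Gamma$, then $\iota\circ f$ maps $\disk^+$ onto $\iota(\Omega^+)$, which is the \emph{unbounded} component of $\sphere\setminus\iota(\Gamma)$ (it contains $\infty$, so $\iota\circ f$ is not even a map into $\mathbb{C}$). Thus M\"obius invariance of Beltrami coefficients only converts the interior condition for $\Gamma$ into an exterior-type condition for $\iota(\Gamma)$; it gives no information about the interior condition for $\iota(\Gamma)$, which is what your argument requires. Worse, by your own bijection $g\leftrightarrow\tilde g$, the statement ``$\iota(\Gamma)$ is WP-class'' \emph{is} condition (4) for $\Gamma$, so the invariance you assert is precisely the equivalence (2)$\Leftrightarrow$(4) being proved: the argument is circular, and the hard step --- deducing a property of the conformal map onto $\Omega^-$ from a hypothesis about the entirely different conformal map onto $\Omega^+$ --- is never performed.

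That crossing is exactly what the paper's proof delegates to Takhtajan--Teo (Theorem 1.12, Chapter II of \cite{TT}), invoked together with the invariance of the \emph{Schwarzian} derivative under $f\mapsto 1/f(1/z)$. The point is that TT's theorem ties the $L^2_{\mathrm{hyp}}$ Beltrami condition to a weighted-$L^2$ condition on the Schwarzian, and the Schwarzian --- unlike the pre-Schwarzian you compute with --- transforms M\"obius-invariantly, so the characterization can be transported across the curve. Your inversion computation is a nice elementary replacement for that invariance insofar as it relates (3) and (4), which concern the \emph{same} exterior map $g$; but it cannot substitute for the Beltrami--Schwarzian equivalence that links the two sides of $\Gamma$. (A secondary point: the implication ``univalent with finite $L^2$ pre-Schwarzian $\Rightarrow$ quasiconformally extendible,'' which you rightly flag and must invoke because (3) assumes no extendibility, comes from this same circle of results; citing it is legitimate, but it does not repair the gap above.)
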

 This follows from the above discussion and \cite[Theorem 1.12, Chapter II]{TT}, using the invariance of the Schwarzian under the transformation
 $f \mapsto 1/f(1/z)$.

 We will be concerned with collections $n$-tuples of such maps, whose images do not overlap in a certain sense.
 \begin{definition}{\label{de:rigged_normalized}}
  Let $n\geq 2$. We say that $f =(f_0,\ldots,f_n)$ is an \n-rigging if
  $f_i \colon \disk^+ \rightarrow \mathbb{C}$, $i=0,\ldots,n$, $f_n \colon \disk^- \rightarrow \sphere$ and
  the following conditions are satisfied.
  \begin{enumerate}
   \item $f_0 \in \Oqco(0)$, $f_i \in \Oqco(p_i)$ for $i=1,\ldots,n$, and $1/f_n(1/z) \in \Oqco(0)$.
   \item The closures of the images of $f_i$, $i=0,\ldots,n$, are pair-wise disjoint.
  \end{enumerate}
  Denote the set of \n-riggings by $\Rig(n)$.
 \end{definition}
 Note that condition (2) implies that $p_i \neq 0$ for all $i=1,\ldots,n$ and $p_i \neq p_j$
 whenever $i \neq j$.  We will refer to \n-riggings as riggings if there is no need to specify $n$.

 Given $f \in \Rig(n)$ let
 \[  \riem = \sphere \backslash ( \overline{f_0(\disk^+)} \cup \cdots \cup \overline{f_n(\disk^-)}).  \]
 It should be understood throughout the paper that $\riem$ depends on $f$.  Let $D_i = f_i(\disk^+)$ for $i=0,\ldots,n-1$
 and $D_i=f_i(\disk^-)$ for $i=n$. We call $D_i$ the \ith cap
 and $\partial_i \riem = \partial D_i$ the \ith boundary curve.
 Finally, let $\Omega_i = \sphere \backslash D_i$.

\begin{remark} \label{re:Shen_solution_and_extension_to_RS} A WP-class quasisymmetry is a quasisymmetric mapping $\phi:\mathbb{S}^1 \rightarrow \mathbb{S}^1$
such that the corresponding conformal welding pair of conformal maps $f:\disk^+ \rightarrow \mathbb{C}$
 and $g: \disk^- \rightarrow \sphere$ such that $g^{-1} \circ f =\phi$ satisfies $f \in \Oqco(0)$ \cite{GuoHui, TT} .
 The problem of finding a direct characterization was posed by Takhtajan and Teo \cite{TT}.  It was solved recently by
 Shen \cite{Shen_characterization}, who showed that $\phi$ is WP-class if and only if $\phi$ is absolutely continuous
 and $\log{\phi'}$ is in $H^{\frac{1}{2}}(\mathbb{S}^1)$.

 It is not difficult to extend the notion of WP-class quasisymmetry to maps from $\mathbb{S}^1$ to a bordered
 Riemann surface \cite{RSS_Hilbert}, and similarly to extend the notion of $\Oqco(p)$ to a compact Riemann
 surface with distinguished points.  Sewing on disks using WP-class quasisymmetries results of course in
 a compact Riemann surface with non-overlapping maps in $\Oqco(p)$.  In genus zero, we obtain precisely the
 set $\mathcal{R}$. Thus, the situation considered in this paper
 is the most general one for Riemann surfaces with WP-class boundary parameterizations.
 Details can be found in \cite{RSS_Hilbert}.
\end{remark}

 \begin{remark}  The collection $\Rig$ of riggings is closely related to the so-called rigged moduli space of
  punctured Riemann surfaces of genus zero, which appears in conformal field theory.
   That is, we consider the set of compact Riemann surfaces $\riem^P$
   of genus $g$ with $n+1$ punctures $p_0,\ldots,p_n$.  The rigged moduli space $\Mgn$ is the set of equivalence classes
   \[  \Mgn = \{ (\riem,\psi) \}/\sim \]
   where
   \begin{enumerate}
   \item $\psi=(\psi_0,\ldots,\psi_n)$ where $\psi_i:\disk^+ \rightarrow \riem^P$ satisfy $\psi_i(0)=p_i$ for
    $i=0,\ldots,n$  ,
   \item there is a local coordinate $\zeta_i$ of $p_i$  containing the closure of the image of $\psi_i$ such that $\zeta_i \circ \psi_i \in \Oqco(0)$ and $\zeta_i(p_i)=0$  ,
   \item the closures of the images of $\psi_i$ are pair-wise disjoint, and
   \item two pairs $(\riem,\psi)$ and $(\riem',\psi')$ are said to be equivalent if there is a biholomorphism
   $\sigma:\riem \rightarrow \riem'$ such that $\psi'_i=\sigma \circ \psi_i$ for $i=0,\ldots,n$.
   \end{enumerate}

   By the uniformization theorem, every element of $\Mon$ can be represented by an element of $\Rig(n)$,
   since we can assume $\riem^P=\sphere$, $p_0=0$, $p_n=\infty$ and set $f_i=\psi_i$ for $i=0,\ldots,n-1$ and $f_n=\psi_n(1/z)$.
   If we furthermore impose the normalizations $f_n'(\infty)=1$ in the case $n=2$, and $f_1(0)=1$ in the case $n \geq 3$
   on elements of $\Rig$, then we obtain a set of riggings in one-to-one correspondence with $\Mon$.
 \end{remark}
 \begin{remark}
   We have shown that $\Mgn$  is a complex Hilbert manifold in \cite{RSS_Hilbert} closely related to a refined
   Teichm\"uller space of bordered Riemann surfaces of genus $g$ with $n$ boundary curves
   homeomorphic to $S^1$.  If the condition that $\psi_i$ are in $\Oqco$ is weakened to
   quasiconformal extendibility, then the resulting rigged moduli space
   is a Banach manifold \cite{RadnellSchippers_monster}.  Furthermore it is the quotient of the
   Teichm\"uller space of bordered Riemann surfaces modulo a discrete group.
 \end{remark}
\end{subsection}
\begin{subsection}{Function spaces associated with the Dirichlet problem}
 Recall the definition of the holomorphic Dirichlet spaces given in Section \ref{se:results}.  We will also require
 the space of harmonic functions of finite Dirichlet energy on a quasidisk, and the characterization of their boundary values.

In this section, $\Omega$ is always a quasidisk containing $0$ such that $\overline{\Omega}$ does not contain
 $\infty$.  We define the harmonic Dirichlet space
\begin{equation}
 \mathcal{D}_{\mathrm{harm}}(\Omega)  = \left\{ h \colon \Omega \rightarrow \mathbb{C} \,:\,h \ \mbox{harmonic and} \ \iint_{\Omega} | h'|^2 \,dA + \iint_{\Omega} |\overline{h}'|^2 \,dA <\infty \right\}.  \
 \end{equation}

We endow $\mathcal{D}_{\mathrm{harm}}(\Omega)$ with a norm
\begin{equation}\label{defn:norm of harmonic dirichlet}
\| h \|_{\mathcal{D}_{\mathrm{harm}}(\Omega)} = \left\{ |h(0)|^2 + \iint_{\Omega} | h'|^2 \,dA + \iint_{\Omega} |\overline{h}'|^2  \,dA\right\}^{\frac{1}{2}}.
\end{equation}

 Next we recall the definition of Sobolev spaces on open connected subsets of $\mathbb{R}^2$.
\begin{definition}
Let $\Omega$ be an open connected domain in the plane. Denote by $H^{1}(\Omega)$ the Sobolev space of functions in $L^{2}(\Omega)$ with
\begin{equation}\label{defn:norm of sobolev}
\Vert h\Vert_{H^1(\Omega)}:=\{\Vert h\Vert^2_{L^2(\Omega)}+ \Vert h'\Vert^2_{L^2(\Omega)}+\Vert \overline{h}' \Vert^2_{L^2(\Omega)}\}^{\frac{1}{2}}<\infty,
\end{equation}
 where the derivations are in the sense of distributions.
\end{definition}

\begin{theorem}\label{thm: equivalence of the dirichlet and sobolev norms}
Let $\Omega$ be a quasidisk containing $0$. Then for $h\in \mathcal{D}_{\mathrm{harm}}(\Omega)$ one has
\begin{equation}\label{equivalence for dirichletsobolev}
C'\Vert h\Vert_{\mathcal{D}_{\mathrm{harm}}}\leq  \Vert h\Vert_{H^1(\Omega)}\leq C \Vert h\Vert_{\mathcal{D}_{\mathrm{harm}}} .
\end{equation}

\end{theorem}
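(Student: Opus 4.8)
The plan is to observe that the two norms differ only in their lowest-order term, and then to reduce the theorem to a Poincar\'e-type inequality. First I would record that for a complex-valued harmonic $h$ the quadratic gradient terms are identical in both norms: writing $h' = \partial_z h$ and $\overline h' = \partial_{\bar z} h$, one has $|h'|^2 + |\overline h'|^2 = \tfrac12|\nabla h|^2$, so that
\[
  E(h) := \iint_\Omega |h'|^2\,dA + \iint_\Omega |\overline h'|^2\,dA
\]
appears identically in $\|h\|_{\mathcal{D}_{\mathrm{harm}}}^2 = |h(0)|^2 + E(h)$ and in $\|h\|_{H^1(\Omega)}^2 = \|h\|_{L^2(\Omega)}^2 + E(h)$. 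Hence the theorem reduces to the two scalar comparisons $|h(0)|^2 \lesssim \|h\|_{L^2(\Omega)}^2$ (which gives the left inequality) and $\|h\|_{L^2(\Omega)}^2 \lesssim |h(0)|^2 + E(h)$ (which gives the right inequality), where throughout we use that $\Omega$ is bounded, since $\overline{\Omega}$ avoids $\infty$.

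The left-hand inequality is the easy direction and uses only harmonicity near the interior point $0$. Choosing $r>0$ with $\overline{B(0,r)} \subset \Omega$, the solid mean value property applied to the real and imaginary parts of $h$ gives $h(0) = (\pi r^2)^{-1}\iint_{B(0,r)} h\,dA$, whence by Cauchy--Schwarz $|h(0)| \le (\pi r^2)^{-1/2}\|h\|_{L^2(B(0,r))} \le C\|h\|_{L^2(\Omega)}$. This immediately yields $\|h\|_{\mathcal{D}_{\mathrm{harm}}} \le C\|h\|_{H^1(\Omega)}$.

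The right-hand inequality is where the geometry enters, and I expect this to be the main obstacle. The key input is a Poincar\'e inequality on $\Omega$: since a bounded quasidisk is a John domain (indeed a uniform domain), it is a Poincar\'e domain, so by \cite{SmithStegenga, StanoyevitchStegenga} there is a constant with $\|h - h_\Omega\|_{L^2(\Omega)} \le C\,\|\nabla h\|_{L^2(\Omega)} \le C' E(h)^{1/2}$, where $h_\Omega = |\Omega|^{-1}\iint_\Omega h\,dA$. It then remains to replace $h_\Omega$ by the point value $h(0)$: using again $h(0) = |B|^{-1}\iint_B h\,dA$ on the fixed ball $B = B(0,r)$, one estimates
\[
  |h_\Omega - h(0)| = \Bigl| |B|^{-1}\!\iint_B (h_\Omega - h)\,dA \Bigr| \le |B|^{-1/2}\|h - h_\Omega\|_{L^2(\Omega)} \le C'' E(h)^{1/2}.
\]
Combining with $\|h\|_{L^2(\Omega)} \le \|h - h_\Omega\|_{L^2(\Omega)} + |\Omega|^{1/2}|h_\Omega|$ and $|h_\Omega| \le |h(0)| + |h_\Omega - h(0)|$ gives $\|h\|_{L^2(\Omega)}^2 \lesssim |h(0)|^2 + E(h)$, and therefore $\|h\|_{H^1(\Omega)} \le C\|h\|_{\mathcal{D}_{\mathrm{harm}}}$.

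Two technical points would need care. First, to apply the Poincar\'e inequality and to define $h_\Omega$ one must know a priori that $h \in L^1(\Omega)$; since $h$ is harmonic it is smooth in the interior, and I would justify global integrability by exhausting $\Omega$ by compactly contained subdomains and passing to the limit in the Poincar\'e estimate, whose constant can be taken uniform. Second, and most importantly, the crux is the assertion that the quasidisk $\Omega$ supports the $(2,2)$-Poincar\'e inequality; establishing the John (or uniform) domain property of quasidisks and invoking the Smith--Stegenga theory is the real content, whereas the passage from the mean to the value $h(0)$ and the identification of the gradient terms are elementary.
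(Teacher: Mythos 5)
Your proof is correct, and its overall skeleton (reduce to comparing $\Vert h\Vert_{L^2(\Omega)}$ with $|h(0)|^2$ plus the energy; handle the easy direction by the solid mean value property; handle the hard direction by a Poincar\'e inequality on a John domain) matches the paper's. The one genuine difference is the form of the Poincar\'e inequality used for the hard direction. The paper exploits simple connectivity to write $h = F + \overline{G}$ with $F,G$ holomorphic, and applies the \emph{analytic} Poincar\'e inequality $\Vert F - F(0)\Vert_{L^2(\Omega)} \le C\,\Vert F'\Vert_{L^2(\Omega)}$ of \cite{SmithStegenga, StanoyevitchStegenga} to each holomorphic part; since that inequality is anchored at a point value rather than a mean, the estimate against $|h(0)|$ comes out directly, with no mean-to-point step. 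You instead apply the \emph{Sobolev} $(2,2)$-Poincar\'e inequality directly to the harmonic $h$, and then must convert the mean $h_\Omega$ into the point value $h(0)$, which you do correctly via the mean value property on a fixed ball; you also correctly flag the a priori integrability issue needed to even state the Poincar\'e inequality for $h$, which your exhaustion argument (or the usual chaining proofs, which only assume $h \in L^1_{\mathrm{loc}}$ with gradient in $L^2$) handles. The paper's route gets integrability for free from the statement of the analytic inequality and avoids the mean-to-point step; your route avoids the decomposition $h = F + \overline{G}$ and would apply verbatim to any Sobolev function for which the mean can be compared to a distinguished value. Note that \cite{StanoyevitchStegenga} is precisely about the equivalence of these two forms of the inequality on planar domains, so the two arguments are, in the end, two faces of the same fact.
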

\begin{proof}
To show that $\Vert h\Vert_{H^1(\Omega)}\leq C \Vert h\Vert_{\mathcal{D}_{\mathrm{harm}}},$ using \eqref{defn:norm of harmonic dirichlet} and \eqref{defn:norm of sobolev}, it would be enough to show that
$\Vert h\Vert^2_{L^2(\Omega)}\leq C (|h(0)|^2+ \Vert h'\Vert^2_{L^2(\Omega)}+\Vert \overline{h}' \Vert^2_{L^2(\Omega)}),$ for $h\in \mathcal{D}_{\mathrm{harm}}(\Omega).$

Now it is well-known, see for example \cite{SmithStegenga} and \cite{StanoyevitchStegenga}, that for any quasidisk $\Omega$ (which is a so called {\it John domain}), for any arbitrary $z_0$ in $\Omega$, and for $F$ holomorphic in $\Omega$, one has the analytic Poincar\'e inequality

\begin{equation}\label{bound on the L2 norm:1}
 \Vert F-F(z_0)\Vert_{L^2(\Omega)}\leq C(z_0)  \Vert F'\Vert_{L^2(\Omega)}.
\end{equation}

Now since $\Omega$ is simply connected, any $h\in \mathcal{D}_{\mathrm{harm}}(\Omega)$ can be represented as $h(z)= F(z)+\overline{G(z)}$, where $F$ and $G$ are holomorphic functions in $\Omega$. Therefore, \eqref{bound on the L2 norm:1} yields

\begin{equation} \label{Poincare inequality}
\begin{aligned}
\Vert h\Vert^2_{L^2(\Omega)} & \leq 2\Vert h-h(0)\Vert^2_{L^2(\Omega)}+ 2|\Omega||h(0)|^2  \\
& \leq 2 \Vert F-F(0)\Vert^2_{L^2(\Omega)}+2\Vert G-G(0)\Vert^2_{L^2(\Omega)}+ 2|\Omega||h(0)|^2 \\
& \leq 2|\Omega||h(0)|^2+C_1(\Vert F'\Vert^2_{L^2(\Omega)}+\Vert G'\Vert^2_{L^2(\Omega)}) \\
& \leq C_2(|h(0)|^2+\Vert h'\Vert^2_{L^2(\Omega)}+ \Vert \overline{h}'\Vert^2_{L^2(\Omega)}),
\end{aligned}
\end{equation}
where we have also used the holomorphicity of $F$ and $G$ which yields that $F'(z)=h'(z)$ and $G'(z)=\overline{h}'$. This concludes the proof of the second estimate in \eqref{equivalence for dirichletsobolev}.

In order to show $C'\Vert h\Vert_{\mathcal{D}_{\mathrm{harm}}}\leq  \Vert h\Vert_{H^1(\Omega)}$, it is enough to show that $|h(0)|^2 \leq C \Vert h\Vert^2_{L^2(\Omega)}$. To this end, we observe that since $0\in\Omega$ and $h$ is harmonic in $\Omega$, there is an $r>0$ such that $\mathbb{D}(0,r)\subset \Omega$ and by the mean-value theorem for harmonic functions one has

\begin{equation}
  |h(0)|\leq \frac{1}{\pi r^2} \iint_{\mathbb{D}(0,r)}|h(z)|\, dA(z)\leq \frac{|\Omega|}{\pi r^2} \iint_{\Omega}|h(z)|\, \frac{dA(z)}{|\Omega|}.
\end{equation}

Now Jensen's inequality yields that

\begin{equation}
|h(0)|^2\leq \frac{|\Omega|}{\pi^2 r^4} \iint_{\Omega}|h(z)|^2\, dA(z)
\end{equation}
which gives us the desired estimate. This ends the proof of the theorem.
\end{proof}

We will need the appropriate function spaces of boundary values corresponding to the
Dirichlet problem.  In the case of a WP-class quasidisks, the most natural space
is a certain Besov space, which we will define shortly.
\begin{definition}
Let $E$ be a compact subset of $\mathbb{R}^2$ and $0 < s \leq 2$. We say that $E$ is \s-regular if it is bounded and if there is a constant $C_E$ such that

\begin{equation}\label{defn:s-regular set}
\frac{1}{C_E }r^s \leq  \mathscr{H}^s (B(x, r)\cap E) \leq C_E r^s
\end{equation}
for all $x\in E,$ $0 < r \leq  \mathrm{diam}(E)$, where $\mathscr{H}^s$ denotes the \s-dimensional Hausdorff measure.
\end{definition}

From this, it also follows that the Hausdorff dimension $d_{\mathrm{H}}$ of an \s-regular set is equal to $s$.
Now, assume that $\Omega$ is a quasidisk in $\mathbb{R}^2$. It is known, by a result of S. Rohde \cite{Rohde} that $\partial \Omega$, i.e., the quasicircle, is bi-Lipschitz equivalent to a snowflake curve. Now since these latter curves are \s-regular for some $s$ and the \s-regularity is known to be invariant under bi-Lipschitz maps, it follows that $\partial\Omega$ is also \s-regular. Furthermore, a result of F. Gehring and J. V\"ais\"al\"a \cite{GV} concerning $d_{\mathrm{H}}(\partial \Omega)$ shows that $s\in [1,2)$.

\begin{definition}
Let $E\subset \mathbb{R}^2$ be an \s-regular set, $0 < s\leq 2$. The Besov space $B^{s/2}_{2,2} (E)$
consists of all $u\in L^2(E)$ for which
\[\int_{E}\int_{E}\frac{|u(x)-u(y)|^2}{|x-y|^{2s}}\, d \mathscr{H}^s(x)\,d \mathscr{H}^s(y) < \infty.\]
The norm of this Besov space is defined by
\begin{equation}\label{defn: besov space norm}
\Vert u\Vert_{B^{s/2}_{2,2} (E)}= \Vert u\Vert_{L^2(E)}+ \left \{\int_{E}\int_{E}\frac{|u(x)-u(y)|^2}{|x-y|^{2s}}\, d \mathscr{H}^s(x)\,d \mathscr{H}^s(y)\right\}^{1/2}.
\end{equation}
\end{definition}

It was shown by A. Jonsson \cite{Jon}, and A. Jonsson and H. Wallin \cite{JW} that if $\Omega$ is a domain in $\mathbb{R}^2$ whose boundary $\partial \Omega$ is an \s-regular set, then the elements of $H^1(\Omega)$ have a well-defined trace or restriction to $\partial \Omega.$ More precisely, given $\Omega \subset \mathbb{R}^2$ whose boundary $\partial \Omega$ is an \s-regular set with $1\leq s <2$, and given $f\in H^1(\Omega),$ its trace $f|_{\partial \Omega}$ exists as a function in the Besov space $B^{s/2}_{2,2} (\partial \Omega),$ moreover for some $C>0$ one has
$\Vert f|_{\partial \Omega}\Vert_{B^{s/2}_{2,2} (\partial \Omega)} \leq C \Vert f\Vert_{H^1(\Omega)}.$ Conversely, every function $f\in B^{s/2}_{2,2} (\partial \Omega)$ (with $1\leq s <2$) can be extended to a function $F \in H^{1}(\Omega)$ in such a way that $F$ depends continuously on the boundary data.

In this paper we will be particularly concerned with WP-class quasicircles which we later show are \1-regular, see Theorem \ref{thm:regularity of WP-class quasicircles}. Thus we single out the Besov space $B^{1/2}_{2,2}(\partial \Omega)$
and define
\begin{equation} \label{eq:Besov_space_definition}
 \mathcal{H}(\partial \Omega)=B^{1/2}_{2,2}(\partial \Omega).
\end{equation}
Note that in the case that $\partial \Omega = \mathbb{S}^1$, this reduces to the space $\mathcal{H}(\mathbb{S}^1)$
of functions with square-integrable half-order derivatives, see e.g. \cite{JW}.  Thus the notation is consistent with that of the introduction.
From this and the discussion above on the regularity of the quasicircles, we can immediately conclude

\begin{proposition}\label{prop: traces on quasidisks}
Given a WP-class quasidisk  $\Omega$ and $u\in H^1(\Omega)$, one has $\Vert u|_{\partial \Omega}\Vert_{\mathcal{H}(\partial \Omega)} \leq C \Vert u\Vert_{H^1(\Omega)}.$ Conversely a function $u\in \mathcal{H}(\partial\Omega)$ has an extension to a function $U\in H^1(\Omega).$
\end{proposition}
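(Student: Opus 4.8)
The plan is to read this proposition as the specialization to exponent $s=1$ of the Jonsson and Jonsson--Wallin trace and extension theorem recalled in the preceding discussion. The single ingredient not yet established is that the boundary of a WP-class quasidisk is \1-regular; this is precisely the content of the forward-referenced Theorem \ref{thm:regularity of WP-class quasicircles}, which I would invoke at the very start. Granting it, $\partial \Omega$ is an \s-regular set with $s=1$, and since $1 \in [1,2)$ the hypotheses of the cited trace theorem are satisfied and $B^{1/2}_{2,2}(\partial \Omega) = \mathcal{H}(\partial \Omega)$ by definition.

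For the restriction half, I would apply the forward direction of the Jonsson--Wallin theorem with $s=1$ directly: any $u \in H^1(\Omega)$ possesses a well-defined trace $u|_{\partial \Omega}$ belonging to $B^{1/2}_{2,2}(\partial \Omega)$, together with the continuity estimate $\Vert u|_{\partial \Omega} \Vert_{B^{1/2}_{2,2}(\partial \Omega)} \le C \Vert u \Vert_{H^1(\Omega)}$. Rewriting $B^{1/2}_{2,2}(\partial \Omega)$ as $\mathcal{H}(\partial \Omega)$ then gives the asserted inequality verbatim. For the converse, I would use the extension half of the same theorem: since $u \in \mathcal{H}(\partial \Omega) = B^{1/2}_{2,2}(\partial \Omega)$ with $s=1 \in [1,2)$, there exists $U \in H^1(\Omega)$ restricting to $u$ on $\partial \Omega$, indeed depending continuously on the boundary data, although only existence is asserted in the statement.

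The only genuine obstacle is the \1-regularity of the WP-class quasicircle, which is postponed to Theorem \ref{thm:regularity of WP-class quasicircles}; once that is in hand the proposition is immediate, which is why it is phrased as something one \emph{can immediately conclude}. The two minor points I would be careful to verify are that the endpoint $s=1$ truly lies in the admissible range of the Jonsson--Wallin framework---the quoted statement explicitly allows $1 \le s < 2$, so this is fine---and that \1-regularity is being used in the strong sense of the two-sided Hausdorff-measure inequality at $s=1$, not merely in the weaker sense that the Hausdorff dimension equals one.
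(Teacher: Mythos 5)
Your proposal is correct and takes essentially the same route as the paper: the paper likewise treats the proposition as an immediate corollary of the Jonsson--Wallin trace and extension theorem at the exponent $s=1$, granted the $1$-regularity (chord-arc property) of WP-class quasicircles supplied by the forward-referenced Theorem \ref{thm:regularity of WP-class quasicircles}. Your two cautionary points---that $s=1$ lies in the admissible range $[1,2)$ and that regularity is meant in the strong two-sided Hausdorff-measure sense---are exactly what that theorem delivers, since a bi-Lipschitz image of $\mathbb{S}^1$ is chord-arc and hence $1$-regular.
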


We also need to recall the definition of a {\it{chord-arc}} curve.
\begin{definition}
A closed curve in $\mathbb{R}^2$ is called chord-arc if it is a \1-regular quasicircle.
\end{definition}
Therefore $\Gamma$ is a chord-arc curve if and only if it is rectifiable and there is a constant $C>0$ such that the length of the shorter arc of $\Gamma$ joining the two points $w_1$ and $w_2$ is bounded from above by $C|w_1 -w_2|$ (see e.g.\  \cite{GM}).

The following theorem establishes the fact that WP-class quasidisks are {\it{chord-arc domains}}, i.e., bounded domains in the plane that have a chord-arc boundary curve.
This gives a partial answer to the problem of intrinsically characterizing WP-class quasidisks posed by Takhtajan and
 Teo \cite[Part II Remark 1.10]{TT}.  (In the same remark they also posed the problem intrinsically characterizing WP-class quasisymmetries - solved by Shen \cite{Shen_characterization} - mentioned earlier in Remark \ref{re:Shen_solution_and_extension_to_RS}).

\begin{theorem}\label{thm:regularity of WP-class quasicircles}
Let the Beltrami differential $\mu$ belong to $L^2_{\mathrm{hyp}}(\disk^+).$ Then the corresponding WP-class quasicircle is a bi-Lipschitz image of the circle $\mathbb{S}^1$ and hence a chord-arc curve. Furthermore the corresponding WP-class quasidisk is a bi-Lipschitz image of the unit disk $\disk^+$.
\end{theorem}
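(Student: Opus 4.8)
The plan is to extract genuine metric regularity of $\Gamma$ from the integral WP condition, and only then to produce the bi-Lipschitz parametrizations by invoking rigidity results for quasicircles. First I would record what the hypothesis gives at the level of the uniformizing maps. Since $\mu\in L^2_{\mathrm{hyp}}(\disk^+)$, the normalized solution of the Beltrami equation (with $\mu$ on $\disk^+$ and $0$ on $\disk^-$) is conformal on $\disk^-$, and by the theorem of Cui and Hui and the characterizations quoted above, $\Gamma$ is a WP-class quasicircle whose interior uniformizing map $f\colon\disk^+\to\Omega^+$ satisfies $\iint_{\disk^+}|f''/f'|^2\,dA<\infty$ (and the exterior map satisfies the analogous bound on $\disk^-$). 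Equivalently $\phi:=\log f'$ lies in the holomorphic Dirichlet space $\mathcal{D}(\disk^+)$, hence its boundary trace lies in $H^{1/2}(\mathbb{S}^1)=\mathcal{H}(\mathbb{S}^1)$. I emphasize at the outset that $f$ itself need not be bi-Lipschitz, since $\phi$ may be unbounded; the bi-Lipschitz map must therefore be built indirectly.

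The heart of the argument, and the step I expect to be the main obstacle, is to show that $\dim_{\mathrm H}\Gamma=1$. The point is to convert the $L^2$ (energy) bound on $\phi$ into honest measure-theoretic control. Because $\phi\in H^{1/2}(\mathbb{S}^1)$, the critical Sobolev embedding $H^{1/2}(\mathbb{S}^1)\hookrightarrow\mathrm{VMO}(\mathbb{S}^1)\subset\mathrm{BMO}(\mathbb{S}^1)$ applies, and the Moser--Trudinger/Chang--Marshall exponential estimate for $H^{1/2}$ together with John--Nirenberg yields $e^{\lambda\phi}\in L^p(\mathbb{S}^1)$ for every finite $\lambda,p$. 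Since $\phi\in\mathrm{VMOA}$, this upgrades to $f'=e^{\phi}\in\bigcap_{p<\infty}H^p(\disk^+)$; in particular $f'\in H^1(\disk^+)$, so $f$ extends continuously to $\overline{\disk^+}$ with $\Gamma=f(\mathbb{S}^1)$ rectifiable of length $\int_0^{2\pi}|f'(e^{i\theta})|\,d\theta$. Hence $\dim_{\mathrm H}\Gamma=1$. (Alternatively, one may use Pommerenke's characterization: $\log f'\in\mathrm{VMOA}$ already forces $\Gamma$ to be asymptotically smooth, hence chord-arc, which would shorten the remaining steps.)

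Next I would upgrade dimension one to a bi-Lipschitz parametrization. By Rohde's theorem $\Gamma$ is bi-Lipschitz equivalent to a snowflake curve $S$, and since Hausdorff dimension is a bi-Lipschitz invariant we get $\dim_{\mathrm H}S=1$; as the only degenerate snowflake of dimension one is the round circle, the Falconer--Marsh classification of quasicircles by Hausdorff dimension identifies $S$ with $\mathbb{S}^1$ and hence gives a bi-Lipschitz homeomorphism $\mathbb{S}^1\to\Gamma$. Being a bi-Lipschitz image of the circle, $\Gamma$ is $1$-regular, i.e.\ chord-arc, which is the first assertion.

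Finally, for the two-dimensional statement I would promote the boundary map to the quasidisk: the Falconer--Marsh result characterizing bi-Lipschitz equivalence of quasidisks (equivalently, Tukia's planar bi-Lipschitz extension theorem) extends the bi-Lipschitz homeomorphism $\mathbb{S}^1\to\Gamma$ to a bi-Lipschitz homeomorphism $\disk^+\to\Omega$, so $\Omega$ is a bi-Lipschitz image of $\disk^+$. The two places where care is genuinely required are the dimension-one step (the only place the WP hypothesis is used, where an integral condition must be turned into rectifiability and Ahlfors regularity) and the identification of the dimension-one snowflake with the circle, which rests on the cited rigidity results of Rohde and Falconer--Marsh rather than on soft arguments.
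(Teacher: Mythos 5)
Your route to the quantitative heart of the theorem is genuinely different from the paper's, and that part of it is correct --- indeed it proves more than the paper's corresponding step. The paper obtains $d_{\mathrm{H}}(\Gamma)=1$ by quoting Becker--Pommerenke (the WP hypothesis forces an asymptotically conformal quasiconformal extension) together with Badger--Gill--Rohde--Toro; you instead argue entirely inside classical function theory: $\log f'$ lies in the Dirichlet space, hence in $\mathrm{VMOA}$, exponential integrability (John--Nirenberg on the compact circle, plus Jensen's inequality applied to the Poisson representation) gives $f'\in\bigcap_{p<\infty}H^p(\disk^+)$, and $f'\in H^1(\disk^+)$ already yields rectifiability of $\Gamma$, which is strictly stronger than dimension one. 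I would accept this step.

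The gap is in the passage from dimension one to a bi-Lipschitz parametrization by $\mathbb{S}^1$. Your claim that ``the only degenerate snowflake of dimension one is the round circle'' is false: Rohde's family contains snowflakes, built with parameters tending to $1/4$ or with snowflaking performed only at a sparse set of generations, which have Hausdorff dimension $1$ but infinite length, and hence are not bi-Lipschitz images of $\mathbb{S}^1$. More fundamentally, Hausdorff dimension is \emph{not} a complete bi-Lipschitz invariant of quasicircles --- that is precisely why Rohde's classification paper exists --- and the Falconer--Marsh theorem, correctly stated, classifies \emph{quasi-self-similar} circles; neither $\Gamma$ nor a general Rohde snowflake is quasi-self-similar. (You are in the same position as the paper here, which also quotes Falconer--Marsh for arbitrary quasicircles; but your snowflake rigidity claim is an additional error, and the detour through Rohde is in any case circular: if Falconer--Marsh applied to all quasicircles you could apply it to $\Gamma$ directly.) Even your stronger conclusion of rectifiability does not close the gap: one can insert into the circle, near a sparse sequence of points and at scales $2^{-k}$, finitely iterated Koch arcs of diameter $2^{-k}$ and length $k2^{-k}$, producing a rectifiable quasicircle of dimension one whose arc-length/chord ratios are unbounded. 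What is needed is $1$-regularity, i.e.\ the chord-arc property itself, and neither dimension nor finite length delivers it.

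The good news is that your parenthetical remark is the correct main line rather than an aside: since $\log f'\in\mathrm{VMOA}$, Pommerenke's theorem gives that $\Gamma$ is asymptotically smooth, hence chord-arc; the constant-speed (arc-length) parametrization is then bi-Lipschitz from $\mathbb{S}^1$ onto $\Gamma$, and Tukia's extension theorem --- which you cite, and which is also the paper's final step --- promotes it to a bi-Lipschitz homeomorphism of the plane, giving the statement for the quasidisk. With that substitution your argument is complete, bypasses Rohde and Falconer--Marsh entirely, and rests only on facts you have already established.
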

\begin{proof}  The assumption that the Beltrami coefficient $\mu\in L^2_{\mathrm{hyp}}(\disk^+)$ and the isomorphism $1/z$ between $\disk^+$ and $\disk^-$ together with a result of J. Becker and C. Pommerenke \cite[Corollary 1.4]{BP} yield that the corresponding quasiconformal map $f^{\mu}$ is asymptotically conformal in the sense of \cite {BP}. From this and a result of M. Badger, J. Gill, S. Rohde, T.Toro (see \cite[Corollary 2.7]{BGRT}) it readily follows that $d_{\mathrm{H}}(f^{\mu}(\mathbb{S}^1))=1$, i.e., the quasicircle associated with $\mu\in L^2_{\mathrm{hyp}}(\disk^+)$ has Haussdorff dimension exactly equal to 1. At this point we use a classical result of K. Falconer and D. Marsh \cite{FM} that if two quasicircles have the same Hausdorff dimension then they are bi-Lipschitz homeomorphic. Now since $d_{\mathrm{H}}(\mathbb{S}^1)=1$ it follows that there exists a bi-Lipschitz map $\varphi$ from the plane to itself such that $f^{\mu}(\mathbb{S}^1)=\varphi(\mathbb{S}^1)$. Furthermore, since $\mathbb{S}^1$ is both \1-regular and rectifiable and these two properties are both preserved under bi-Lipschitz homeomorphisms, it follows at once that $f^{\mu}(\mathbb{S}^1)$ is a chord-arc curve. Now it follows from a result of   P. Tukia \cite{Tukia} that $\varphi$ has a bi-Lipschitz extension to a map $\Phi$ with $\Omega= \Phi(\disk^+).$
\end{proof}

Now, using the fact that a WP-class quasidisk is a chord-arc domain, one can show that the Dirichlet problem is solvable on a WP-class quasidisk $\Omega$ with boundary values in $\mathcal{H}(\partial \Omega)$. In fact we show that $\mathcal{H}(\partial \Omega)$ consists precisely of functions
which are boundary values of harmonic functions in $\Omega$.  This firmly establishes its naturality.
\begin{theorem} \label{th:trace_of_Dirichlet}  Let $\Omega$ be a
 WP-class quasidisk such that $\infty \notin \Omega$ and $0\in\Omega$. Then the following statements are valid.
 \begin{enumerate}
   \item Every function $h \in \mathcal{H}(\partial \Omega)$ is the
 trace of an element $H \in \mathcal{D}_{\mathrm{harm}}(\Omega)$; furthermore,
 the linear operator taking $h$ to $H$ is bounded with respect to the $\mathcal{D}_{\mathrm{harm}}(\Omega)$
 and Besov norms.
   \item Every element $H \in \mathcal{D}_{\mathrm{harm}}(\Omega)$ has a trace in $\mathcal{H}(\partial \Omega)$.
    Furthermore the linear operator taking $H$ to its trace is bounded with respect to the $\mathcal{D}_{\mathrm{harm}}$ norm and Besov norms.
 \end{enumerate}

\end{theorem}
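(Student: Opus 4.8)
The plan is to treat the two directions separately, with part (2) following quickly from results already in hand and part (1) requiring the genuine construction of a harmonic extension. For part (2), let $H \in \mathcal{D}_{\mathrm{harm}}(\Omega)$. Since $\Omega$ is a quasidisk containing $0$, Theorem \ref{thm: equivalence of the dirichlet and sobolev norms} gives $\|H\|_{H^1(\Omega)} \leq C\|H\|_{\mathcal{D}_{\mathrm{harm}}}$, so in particular $H \in H^1(\Omega)$. By Theorem \ref{thm:regularity of WP-class quasicircles} the boundary $\partial\Omega$ is \1-regular, so Proposition \ref{prop: traces on quasidisks} applies and yields a trace $H|_{\partial\Omega} \in \mathcal{H}(\partial\Omega)$ with $\|H|_{\partial\Omega}\|_{\mathcal{H}(\partial\Omega)} \leq C\|H\|_{H^1(\Omega)} \leq C'\|H\|_{\mathcal{D}_{\mathrm{harm}}}$. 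This is exactly the assertion of (2), and boundedness of the trace operator is immediate.

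For part (1) the task is to promote an arbitrary $H^1$-extension of $h$ to a harmonic one with the same boundary values. First I would invoke the converse half of Proposition \ref{prop: traces on quasidisks} to produce $U \in H^1(\Omega)$ with trace $h$ and $\|U\|_{H^1(\Omega)} \leq C\|h\|_{\mathcal{H}(\partial\Omega)}$. This $U$ need not be harmonic, so the second step is to solve the Dirichlet problem carrying the boundary data of $U$. Working on the Hilbert space $H^1_0(\Omega)$ equipped with the Dirichlet inner product $\langle u,v\rangle = \iint_\Omega \nabla u \cdot \overline{\nabla v}\, dA$ — a genuine complete inner product because $\Omega$ is bounded and hence the Poincar\'e inequality $\|w\|_{L^2} \leq C\|\nabla w\|_{L^2}$ holds on $H^1_0(\Omega)$ — I would let $w \in H^1_0(\Omega)$ be the Dirichlet-orthogonal projection of $U$ and set $H = U - w$. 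By construction $H$ is Dirichlet-orthogonal to $H^1_0(\Omega)$, i.e.\ $\iint_\Omega \nabla H \cdot \overline{\nabla\phi}\, dA = 0$ for every $\phi \in H^1_0(\Omega)$, so $H$ is weakly harmonic and therefore harmonic by Weyl's lemma. (The real and imaginary parts can be handled separately, or one simply notes that harmonicity is preserved under complex scalars.) Since $w \in H^1_0(\Omega)$ has zero trace, $H$ has the same trace as $U$, namely $h$.

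It remains to bound $\|H\|_{\mathcal{D}_{\mathrm{harm}}}$ by $\|h\|_{\mathcal{H}(\partial\Omega)}$. The orthogonal decomposition $U = H + w$ gives the Pythagorean identity for the Dirichlet energy, whence $\iint_\Omega(|H'|^2 + |\overline{H}'|^2)\,dA \leq \iint_\Omega(|U'|^2 + |\overline{U}'|^2)\,dA \leq \|U\|_{H^1(\Omega)}^2$. For the $L^2$ part, $\|H\|_{L^2} \leq \|U\|_{L^2} + \|w\|_{L^2}$, and the Poincar\'e inequality on $H^1_0(\Omega)$ together with the energy bound gives $\|w\|_{L^2} \leq C\|\nabla w\|_{L^2} \leq C\|U\|_{H^1(\Omega)}$. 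Combining, $\|H\|_{H^1(\Omega)} \leq C\|U\|_{H^1(\Omega)} \leq C'\|h\|_{\mathcal{H}(\partial\Omega)}$. Since $H$ is harmonic, Theorem \ref{thm: equivalence of the dirichlet and sobolev norms} then yields $\|H\|_{\mathcal{D}_{\mathrm{harm}}} \leq C\|H\|_{H^1(\Omega)} \leq C''\|h\|_{\mathcal{H}(\partial\Omega)}$, which gives boundedness of the extension operator and completes (1).

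I expect the genuine analytic difficulty to be already absorbed into Proposition \ref{prop: traces on quasidisks} and Theorem \ref{thm:regularity of WP-class quasicircles}: the trace and extension theory of Jonsson and Wallin on \s-regular boundaries, and the fact that WP-class quasicircles are chord-arc (hence \1-regular) curves. Granting these, the only delicate points in the construction above are that the Dirichlet form is coercive on $H^1_0(\Omega)$ (ensured by boundedness of $\Omega$) and that $H^1_0(\Omega)$ lies in the kernel of the trace operator, so that $H$ and $U$ share boundary values; the latter holds because the trace is continuous and vanishes on $C_c^\infty(\Omega)$, whose $H^1$-closure is $H^1_0(\Omega)$. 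Alternatively, the existence of the harmonic solution with Besov boundary data can be quoted directly from the chord-arc Dirichlet-problem results of \cite{ChangLewis} and \cite{MMM}, which is the route signposted in the introduction; the variational argument above is a self-contained substitute that also delivers the norm estimate for free via the Pythagorean identity.
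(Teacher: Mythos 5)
Your proof of part (2) is exactly the paper's: the norm equivalence of Theorem \ref{thm: equivalence of the dirichlet and sobolev norms} followed by the Jonsson--Wallin trace bound of Proposition \ref{prop: traces on quasidisks}. For part (1), however, you take a genuinely different route, and it is correct. The paper disposes of (1) by quoting the solvability of the Dirichlet problem on chord-arc domains with Besov data --- ``a careful examination of the proof of Theorem 3.4 in \cite{MMM}'', together with \cite{ChangLewis} --- and then transfers the resulting $H^1$ bound to the $\mathcal{D}_{\mathrm{harm}}$ norm via the same equivalence theorem. You instead construct the harmonic extension by hand: take the Jonsson--Wallin $H^1$-extension $U$ of $h$ (the converse half of Proposition \ref{prop: traces on quasidisks}; note that the norm bound $\Vert U\Vert_{H^1(\Omega)}\leq C\Vert h\Vert_{\mathcal{H}(\partial\Omega)}$ is not stated in the Proposition itself but is asserted in the paper's preceding discussion of Jonsson's theorem, so you are entitled to it), subtract the Dirichlet-orthogonal projection $w$ of $U$ onto $H^1_0(\Omega)$, invoke Weyl's lemma to see that $H=U-w$ is harmonic, and extract the estimate from the Pythagorean identity plus the Friedrichs--Poincar\'e inequality on $H^1_0$ of a bounded domain. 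The supporting details all check out: coercivity of the Dirichlet form on $H^1_0(\Omega)$ needs only boundedness of $\Omega$; the Jonsson--Wallin trace annihilates $H^1_0(\Omega)$ because it is continuous on $H^1(\Omega)$ and vanishes on $C_c^\infty(\Omega)$; and the Wirtinger-derivative energy is a constant multiple of the gradient energy, so orthogonality transfers between the two forms. What your argument buys is self-containedness --- it replaces the layer-potential machinery on chord-arc domains, whose cited form requires ``careful examination'' to extract the needed statement, by the classical variational solution of the Dirichlet problem --- at the cost of leaning on the extension (not merely the trace) half of the Jonsson--Wallin theory with norm control. What the paper's route buys is brevity and economy of tools: it reuses exactly the references (\cite{ChangLewis}, \cite{MMM}) that it needs again later for Proposition \ref{prop:multi_trace} on the multiply connected domain.
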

\begin{proof} $(1)$ A careful examination of the proof of Theorem 3.4 in \cite{MMM} reveals that the Dirichlet problem on a chord-arc domain with boundary data in $\mathcal{H}(\partial \Omega)$ has a solution whose $H^1$ norm depends continuously on the data; see in particular \cite{ChangLewis}. Since the $H^1$ and $\mathcal{D}_{\mathrm{harm}}$ norms are equivalent by Theorem \ref{thm: equivalence of the dirichlet and sobolev norms}, this proves the first part of the theorem.

 $(2)$ The identity map from $\mathcal{D}_{\mathrm{harm}}(\Omega)$ to $H^1(\Omega)$ is bounded by Theorem \ref{thm: equivalence of the dirichlet and sobolev norms}. The claim now follows from the fact that the trace operator from $H^1(\Omega)$ to $\mathcal{H}(\partial \Omega)$
 is bounded by Proposition \ref{prop: traces on quasidisks}.
\end{proof}

\end{subsection}
\end{section}

\begin{section}{The jump problem and parametrization of the Dirichlet spaces in the simply connected case}
In this section, we show that the jump problem is solvable for WP-quasidisks for boundary values in $\mathcal{H}(\partial \Omega)$.
We then use this result to define a natural Hilbert space isomorphism between $\mathcal{D}(\disk^\pm)$ and $\mathcal{D}(\Omega^\pm)$ for
WP-quasidisks.  This isomorphism is a version of an isomorphism of Shen \cite{ShenFaber}
between the formal space $l^2$ and $\mathcal{D}(\Omega^\pm)$ defined using Faber polynomials.
Here, we identify $l^2$ with $\mathcal{D}(\disk^\pm)$ and give
 this isomorphism an explicit formula in terms of
composition and projection operators.  We deal with the jump problem in Section \ref{se:jump} and the isomorphism in
Section \ref{se:isomorphism_simply_connected}.
\begin{subsection}{The jump problem on WP-class quasicircles} \label{se:jump}

From Theorem \ref{thm:regularity of WP-class quasicircles} it follows immediately that the boundary of a WP-class quasidisk is a rectifiable curve and therefore Cauchy integrals will be defined in a natural way on the WP-class quasicircles.
Next we discuss Cauchy integrals. Let $\Gamma$ be a closed oriented rectifiable Jordan curve in the plane not containing $\infty$ and let $\Omega^+$ and $\Omega^-$ denote its two complementary regions. $\Omega^-$ will denote the region containing $\infty$.  Given a function $f$ on $\Gamma$ one defines its Cauchy integral $P(\Gamma) f(z)$ for $z \notin \Gamma$ by

\begin{equation}\label{defn: Cauchy integral}
P(\Gamma)f(z)=\frac{1}{2\pi i} \int_{\Gamma} \frac{f(\zeta)}{\zeta-z}\, d\zeta.
\end{equation}
Now if $P_{+}(\Gamma)f$ and $P_{-}(\Gamma)f$ are restrictions of $P(\Gamma)f(z)$ to $\Omega^+$ and $\Omega^-$ respectively, and if $f_{+}$ and $f_{-}$ are their boundary values, the Sokhotski-Plemelj jump formula yields that

\begin{equation}\label{defn: sokotzki-plemelj}
f_{\pm}(z)=\frac{\pm 1}{2} f(z) + \frac{1}{2\pi i} \mathrm{P.V.} \int_{\Gamma} \frac{f(\zeta)}{\zeta-z}\, d\zeta, \,\,\,\, z\in \Gamma
\end{equation}

A classical result due to G. David \cite{david} yields that if $\Gamma$ is a chord-arc curve then given $f \in L^2 (\Gamma)$, one has the estimate, $\Vert f_{\pm}\Vert_{L^2 (\Gamma)}\leq C \Vert f\Vert_{L^2 (\Gamma)}$.

We will also need estimates for a certain integral operator that appears frequently in function theory. This operator is defined by

\begin{equation}\label{defn: operator T}
T_{\Omega}f(z)= \iint_{\Omega} \frac{f(\zeta)}{\zeta-z}\, dA(\zeta).
\end{equation}
We also have that
\begin{equation} \label{eq:Tderivative_principal_value}
 \partial_z T_{\Omega}f(z) = \lim _{\varepsilon \to 0}\iint_{\Omega\cap\{ |\zeta-z|> \varepsilon\}} \frac{f(\zeta)}{(\zeta-z)^2}\, dA(\zeta).
\end{equation}

\begin{lemma}\label{Lem: H1 boundedness of operator T}

Let $\Omega$ be a bounded domain in the plane. Then $\Vert T_{\Omega}f\Vert_{L^2 (\Omega)}\leq C \Vert f\Vert_{L^2 (\Omega)}$ and $\Vert \partial_z T_{\Omega}f\Vert_{L^2 (\Omega)}\leq C \Vert f\Vert_{L^2 (\Omega)}$.  Thus

\begin{equation}\label{estim: Sobolev norm of operator T}
 \Vert T_{\Omega}f\Vert_{H^1(\Omega)}\leq C \Vert f\Vert_{L^2 (\Omega)}.
\end{equation}
\end{lemma}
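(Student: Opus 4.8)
The plan is to bound the two operators $f \mapsto T_\Omega f$ and $f \mapsto \partial_z T_\Omega f$ separately in $L^2(\Omega)$, and then read off the $H^1$ estimate \eqref{estim: Sobolev norm of operator T} by combining these with the elementary fact that $\overline{\partial}_z T_\Omega f = \pi f$ (so that the $\overline{h}'$-part of the $H^1$ norm is controlled by $\|f\|_{L^2(\Omega)}$ for free). Thus the whole lemma reduces to two $L^2 \to L^2$ bounds, after which the final display is just the definition \eqref{defn:norm of sobolev} of the $H^1$ norm together with the triangle inequality.

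For the first bound, $\|T_\Omega f\|_{L^2(\Omega)} \le C \|f\|_{L^2(\Omega)}$, the kernel $1/|\zeta - z|$ is weakly singular, so I would extend $f$ by zero outside $\Omega$ and estimate the convolution-type operator on all of $\mathbb{R}^2$. Since $\Omega$ is bounded, the kernel $1/|\zeta-z|$ restricted to the ball $\{|\zeta - z| \le R\}$ (with $R = \mathrm{diam}(\Omega)$) is integrable, with $\iint_{|w| \le R} |w|^{-1} \, dA(w) = 2\pi R < \infty$. A Schur-test argument (or equivalently Young's inequality for the convolution with the locally integrable kernel $|w|^{-1}\chi_{|w|\le R}$) then gives the $L^2$ bound with a constant depending only on $\mathrm{diam}(\Omega)$. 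This step is routine.

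The main obstacle is the second bound, $\|\partial_z T_\Omega f\|_{L^2(\Omega)} \le C \|f\|_{L^2(\Omega)}$, because by \eqref{eq:Tderivative_principal_value} the operator $\partial_z T_\Omega$ has the Calder\'on--Zygmund kernel $1/(\zeta - z)^2$, which is only borderline (non-)integrable and must be interpreted as a principal value. Here the cleanest route is to recognize that, after extending $f$ by zero to $\mathbb{C}$, the operator agrees up to a constant with the Beurling--Ahlfors transform
\[
 S f(z) = -\frac{1}{\pi}\,\mathrm{p.v.}\!\iint_{\mathbb{C}} \frac{f(\zeta)}{(\zeta - z)^2}\,dA(\zeta),
\]
which is a classical singular integral operator known to be bounded on $L^2(\mathbb{C})$ (indeed it is a Fourier multiplier of modulus one, so an isometry). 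Restricting back to $\Omega$ and noting $\|f\chi_\Omega\|_{L^2(\mathbb{C})} = \|f\|_{L^2(\Omega)}$ yields the desired estimate; I would cite the standard Calder\'on--Zygmund / Beurling transform theory for this rather than reprove the $L^2$ boundedness from scratch.

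Finally, assembling the pieces: by \eqref{defn:norm of sobolev},
\[
 \|T_\Omega f\|_{H^1(\Omega)}^2 = \|T_\Omega f\|_{L^2(\Omega)}^2 + \|\partial_z T_\Omega f\|_{L^2(\Omega)}^2 + \|\overline{\partial}_z T_\Omega f\|_{L^2(\Omega)}^2,
\]
and since $\overline{\partial}_z T_\Omega f = \pi f$ the last term equals $\pi^2\|f\|_{L^2(\Omega)}^2$, while the first two are bounded by the two estimates above. Adding them gives \eqref{estim: Sobolev norm of operator T} with a constant depending only on $\mathrm{diam}(\Omega)$. I would also record that the distributional identity $\overline{\partial}_z T_\Omega f = \pi f$, used in the last step, follows from the fundamental solution property $\overline{\partial}_z (1/(\pi z)) = \delta_0$, so no additional regularity on $\partial\Omega$ is needed—consistent with the hypothesis that $\Omega$ is merely a bounded domain.
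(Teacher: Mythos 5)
Your proposal is correct and follows essentially the same route as the paper: for $T_\Omega$ a weakly-singular-kernel bound (the paper uses pointwise Cauchy--Schwarz plus Tutschke's estimate $\iint_\Omega |\zeta-z|^{-1}\,dA \leq 2\sqrt{\pi}\,|\Omega|^{1/2}$, you use Young's inequality for the truncated kernel, which is the same Schur-type argument), and for $\partial_z T_\Omega$ the extension of $f$ by zero combined with the $L^2(\mathbb{C})$ isometry of the Beurling transform, which the paper cites from Ahlfors in the form $\Vert \partial_z T_{\mathbb{C}} f\Vert_{L^2(\mathbb{C})} = \pi \Vert f\Vert_{L^2(\mathbb{C})}$. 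Your explicit handling of the third term of the $H^1$ norm via the distributional identity for $\overline{\partial}_z T_\Omega f$ is a sensible addition that the paper's ``Thus'' leaves implicit, though note the sign is $\overline{\partial}_z T_\Omega f = -\pi f$ rather than $+\pi f$, which of course does not affect the estimate.
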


\begin{proof}
To establish the boundedness of $T_{\Omega}$ on $L^2 (\Omega)$, we observe that using the Cauchy-Schwarz inequality we have the pointwise estimate

\begin{equation}\label{estim:Pointwise estimate for operator T}
|T_{\Omega}f(z)| \leq \left\{\iint_{\Omega} \frac{|f(\zeta)|^2}{|\zeta-z|}\, dA(\zeta)\right\}^{1/2}\left\{\iint_{\Omega} \frac{1}{|\zeta-z|}\, dA(\zeta)\right\}^{1/2}.
\end{equation}

But if $|\Omega|$ denotes the 2-dimensional Lebesgue measure of $\Omega$, then it can be shown, see e.g., W. Tutschke \cite{Tut} that $\iint_{\Omega} \frac{1}{|\zeta-z|}\, dA(\zeta)\leq 2\sqrt{\pi}|\Omega|^{1/2},$ for $z \in \Omega.$ Therefore squaring and integrating \eqref{estim:Pointwise estimate for operator T} and using Fubini's theorem, we obtain
\begin{equation}\label{estim: L2 estimate for operator T}
\iint_{\Omega} |T_{\Omega}f(z)|^2 dA(z)\leq 4\pi|\Omega|\Vert f\Vert^{2}_{L^{2}(\Omega)},
\end{equation}
which is the desired $L^2$ boundedness.

In order to show the $L^2 (\Omega)$ boundedness of $\partial_z T_{\Omega}$ we just use
(\ref{eq:Tderivative_principal_value}) and the fact that $$\Vert \partial_z T_{\Omega}f\Vert_{L^2 (\Omega)} \leq \Vert \partial_z T_{\Omega}f\Vert_{L^2 (\mathbb{C})}=\Vert \partial_z T_{\mathbb{C}}(f\chi_{\Omega})\Vert_{L^2 (\mathbb{C})},$$
where $\chi_{\Omega}$ denotes the characteristic function of $\Omega$. Now, as was shown by L. Ahlfors in \cite{Ahlfors}, $\Vert \partial_z T_{\mathbb{C}}f\Vert_{L^2(\mathbb{C})}=\pi \Vert f\Vert_{L^2 (\mathbb{C})}$. Therefore

\begin{equation}
\Vert \partial_z T_{\Omega}f\Vert_{L^2 (\Omega)} \leq \pi \Vert f\chi_{\Omega}\Vert_{L^2 (\mathbb{C})} =\pi\Vert f\Vert_{L^2 (\Omega)},
 \end{equation}
 which concludes the proof of the lemma.
\end{proof}
Now we have all the ingredients to state and solve the following Riemann boundary value problem, sometimes called (with various kinds of regularity) the jump problem.

\begin{theorem} \label{th:Cauchy_bounded}
Let $\Omega^+$ be a WP-class quasidisk as above and let $u$ be in $\mathcal{H}(\partial\Omega^+)$.  Let $\Omega^-$ denote the complement of
$\overline{\Omega^+}$ in $\sphere$.  Then the jump problem can be solved with $u$ as data in the sense that there exist holomorphic functions $u_{\pm}$ on $\Omega^{\pm}$ such that, $u_\pm \in \mathcal{D}(\Omega^{\pm})$ and $u_{+}(z)-u_{-}(z)= u(z)$ for $z\in \partial\Omega$. Furthermore $u_\pm$ depend continuously on the data; that is
the Cauchy projections are bounded.
\end{theorem}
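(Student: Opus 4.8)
The plan is to realize the solution of the jump problem as the Cauchy integral \eqref{defn: Cauchy integral} of the data, and then to extract the Dirichlet estimates from the generalized Cauchy (Cauchy--Pompeiu) formula combined with Lemma \ref{Lem: H1 boundedness of operator T}. Write $\Gamma = \partial\Omega^+$. Since $\mathcal{H}(\partial\Omega^+) = B^{1/2}_{2,2}(\partial\Omega^+) \subset L^2(\Gamma)$ and $\Gamma$ is chord-arc by Theorem \ref{thm:regularity of WP-class quasicircles}, the theorem of David \cite{david} applies: the Cauchy integral $P(\Gamma)u$ is well defined, its restrictions $u_\pm := P_\pm(\Gamma)u$ to $\Omega^\pm$ are holomorphic, and the Sokhotski--Plemelj formula \eqref{defn: sokotzki-plemelj} gives $u_+ - u_- = u$ as $L^2(\Gamma)$ boundary values. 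Thus the entire content of the theorem is that $u_\pm \in \mathcal{D}(\Omega^\pm)$ with $\|u_\pm\|_{\mathcal{D}(\Omega^\pm)} \le C\|u\|_{\mathcal{H}(\partial\Omega^+)}$, i.e. boundedness of the two projections.

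First I would pass from the boundary data to an area representation. By Proposition \ref{prop: traces on quasidisks} there is an extension $W \in H^1(\Omega^+)$ with $W|_{\Gamma} = u$ and $\|W\|_{H^1(\Omega^+)} \le C\|u\|_{\mathcal{H}(\partial\Omega^+)}$; put $g := \bar\partial W \in L^2(\Omega^+)$, so that $\|g\|_{L^2(\Omega^+)} \le \|W\|_{H^1(\Omega^+)} \le C\|u\|_{\mathcal{H}(\partial\Omega^+)}$. Applying the Cauchy--Pompeiu formula to $W$ rewrites the Cauchy integral of its trace purely in terms of $W$ and the operator $T_{\Omega^+}$ of \eqref{defn: operator T}: for $z \in \Omega^+$ one obtains $u_+(z) = W(z) + \tfrac{1}{\pi} T_{\Omega^+}g(z)$, while for $z \in \Omega^-$ the term $W(z)$ is absent and one obtains $u_-(z) = \tfrac{1}{\pi} T_{\Omega^+}g(z)$. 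The singular boundary integral has thereby been replaced by the area integral of an honest $L^2$ function, to which Lemma \ref{Lem: H1 boundedness of operator T} applies; note also that $T_{\Omega^+}g$ is continuous across $\Gamma$, so these two formulas already exhibit the jump $u_+ - u_- = u$ directly.

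The estimates are then immediate. For the interior piece, $u_+ = W + \tfrac{1}{\pi}T_{\Omega^+}g$ lies in $H^1(\Omega^+)$ because $W \in H^1(\Omega^+)$ and $\|T_{\Omega^+}g\|_{H^1(\Omega^+)} \le C\|g\|_{L^2(\Omega^+)}$ by Lemma \ref{Lem: H1 boundedness of operator T}; since $u_+$ is holomorphic this gives $u_+ \in \mathcal{D}(\Omega^+)$, and $|u_+(0)|$ is controlled by $\|u_+\|_{L^2(\Omega^+)}$ through the sub-mean-value inequality exactly as in the proof of Theorem \ref{thm: equivalence of the dirichlet and sobolev norms}. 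For the exterior piece, $u_- = \tfrac{1}{\pi}T_{\Omega^+}g$ is holomorphic on $\Omega^-$ with $u_-(\infty)=0$ because $g$ is supported in the bounded set $\Omega^+$; differentiating, $u_-' = \tfrac1\pi \partial_z T_{\Omega^+}g$, and the Ahlfors $L^2(\mathbb{C})$-identity $\|\partial_z T_{\mathbb{C}}(g\chi_{\Omega^+})\|_{L^2(\mathbb{C})} = \pi\|g\|_{L^2(\Omega^+)}$ used in Lemma \ref{Lem: H1 boundedness of operator T} yields $\|u_-'\|_{L^2(\Omega^-)} \le \|u_-'\|_{L^2(\mathbb{C})} = \|g\|_{L^2(\Omega^+)} \le C\|u\|_{\mathcal{H}(\partial\Omega^+)}$. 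Chaining the three inequalities gives boundedness of both projections.

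The hard part will be justifying the Cauchy--Pompeiu representation at the regularity available here: $W$ is only $H^1$ and $\Gamma$ is merely chord-arc, so the classical $C^1$-version cannot be quoted directly. I would handle this by approximating $W$ in $H^1(\Omega^+)$ by functions smooth up to the boundary and passing to the limit, using on one side the continuity of the trace operator $H^1(\Omega^+) \to \mathcal{H}(\partial\Omega^+)$ (Proposition \ref{prop: traces on quasidisks}) together with the $L^2(\Gamma)$-boundedness of the Cauchy singular integral on chord-arc curves \cite{david}, and on the other the $L^2$-continuity of $g \mapsto T_{\Omega^+}g$ from Lemma \ref{Lem: H1 boundedness of operator T}. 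The same limiting argument identifies the $H^1$-traces of $u_\pm$ with the $L^2$ boundary values furnished by \eqref{defn: sokotzki-plemelj}, so that the identity $u_+ - u_- = u$ holds in $\mathcal{H}(\partial\Omega^+)$ and the decomposition is genuinely the Sokhotski--Plemelj one.
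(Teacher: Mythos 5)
Your proposal follows essentially the same route as the paper's own proof: take $u_\pm = P_\pm(\Gamma)u$, extend $u$ to $W \in H^1(\Omega^+)$ by Proposition \ref{prop: traces on quasidisks}, rewrite the Cauchy integral via the Cauchy--Pompeiu formula as $W + \tfrac{1}{\pi}T_{\Omega^+}(\overline{\partial}W)$ inside (and $\tfrac{1}{\pi}T_{\Omega^+}(\overline{\partial}W)$ outside), and conclude with Lemma \ref{Lem: H1 boundedness of operator T}. It is correct, and in fact you make explicit two points the paper leaves implicit, namely the exterior estimate via the Ahlfors identity and the limiting argument justifying the Pompeiu representation at $H^1$ regularity.
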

\begin{proof}
From the discussion above on the Cauchy integral, it readily follows that the solution of this problem is given by  $u(z)_\pm= P_{\pm}(\partial\Omega^+) u(z)$. It remains to prove that one has the estimate

\begin{equation}\label{jump problem estimate}
\Vert u_+ \Vert_{H^1(\Omega^{+})} \leq c \Vert u \Vert_{\mathcal{H}(\partial\Omega^+)}.
\end{equation}
The corresponding estimate on $\Omega^{-}$ is similar.
Now since $u\in \mathcal{H}(\partial\Omega^+)$, it has an extension $v \in H^1(\Omega^{+})$ (actually this $v$ also has an extension to the whole plane thanks to the result in \cite{GLV}). Furthermore, Proposition \ref{prop: traces on quasidisks} yields that $\Vert v \Vert_{H^1(\Omega^{+})} \leq c \Vert u \Vert_{\mathcal{H}(\partial\Omega^+)}.$ Moreover it is known that for $v\in H^1(\Omega^{+})$ (using the fact that $\partial\Omega^+$ is rectifiable),

\begin{equation}\label{Cauchy integral for H1}
P_{+}(\partial\Omega^+) v(z)= v(z)+\frac{1}{\pi} \iint_{\Omega^{+}} \frac{\overline{\partial}v(\zeta)}{(\zeta-z)^2}\, dA(\zeta)=v(z)+\frac{1}{\pi}T_{\Omega^{+}}(\overline{\partial}v)(z) ,
\end{equation}
where the integral above is taken as a principal value integral.

Using these facts and estimate \eqref{estim: Sobolev norm of operator T} of Lemma \ref{Lem: H1 boundedness of operator T} we can deduce that

\begin{align*}
              \Vert u_+\Vert_{H^1(\Omega^{+})} = \Vert P_{+}(\partial\Omega^{+}) v \Vert_{H^1(\Omega^{+})}
              & \leq \Vert v \Vert_{H^1(\Omega^{+})}+ C_1 \Vert \overline{\partial}v\Vert_{L^2(\Omega^{+})} \\
               &\leq  (1+C_1)  \Vert v\Vert_{H^1(\Omega^{+})} \\
               & \leq C_2  \Vert u\Vert_{\mathcal{H}(\partial\Omega^+)}
\end{align*}
as claimed.
\end{proof}
 Now, as a corollary of Theorem \ref{th:Cauchy_bounded} we have the following result.

\begin{corollary}\label{cor: traces of holomorphic functions in dirichlet space}
 Let $\Omega^+$ be a WP-class
quasidisk in the plane such that $\infty \notin \overline{\Omega^+}$, bounded by the curve $\Gamma$. Then the operators
$ P_{\pm}(\Gamma) \colon \mathcal{H}(\Gamma) \rightarrow \mathcal{D}(\Omega^\pm)$ are bounded.
\end{corollary}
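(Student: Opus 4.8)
The plan is to read the corollary as a reformulation of Theorem~\ref{th:Cauchy_bounded} in which the target norm is changed from the Sobolev norm $\|\cdot\|_{H^1}$ to the Dirichlet norm $\|\cdot\|_{\mathcal{D}(\Omega^\pm)}$. Theorem~\ref{th:Cauchy_bounded} already produces the holomorphic functions $u_\pm = P_\pm(\Gamma)u \in \mathcal{D}(\Omega^\pm)$ together with the bound $\|u_+\|_{H^1(\Omega^+)} \le C\|u\|_{\mathcal{H}(\Gamma)}$ on the bounded side. Thus the only remaining work is to compare the two norms on holomorphic functions, and this must be done separately on the two sides: the bounded component $\Omega^+$ is immediate, while the unbounded component $\Omega^-$ is the real obstacle, because the $H^1$-norm over $\Omega^-$ involves $\|\cdot\|_{L^2(\Omega^-)}$, which need not be finite for a Dirichlet function on an unbounded domain.

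On the bounded side I would argue as follows. After a harmless translation we may assume $0 \in \Omega^+$. For a holomorphic $h$ the conjugate-derivative term vanishes, since $\partial_z\overline h = \overline{\partial_{\bar z}h}=0$, so $\|h\|_{H^1(\Omega^+)}^2 = \|h\|_{L^2(\Omega^+)}^2 + \|h'\|_{L^2(\Omega^+)}^2$, whereas $\|h\|_{\mathcal{D}(\Omega^+)}^2 = |h(0)|^2 + \|h'\|_{L^2(\Omega^+)}^2$. As $|h|^2$ is subharmonic, the sub-mean-value inequality over a disk $\mathbb{D}(0,r)\subset \Omega^+$ gives $|h(0)|^2 \le (\pi r^2)^{-1}\|h\|_{L^2(\Omega^+)}^2$, exactly as in the second half of the proof of Theorem~\ref{thm: equivalence of the dirichlet and sobolev norms}. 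Hence $\|h\|_{\mathcal{D}(\Omega^+)} \le C\|h\|_{H^1(\Omega^+)}$, valid for any bounded WP-class quasidisk; applying it to $h=u_+$ together with Theorem~\ref{th:Cauchy_bounded} yields $\|P_+(\Gamma)u\|_{\mathcal{D}(\Omega^+)}\le C\|u\|_{\mathcal{H}(\Gamma)}$.

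For the unbounded side I would transplant to a bounded domain by the inversion $\iota(z)=1/z$. This is a M\"obius map whose pole lies in $\Omega^+$, so it is holomorphic and bi-Lipschitz on a neighbourhood of $\overline{\Omega^-}$; it carries $\Omega^-$ onto a bounded WP-class quasidisk $\hat\Omega:=\iota(\Omega^-)\ni 0$ bounded by the WP-class quasicircle $\hat\Gamma := \iota(\Gamma)$, M\"obius maps preserving the WP class. Since the Dirichlet energy is conformally invariant and $u_-(\infty)=0$, I would write $\|u_-\|_{\mathcal{D}(\Omega^-)}^2=\iint_{\Omega^-}|u_-'|^2\,dA=\iint_{\hat\Omega}|(u_-\circ\iota^{-1})'|^2\,dA$. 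A change of variables $\zeta=1/\xi$ in the defining Cauchy integral, using the partial-fraction identity
\[ \frac{1}{2\pi i}\,\frac{d\zeta}{\zeta-z}= \pm\frac{1}{2\pi i}\left(\frac{1}{\xi-w}-\frac{1}{\xi}\right)d\xi,\qquad z=\tfrac1w,\ \zeta=\tfrac1\xi, \]
shows that $u_-\circ\iota^{-1}$ coincides, up to a sign and an additive constant that is annihilated by differentiation, with the Cauchy projection onto the bounded component $\hat\Omega$ of the transplanted data $\tilde u := u\circ\iota^{-1}$ on $\hat\Gamma$. Applying the bounded-side estimate just proved to $\hat\Omega$ gives $\|u_-\circ\iota^{-1}\|_{\mathcal{D}(\hat\Omega)}\le C\|\tilde u\|_{\mathcal{H}(\hat\Gamma)}$, and finally $\|\tilde u\|_{\mathcal{H}(\hat\Gamma)}\le C\|u\|_{\mathcal{H}(\Gamma)}$ by the bi-Lipschitz invariance of the Besov seminorm in Definition~\ref{defn: besov space norm}, equivalently by the composition result Theorem~\ref{th:one-sided_composition}. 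Chaining these inequalities bounds $P_-(\Gamma)$.

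The main obstacle is precisely the passage to $\Omega^-$: one must convert the conformally invariant Dirichlet energy on the unbounded domain into an estimate on a bounded quasidisk, and the delicate point is verifying that the inversion transforms the Cauchy integral into a genuine bounded-side Cauchy projection plus an irrelevant constant, together with checking that $\mathcal{H}$ is stable under the inversion. Both reduce to careful bookkeeping with orientations, the partial-fraction identity above, and the bi-Lipschitz comparison $|x-y|\asymp|\iota(x)-\iota(y)|$ on $\Gamma$.
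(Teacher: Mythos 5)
Your proposal is correct, and on the bounded component it coincides with the paper's route: the paper deduces the corollary directly from Theorem \ref{th:Cauchy_bounded} together with the norm comparison of Theorem \ref{thm: equivalence of the dirichlet and sobolev norms} (your sub-mean-value argument simply reproves the relevant half of that theorem), plus the observation that the Cauchy kernel forces $P_-(\Gamma)h(\infty)=0$. Where you genuinely diverge is on the unbounded component. The paper's proof of Theorem \ref{th:Cauchy_bounded} establishes the estimate $\Vert u_+\Vert_{H^1(\Omega^+)}\leq c\Vert u\Vert_{\mathcal{H}(\Gamma)}$ in detail and dismisses $\Omega^-$ with ``the corresponding estimate on $\Omega^-$ is similar''; read literally as an $H^1(\Omega^-)$ bound this is problematic, since $u_-$ decays only like $1/z$ (with leading coefficient $-\frac{1}{2\pi i}\int_\Gamma u\,d\zeta$, generically nonzero), so $u_-\notin L^2(\Omega^-)$ in general and only the Dirichlet seminorm makes sense there. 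Your inversion argument — transplanting to the bounded WP-class quasidisk $\iota(\Omega^-)$ (WP-invariance under $z\mapsto 1/z$ follows from the paper's characterization via $1/g(1/z)\in\Oqco(0)$), using conformal invariance of the Dirichlet energy, the partial-fraction identity turning the Cauchy kernel into a bounded-side Cauchy projection plus a constant, and bi-Lipschitz invariance of the Besov norm — supplies a rigorous substitute and in effect patches this gap; that is what your extra work buys. One small caution: your parenthetical appeal to Theorem \ref{th:one-sided_composition} for the step $\Vert u\circ\iota^{-1}\Vert_{\mathcal{H}(\hat\Gamma)}\leq C\Vert u\Vert_{\mathcal{H}(\Gamma)}$ is not quite licit, since that theorem is stated only for conformal maps between \emph{bounded} quasidisks fixing $0$; your primary justification via the bi-Lipschitz comparison $|x-y|\asymp|\iota(x)-\iota(y)|$ on the compact set $\Gamma$ (away from $0$ and $\infty$) is the one to keep.
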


Note that because of the limiting behaviour of the Cauchy kernel as $z \rightarrow \infty$, we have that $P_{-}(\Gamma) h(z) \rightarrow 0$ as $z \rightarrow \infty$, so $P_{-}(\Gamma)$ does
map into $\mathcal{D}(\Omega^-)$.
\end{subsection}
\begin{subsection}{The Dirichlet space isomorphism for WP quasidisks} \label{se:isomorphism_simply_connected}
 In this section we define a natural isomorphism between $\mathcal{D}(\disk^\pm)$ and
 $\mathcal{D}(\Omega^\pm)$ using Faber polynomials.  This isomorphism is related to an isomorphism
 of Shen \cite{ShenFaber} under an identification of $\mathcal{D}(\disk^\pm)$ with $l^2$.

 First, we define the Faber polynomials.   We restrict their definition to
 WP-class quasidisks for convenience; however, they can be (and usually are) defined in greater generality.
 \begin{definition} \label{de:Faber_polynomials}
  Let $\Omega^+$ be a WP-class quasidisk whose closure does not contain $\infty$, and let $\Omega^-$ be
  the complement in $\sphere$ of its closure. Let $p$ be a fixed point in $\Omega^+$.  Let $F^- \colon \disk^- \rightarrow \Omega^-$ be a conformal map such that $F^-(\infty)=\infty$ (that is, $F^-$ is one-to-one and onto, and holomorphic except for a simple pole at $\infty$).    Let $F^+ \colon \disk^+ \rightarrow \Omega^+$ be a conformal map such that $F^+(0)=p$.
  For $k \in \mathbb{Z}$ and $k \geq 0$, the $k$th Faber polynomial of $\Omega^+$ is
  \begin{equation} \label{eq:Faber_defn_in}
   \Phi_k(\Omega^+) = P_+(\partial \Omega)\, C_{(F^-)^{-1}}(z^k).
  \end{equation}
  where $C_{(F^-)^{-1}}$ is composition by $(F^-)^{-1}$.
  For $k > 0$ the $k$th Faber polynomial of  $\Omega^-$  is defined by
  \begin{equation} \label{eq:Faber_defn_out}
   \Phi_k(\Omega^-) = P_-(\partial \Omega)\, C_{(F^+)^{-1}}(z^{-k}).
  \end{equation}
 \end{definition}
 \begin{remark}
 Note that the polynomials depend on both the domains and the choice
 of conformal map (which is only unique up to the argument of the derivative
 at $0$ or $\infty$ respectively).
 We will usually drop the argument $\Omega^\pm$ in $\Phi_k$ when the domain is
 clear from context.
 Since $\Omega^+$ is a WP-class quasidisk, $F^\pm$ extend to quasisymmetries between
 $\partial \disk^\pm$ and $\partial \Omega^\pm$, and similarly for their inverses.
 \end{remark}
 \begin{remark}  By expanding in Laurent series it can be shown that the Faber polynomials $\Phi_k(\Omega^\pm)$ are indeed polynomials in $z$ and $1/z$ respectively.  For example, for $k\geq 0$ the Laurent expansion of $C_{F^-} (z^k)$ contains only finitely many positive powers.  For $k<0$,  there are no positive powers.   Similarly for the Faber polynomials on $\Omega_-$ (see e.g.\   Jabotinsky \cite{Jabotinsky}).
 \end{remark}
 \begin{remark} \label{re:on_Faber_definition}
 Often in the definition of the Faber polynomials the projection $P_\pm(\partial \Omega)$ is simply replaced by truncation (see e.g.\  \cite{Jabotinsky}).    Using a Cauchy integral is of course also classical (e.g., Tietz \cite{Tietz}
 where the curve is assumed to be analytic).

 Another common
 approach is to define the Faber polynomials via a generating function \cite{Durenbook, Pommerenkebook}; for the equivalence see for example \cite{Jabotinsky}.   Finally, note that one can define the Faber polynomials assuming only that $F^\pm$ is analytic and
 one-to-one in a neighbourhood of $0$ or $\infty$ respectively (where ``one-to-one in a neighbourhood of $\infty$''
 means that $F^-$ has a simple pole at $\infty$).

 The definitions given in (\ref{eq:Faber_defn_in}) (\ref{eq:Faber_defn_out}) above have the advantage that
 for any WP-quasidisk they can
 be seen as the restriction of a bounded linear isomorphism on the entire
 Dirichlet space (Theorem \ref{th:Faber_series_decomposition} and Corollary \ref{co:I_single_isomorphism} ahead).
  This conclusion requires our work in Section \ref{se:jump}.
 \end{remark}
 \begin{theorem} \label{th:one-sided_composition}
  Let $\Omega_1$ and $\Omega_2$ be WP-class quasidisks containing
  $0$ and such that $\infty \notin \partial \Omega_i$.  If $F \colon \Omega_1 \rightarrow \Omega_2$ is a conformal map taking $0$ to $0$, and $C_F \colon \mathcal{H}(\partial \Omega_2) \rightarrow \mathcal{H}(\partial \Omega_1)$ is composition by the trace of $F$ on the boundary (i.e., $h \mapsto h \circ F$), then $C_F$ is a bounded map.
 \end{theorem}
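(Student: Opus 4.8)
The plan is to factor the boundary composition operator through the harmonic Dirichlet space and exploit the \emph{conformal invariance of the Dirichlet integral}. The point is that composition on the interior is an exact isometry of harmonic Dirichlet spaces, so all of the genuine analytic content is carried by the trace and extension theorems already proved, and the only new work is an identification of boundary values.

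Concretely, I would proceed as follows. Given $h \in \mathcal{H}(\partial \Omega_2)$, apply Theorem \ref{th:trace_of_Dirichlet}(1) to extend $h$ to a harmonic $H \in \mathcal{D}_{\mathrm{harm}}(\Omega_2)$ with $\Vert H\Vert_{\mathcal{D}_{\mathrm{harm}}(\Omega_2)} \le C \Vert h\Vert_{\mathcal{H}(\partial\Omega_2)}$; here $H$ is the solution of the Dirichlet problem with data $h$. Since $F$ is holomorphic, $H \circ F$ is harmonic on $\Omega_1$, and writing $H = p + \overline{q}$ with $p,q$ holomorphic gives $H \circ F = p \circ F + \overline{q \circ F}$. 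The change of variables $w = F(z)$, using $dA_w = |F'|^2\, dA_z$, shows that $\iint_{\Omega_1}(|(H\circ F)'|^2 + |\overline{(H\circ F)}'|^2)\,dA = \iint_{\Omega_2}(|H'|^2 + |\overline{H}'|^2)\,dA$, so the Dirichlet energy is preserved, and since $F(0)=0$ we have $(H\circ F)(0) = H(0)$. Hence $H \mapsto H \circ F$ is an isometry $\mathcal{D}_{\mathrm{harm}}(\Omega_2) \to \mathcal{D}_{\mathrm{harm}}(\Omega_1)$. Composing with the bounded trace operator of Theorem \ref{th:trace_of_Dirichlet}(2) yields a bounded operator $\mathcal{H}(\partial\Omega_2) \to \mathcal{H}(\partial\Omega_1)$ whose norm is at most the product of the extension and trace norms.

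The remaining, and I expect principal, obstacle is to identify this bounded composite with the honest composition operator $C_F$, i.e.\ to show that the trace of $H \circ F$ equals $h \circ F$. This is delicate because the trace is the abstract Jonsson--Wallin trace while $F$ is only quasisymmetric on the boundary. I would resolve it by a density argument. By Theorem \ref{thm:regularity of WP-class quasicircles} the $\Omega_i$ are chord-arc, hence Jordan, domains, so by Carath\'eodory's theorem $F$ extends to a homeomorphism $\overline{F}\colon \overline{\Omega_1} \to \overline{\Omega_2}$. For $h$ in the dense subclass of restrictions of functions continuous on $\overline{\Omega_2}$, the Dirichlet solution $H$ is continuous up to the boundary with $H|_{\partial\Omega_2} = h$ (transport to the disk by a conformal map, which extends to a boundary homeomorphism by Carath\'eodory, and use the Poisson integral); then $H \circ F$ is continuous up to $\overline{\Omega_1}$ with boundary values $h \circ \overline{F}$, and for such functions the Jonsson--Wallin trace coincides with the literal boundary restriction, giving $\mathrm{Tr}(H\circ F) = h \circ F$. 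Thus the bounded composite agrees with $C_F$ on a dense subspace, which is exactly the assertion that $C_F$ is bounded.

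As an alternative to the density step, one could instead identify $\mathrm{Tr}(H \circ F)$ with the non-tangential boundary values of $H \circ F$, which equal $h \circ F$ because conformal maps of John domains respect non-tangential approach, and invoke the fact that on a chord-arc boundary the Jonsson--Wallin trace of an $H^1$ function agrees $\mathscr{H}^1$-almost everywhere with its non-tangential limit. Either route reduces the theorem to the conformal invariance computation, which is the clean heart of the argument.
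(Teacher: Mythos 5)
Your proposal is correct and follows essentially the same route as the paper's proof: extend $h$ harmonically by Theorem \ref{th:trace_of_Dirichlet}(1), use the conformal invariance of the Dirichlet energy to see that $H \mapsto H \circ F$ is an isometry of the harmonic Dirichlet spaces, and then apply the bounded trace of Theorem \ref{th:trace_of_Dirichlet}(2). The paper treats the identification $\mathrm{Tr}(H \circ F) = h \circ F$ tersely, only remarking that $F$ extends to a boundary homeomorphism, so your density/Carath\'eodory argument for that step is a welcome elaboration of the same argument rather than a different one.
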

 \begin{proof}
 First, observe that since $\Omega_1$ and $\Omega_2$ are quasidisks, $F$ has a
 continuous extension to $\partial \Omega_1$ which is a homeomorphism of $\Omega_1$
 onto $\Omega_2$.

  Let $h \in \mathcal{H}(\partial \Omega_2)$.  By Theorem \ref{th:trace_of_Dirichlet} $H$ has a harmonic extension depending
  continuously on $h$.  Now since $H \mapsto H \circ F$ is an isometry in $\mathcal{D}_{\mathrm{harm}}(\Omega)$, Theorem \ref{th:trace_of_Dirichlet} yields the claim.
 \end{proof}
 \begin{remark}  Note $F$ extends to a quasisymmetry of the boundary.  Since
  $C_{F^{-1}}=C_F^{-1}$, we have that $C_F$ has a bounded inverse under the hypotheses of the theorem.
 \end{remark}
 \begin{remark}
  Theorem \ref{th:one-sided_composition} demonstrates that the Besov space $\mathcal{H}(\partial \Omega)$ is
  in a certain sense a conformal invariant.  This is of course related to the fact that the set of harmonic
  functions of finite Dirichlet energy is a conformal invariant.
 \end{remark}

 Next, we observe some classical identities for the Faber polynomials.  We will use the convenient ``power matrix''
 notation.  Given a function  $\hat{F}^+$ which is analytic near zero and satisfies $\hat{F}(0)=0$ and $\hat{F}'(0)\neq 0$ then define the matrix coefficients $[\hat{F}^+]^m_k$ by
 \[ \hat{F}^+(z)^m= \sum_{k=m}^\infty [\hat{F}^+]^m_k z^k  \]
 for any integer $m$.
 If $m$ denotes the row number and $k$ denotes the column number, then the matrix
 is upper triangular (and doubly infinite) and the product of the
 matrices $[ \hat{F}^+]^m_k$ satisfies
 \begin{equation} \label{eq:power_matrix_homo}
  [\hat{F}^+ \circ \hat{G}^+]^m_l = \sum_{k=l}^m [\hat{F}^+]^m_k [\hat{G}^+]^k_l
 \end{equation}
 Similarly,  given a function  $\hat{F}^-$ which is analytic near $\infty$ except for a simple pole at $\infty$, that is
 $\hat{F}^-(\infty)=\infty$ and $(\hat{F}^-)'(\infty)\neq 0$, then for any integer $m$  define $[\hat{F}^-]^m_k$ by
 \[    \hat{F}^-(z)^m= \sum_{k=-\infty}^m [\hat{F}^-]^m_k z^k.  \]
 The matrix corresponding to $\hat{F}^-$ is lower triangular and
 \[  [\hat{F}^+ \circ \hat{G}^+]^m_l = \sum_{k=m}^l [\hat{F}^+]^m_k [\hat{G}^+]^k_l.  \]

 Now consider a map $F^+$ onto a WP-quasidisk $\Omega$ such that $F^+(0)=p$ and $(F^+)'(0)\neq 0$.  Let $\hat{F}^+(z)=F^+(z)-p$.
 It is easily computed that for $k < 0$ (denoting by $v^k$ the function $z \mapsto z^k$),
 \begin{align} \label{eq:conjugation_identity_p}
  C_{F^+} \, P_-(\partial \Omega) \, C_{(F^+)^{-1}} (v^k) & =  C_{F^+} \, \Phi_k(\Omega^+)  \nonumber\\
  & =  C_{F^+} \left( \sum_{l=k}^{-1} \left[ (\hat{F}^+){-1} \right]^k_l (z-p)^l \right) =
  \sum_{l=k}^{-1}  \left[ (\hat{F}^+){-1} \right]^k_l \sum_{m=l}^\infty  \left[ \hat{F}^+ \right]^l_m z^m  \nonumber\\
  & =  \sum_{l=k}^\infty \sum_{m=l}^\infty \left[(\hat{F}^+)^{-1}\right]^k_l \left[ \hat{F}+ \right]^l_m z^m
    - \sum_{l=0}^\infty \sum_{m=l}^\infty \left[(\hat{F}^+)^{-1}\right]^k_l \left[ \hat{F}^+ \right]^l_m z^m \nonumber \\
  & =  z^k - \sum_{m=0}^\infty \sum_{l=0}^m  \left[(\hat{F}^+)^{-1}\right]^k_l \left[ \hat{F}^+ \right]^l_m z^m
 \end{align}
 where in the last equality we have used the fact that the product of the matrices
 $[(\hat{F}^+)^{-1}][\hat{F}^+]$ is the identity by (\ref{eq:power_matrix_homo}).  Because $\Phi_k(\Omega^+)$ is a polynomial,
 the projection $P_-(\partial \Omega)$ need not refer to the domain $\Omega$.

 A similar computation for $F^-$ satisfying $F^-(\infty)=\infty$ and $(F^-)'(\infty)\neq 0$ shows
 that for $k \geq 0$ (again denoting by $v^k$ the map $z \mapsto z^k$),
 \begin{align} \label{eq:conjugation_identity_infty}
  C_{F^-} \, P_+(\partial \Omega) \, C_{(F^-)^{-1}} (z^k) & =  C_{F^-} \, \Phi_k(\Omega_-) \nonumber \\
  & =  z^k - \sum_{m=-\infty}^{-1} \sum_{l=m}^{-1} [(F^-)^{-1}]^k_l [F^-]^l_m z^m.
 \end{align}
 \begin{remark} \label{re:domain_of_projection_extraneous}
  In fact, (\ref{eq:conjugation_identity_p}) and (\ref{eq:conjugation_identity_infty}) hold for
  arbitrary analytic maps which are one-to-one near $0$ and $\infty$ respectively,
  if rather than applying $P_{\pm}(\partial \Omega)$ one truncates the Laurent series near
  $0$ or $\infty$ appropriately.
 \end{remark}

 The equations (\ref{eq:conjugation_identity_p}) and (\ref{eq:conjugation_identity_infty}) imply the following
 form of a classical identity.
 \begin{proposition} \label{pr:classical_Grunsky_Faber_proj_identity}
  Let $F^+ \in \Oqco(p)$.  Then
  \[  P_+(\mathbb{S}^1) \,C_{F^+} \, P_-(\partial \Omega) \, C_{(F^+)^{-1}} = \mathrm{Id}  \]
  on $\mathcal{D}(\disk^-)$.
  Furthermore, $C_{F^+} \, P_-(\partial \Omega) \, C_{(F^+)^{-1}}$ is independent of $p$
  and $(F^+)'(0)$.

  Similarly, if $F^-$ is a map such that $1/F^-(1/z) \in \Oqco(0)$, then
  \[ P_-(\mathbb{S}^1) \,C_{F^-} \, P_+(\partial \Omega) \, C_{(F^-)^{-1}}  = \mathrm{Id} \]
  on $\mathcal{D}(\disk^+)$ and
  $C_{F^-} \, P_+(\partial \Omega) \, C_{(F^-)^{-1}}$ is independent of $(F^-)'(\infty)$.
 \end{proposition}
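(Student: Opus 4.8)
The plan is to obtain both operator identities directly from the explicit Faber expansions already computed in (\ref{eq:conjugation_identity_p}) and (\ref{eq:conjugation_identity_infty}), and then to promote these monomial computations to genuine bounded-operator identities using the boundedness results of Section \ref{se:jump}. The two independence statements I would treat separately, by analyzing how the operator transforms when the conformal parametrization is changed by an automorphism of the disk.

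For the first identity, note that the monomials $v^k$ with $k<0$ span a dense subspace of $\mathcal{D}(\disk^-)$. Equation (\ref{eq:conjugation_identity_p}) exhibits $C_{F^+}\,P_-(\partial\Omega)\,C_{(F^+)^{-1}}(v^k)$ as $v^k$ plus a power series in the non-negative powers $z^m$, that is, as $v^k$ plus an element of the complementary summand $\mathcal{D}(\disk^+)$ in $\mathcal{H}(\mathbb{S}^1)=\mathcal{D}(\disk^+)\oplus\mathcal{D}(\disk^-)$. Applying the Cauchy projection onto the summand $\mathcal{D}(\disk^-)$ that contains the $v^k$ annihilates this correction term and returns $v^k$, so the composite agrees with the identity on the dense set $\{v^k:k<0\}$. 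Since $C_{F^+}$ and $C_{(F^+)^{-1}}$ are bounded by Theorem \ref{th:one-sided_composition} and the Cauchy projections are bounded by Corollary \ref{cor: traces of holomorphic functions in dirichlet space} (equivalently Theorem \ref{th:Cauchy_bounded}), the composite is a bounded operator; agreeing with the bounded identity operator on a dense set then forces equality on all of $\mathcal{D}(\disk^-)$. The second identity follows in precisely the same manner from (\ref{eq:conjugation_identity_infty}): there the image of $z^k$ (for $k\geq0$) is $z^k$ plus a series in negative powers, and projecting onto $\mathcal{D}(\disk^+)$ recovers $z^k$ on the dense span $\{z^k:k\geq0\}$ of $\mathcal{D}(\disk^+)$.

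For the independence statements I would argue as follows. Any two admissible maps onto the same domain differ by precomposition with a conformal automorphism $M$ of the relevant disk, and the relations $C_{F^+\circ M}=C_M C_{F^+}$ and $C_{(F^+\circ M)^{-1}}=C_{(F^+)^{-1}}C_{M^{-1}}$ show that replacing $F^+$ by $F^+\circ M$ conjugates the operator by $C_M$. Changing only $\arg(F^+)'(0)$, respectively $\arg(F^-)'(\infty)$, corresponds to $M$ a rotation fixing both $0$ and $\infty$; such a $C_M$ preserves each summand $\mathcal{D}(\disk^\pm)$ and commutes with the Cauchy projections, so the conjugation is trivial and the operator is literally unchanged. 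For $F^-$ the normalization $F^-(\infty)=\infty$ leaves only this rotational freedom, which already yields the stated independence. For $F^+$, changing the base point $p$ requires a non-rotational $M$; here $C_M$ commutes with the Cauchy decomposition only up to an additive constant, and I would check from (\ref{eq:conjugation_identity_p}), by tracking how the power-matrix entries of $\hat{F}^+$ transform, that the normalization-dependent contribution collapses to a constant, namely an element of $\mathcal{D}(\disk^+)$ carrying no Dirichlet energy, which is removed by the standing normalization.

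I expect the independence with respect to $p$ to be the main obstacle rather than the identities themselves: the latter are essentially algebraic consequences of (\ref{eq:conjugation_identity_p}) and (\ref{eq:conjugation_identity_infty}) together with the boundedness from Section \ref{se:jump}, whereas the former requires careful bookkeeping of the non-rotational automorphism action and of the constant ambiguity inherent in the direct-sum decomposition of $\mathcal{H}(\mathbb{S}^1)$.
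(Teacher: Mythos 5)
Your treatment of the two operator identities is correct and coincides with the paper's own (very terse) argument: both follow from equations (\ref{eq:conjugation_identity_p}) and (\ref{eq:conjugation_identity_infty}), the density of polynomials in the Dirichlet spaces, and the boundedness supplied by Theorems \ref{th:one-sided_composition} and \ref{th:Cauchy_bounded}. Your reading of the outer projection --- the one onto the summand of $\mathcal{H}(\mathbb{S}^1)$ that contains the monomials being recovered --- is also the reading consistent with those computations and with Theorem \ref{th:two_projections_id_Grunsky}.

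The independence statement is where your proposal goes wrong, and the error begins with the interpretation. You read the claim as: for a \emph{fixed} image domain $\Omega$, every Riemann map $F^+\colon \disk^+\to\Omega$ yields the same operator, so that the issue is whether conjugation by $C_M$, $M$ an automorphism of $\disk^+$, is trivial. Your argument that this conjugation is trivial for rotations is a non sequitur: the fact that $C_M$ commutes with $P_\pm(\mathbb{S}^1)$ says nothing about whether $C_M$ commutes with $C_{F^+}\,P_-(\partial\Omega)\,C_{(F^+)^{-1}}$, which is what triviality requires; the projection being conjugated is attached to $\partial\Omega$, not to $\mathbb{S}^1$. Worse, the conclusion is false: if $(g_{km})$ denotes the matrix (indices $k<0$, $m\geq 0$) of the Grunsky-type block obtained by following $C_{F^+}\,P_-(\partial\Omega)\,C_{(F^+)^{-1}}$ with the projection onto $\mathcal{D}(\disk^+)$, then replacing $F^+$ by $F^+\circ R_\theta$ replaces $g_{km}$ by $e^{i(m-k)\theta}g_{km}$ --- equivalently, classical Grunsky coefficients transform as $b_{km}\mapsto e^{-i(k+m)\theta}b_{km}$ --- so the operator genuinely changes under rotation unless all $g_{km}$ vanish. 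Your claim that moving the base point perturbs the operator only by constants is likewise false for a general WP-class quasidisk (it is true for $\Omega=\disk^+$, which is presumably the source of the intuition). So under your interpretation the proposition would be false, not merely unproven.

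What the paper means --- and what it uses later when it identifies $\gr_{ii}(f)$, for maps with $f_i(0)=p_i\neq 0$, with classical Grunsky matrices of normalized maps (``this is just a matter of convention'') --- is invariance under \emph{post}-composition with affine maps: replacing $F^+$ by $\lambda F^+ + c$, which moves $p=F^+(0)$ and $(F^+)'(0)$ arbitrarily (and translates and scales $\Omega$ along with it), does not change $C_{F^+}\,P_-(\partial\Omega)\,C_{(F^+)^{-1}}$. This genuinely is immediate from (\ref{eq:conjugation_identity_p}): the right-hand side there involves $F^+$ only through $\hat{F}^+=F^+-p$, so $p$ never enters, and only through the products $[(\hat{F}^+)^{-1}]^k_l\,[\hat{F}^+]^l_m$, which are invariant under $\hat{F}^+\mapsto\lambda\hat{F}^+$ because $[(\lambda\hat{F}^+)^{-1}]^k_l=\lambda^{-l}[(\hat{F}^+)^{-1}]^k_l$ while $[\lambda\hat{F}^+]^l_m=\lambda^{l}[\hat{F}^+]^l_m$. (Equivalently: $C_A\,P_-(\partial(A\Omega))\,C_{A^{-1}}=P_-(\partial\Omega)$ for $A(w)=\lambda w+c$, since the Cauchy kernel $d\zeta/(\zeta-z)$ is unchanged by affine changes of variable.) Density of polynomials and the boundedness you already invoked then upgrade this from monomials to the whole space, and the same argument applied to (\ref{eq:conjugation_identity_infty}) gives the statement about $(F^-)'(\infty)$.
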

 \begin{proof}  This follows immediately from (\ref{eq:conjugation_identity_p}) and (\ref{eq:conjugation_identity_infty})
  and the fact that polynomials are dense in the Dirichlet space.
 \end{proof}

 \begin{theorem} \label{th:Faber_series_decomposition}
  Let $\Omega_{\pm}$ and $F_{\pm}$ be as in Definition \ref{de:Faber_polynomials}.
  Let $h \in \mathcal{D}(\Omega_{\pm})$.  Let $h_n$ be defined by
  the Fourier series of $h \circ F^{\mp}$ as follows:
 \[ h \circ F^{\mp}(e^{i\theta}) =\begin{cases} \sum_{n= -\infty}^\infty h_n e^{in\theta}&\quad \mathrm{if } \, h \in \mathcal{D}(\Omega^+)  \\
  \sum_{n= -\infty }^{\infty} h_{-n}e^{-in\theta}&\quad  \mathrm{if } \, h \in \mathcal{D}(\Omega^-).
\end{cases} \]
  Then $h$ has a Faber series
  \[  h(z) = \sum_{n=N}^\infty h_n \Phi_{n}(\Omega^{\pm})(z),  \]
  (where $N=0$ for $\mathcal{D}(\Omega^+)$ and $N=1$ for $\mathcal{D}(\Omega^-)$)
  which converges uniformly to $h$ on compact subsets of $\Omega_{\pm}$.
 \end{theorem}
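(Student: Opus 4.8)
The plan is to realize the $M$-th Faber partial sum as the image of a truncated Fourier series under a single bounded operator, and then to identify the resulting limit with $h$ via the reproducing property of the Cauchy projections. We carry this out for $h\in\mathcal{D}(\Omega^+)$; the case $h\in\mathcal{D}(\Omega^-)$ is the mirror argument under the inversion $z\mapsto 1/z$ interchanging $\disk^+$ and $\disk^-$ (and using the first identity of Proposition \ref{pr:classical_Grunsky_Faber_proj_identity}), the shift to $N=1$ being forced by the normalization $h(\infty)=0$. Let $u=h|_{\partial\Omega}\in\mathcal{H}(\partial\Omega)$ be the boundary trace of $h$ (Theorem \ref{th:trace_of_Dirichlet}), and set $w=C_{F^-}u=h\circ F^-\in\mathcal{H}(\mathbb{S}^1)$, the composition being bounded by Theorem \ref{th:one-sided_composition} (applied to the exterior parametrization $F^-$, whose boundary values form a WP-class quasisymmetry). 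By definition the Fourier coefficients of $w$ are the $h_n$; since $w\in\mathcal{H}(\mathbb{S}^1)$ we have $\sum_n|n|\,|h_n|^2<\infty$, so $w_+:=P_+(\mathbb{S}^1)w=\sum_{n\geq0}h_n z^n$ lies in $\mathcal{D}(\disk^+)$ and its Taylor partial sums converge to it there.

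Using $\Phi_n(\Omega^+)=P_+(\partial\Omega)\,C_{(F^-)^{-1}}(z^n)$ from Definition \ref{de:Faber_polynomials} and linearity,
\[ \sum_{n=0}^M h_n\,\Phi_n(\Omega^+)=P_+(\partial\Omega)\,C_{(F^-)^{-1}}\Bigl(\sum_{n=0}^M h_n z^n\Bigr). \]
The operator $P_+(\partial\Omega)\,C_{(F^-)^{-1}}\colon\mathcal{D}(\disk^+)\to\mathcal{D}(\Omega^+)$ is bounded by Corollary \ref{cor: traces of holomorphic functions in dirichlet space} together with the boundedness of $C_{(F^-)^{-1}}$, so letting $M\to\infty$ the Faber partial sums converge in $\mathcal{D}(\Omega^+)$ to $P_+(\partial\Omega)\,C_{(F^-)^{-1}}w_+$. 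Thus everything reduces to identifying this limit with $h$.

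For the identification I would first note the reproducing property: since $h$ is holomorphic, formula (\ref{Cauchy integral for H1}) with $\overline{\partial}h=0$ yields $P_+(\partial\Omega)u=h$, while $P_-(\partial\Omega)u=0$ by Cauchy's theorem on the rectifiable curve $\partial\Omega$ (Theorem \ref{thm:regularity of WP-class quasicircles}). Then, writing $w_+=P_+(\mathbb{S}^1)C_{F^-}u$, using $C_{(F^-)^{-1}}C_{F^-}=\mathrm{Id}$ and the jump relation $P_+(\mathbb{S}^1)-P_-(\mathbb{S}^1)=\mathrm{Id}$ from (\ref{defn: sokotzki-plemelj}),
\[ C_{(F^-)^{-1}}w_+=u+C_{(F^-)^{-1}}P_-(\mathbb{S}^1)C_{F^-}u. \]
The function $P_-(\mathbb{S}^1)C_{F^-}u$ is holomorphic on $\disk^-$ and vanishes at $\infty$, and precomposition with $(F^-)^{-1}\colon\Omega^-\to\disk^-$ makes $C_{(F^-)^{-1}}P_-(\mathbb{S}^1)C_{F^-}u$ the boundary trace of a function holomorphic on $\Omega^-$ vanishing at $\infty$; hence $P_+(\partial\Omega)$ annihilates it. Therefore $P_+(\partial\Omega)C_{(F^-)^{-1}}w_+=P_+(\partial\Omega)u=h$. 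Equivalently, this is the content of Proposition \ref{pr:classical_Grunsky_Faber_proj_identity} via the expansions (\ref{eq:conjugation_identity_p})--(\ref{eq:conjugation_identity_infty}) extended from polynomials by density.

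Finally, convergence in the $\mathcal{D}(\Omega^+)$ norm upgrades to uniform convergence on compact subsets: the sub-mean-value inequality for $|h'|^2$ bounds the derivative pointwise on any compact $K$ with $\overline{D(K,r)}\subset\Omega^+$ by a constant times the Dirichlet norm, and integrating from the controlled value at the base point gives uniform convergence of the partial sums on $K$. I expect the main obstacle to be the identification step of the third paragraph --- securing the reproducing identities $P_+(\partial\Omega)u=h$ and $P_-(\partial\Omega)u=0$ on the merely chord-arc boundary, and rigorously commuting the infinite Faber sum past the bounded Cauchy--composition operator, which is exactly where the density of polynomials in $\mathcal{D}(\disk^\pm)$ and the boundedness results of Section \ref{se:jump} are indispensable.
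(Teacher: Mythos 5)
Your proof is correct and takes essentially the same route as the paper: both expand $h\circ F^{\mp}$ in its Fourier series on $\mathbb{S}^1$, transfer it through the bounded operators $C_{(F^{\mp})^{-1}}$ and $P_{\pm}(\partial\Omega)$ (Theorems \ref{th:trace_of_Dirichlet}, \ref{th:one-sided_composition}, \ref{th:Cauchy_bounded}), identify the limit with $h$ via the reproducing/annihilating properties of the Cauchy projection, and upgrade Dirichlet-norm convergence to locally uniform convergence. The only difference is bookkeeping --- the paper pushes the full two-sided series through $C_{(F^-)^{-1}}$ and then applies $P_+(\partial\Omega)$ (using the same two facts $P_+(\partial\Omega)h=h$ and $P_+(\partial\Omega)$ annihilating traces from $\mathcal{D}(\Omega^-)$ that you make explicit), whereas you project on the circle first and check the identification via the jump relation; these are equivalent reorderings of the same operators.
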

 \begin{proof}  We give the proof in the case that $h \in \mathcal{D}(\Omega^-)$.
  The other case differs only notationally.  We have that $h \in \mathcal{H}(\partial\Omega)$ by Theorem \ref{th:trace_of_Dirichlet} and $h \circ F^- \in \mathcal{H}(\partial\disk^+)$ by Theorem \ref{th:one-sided_composition}.  So $h \circ F^-$ indeed has a Fourier
  series
  \[  h \circ F^-(e^{i\theta}) = \sum_{n=-\infty}^\infty h_n e^{i n \theta}.  \]
  Since this series obviously converges in $\mathcal{H}(\partial\disk^+)$ and $C_{{F^-}^{-1}}$ is a continuous map, we have that
  the series
  \[  h(w) = \sum_{n=-\infty}^\infty h_n (F^-)^{-1}(w)^n  \]
  converges in $\mathcal{H}(\partial\Omega)$.  Applying the projection $P_+ (\partial \Omega^+)$ to both sides, which is a continuous map to $\mathcal{H}(\Omega^+)$ by Theorem \ref{th:Cauchy_bounded}, we have
  that
  \begin{align} \label{eq:temp_Faber_series}
   h(w) & = \sum_{n=-\infty}^\infty h_n P_+ (F^-)^{-1}(w)^n \nonumber \\
   & =  \sum_{n=0}^\infty h_n \Phi_n(\Omega)(w).
  \end{align}
  where we have used the fact that $P_+(\partial \Omega^+) h = h$ since $h \in \mathcal{D}(\Omega^+)$.

  We only so far have that this series converges in $\mathcal{H}(\partial\Omega^+)$.  However,
  this implies that the series converges in $\mathcal{D}(\Omega^+)$.  Since
  convergence in the Dirichlet space implies uniform convergence on
  compact subsets of $\Omega^+$ (by representing the sequence of functions using the
  reproducing kernel and applying the Cauchy-Schwarz inequality), the proof is complete.
 \end{proof}

 The Faber polynomials can be used to provide a trivialization of the Dirichlet spaces. Let $\Omega^\pm$ and $F^\pm$ be as in Definition \ref{de:Faber_polynomials}.
 Define
 \begin{equation}  \label{eq:isomorphism_simple_definition}
  \mathfrak{I}(\Omega^\pm, F^\mp) = P_{\pm}(\partial \Omega^\pm) \circ C_{(F^\mp)^{-1}} \colon \mathcal{D}(\disk^\pm) \rightarrow \mathcal{D}(\Omega^\pm).
 \end{equation}
  Note that the isomorphism $\mathfrak{I}(\Omega^\pm,F^\mp)$ has the following property:
 \begin{equation} \label{eq:zn_to_Faber}
 \begin{aligned}
  \mathfrak{I}(\Omega^+,F^-) (z^n) & = \Phi_n(\Omega^+) \quad \text{for } n \geq 0, \text{ and} \\
  \mathfrak{I}(\Omega^-,F^+)(z^n) & =  \Phi_n(\Omega^-) \quad \text{for }  n <0.
 \end{aligned}
 \end{equation}
 \begin{remark}  To avoid notational clutter, we will not explicitly write the
  restriction operator from $\mathcal{D}(\Omega^\pm)$ to $\mathcal{H}(\partial \Omega^+)$.
 \end{remark}
 \begin{corollary} \label{co:I_single_isomorphism} For $\Omega^\pm$ and $F^{\pm}$ as in Definition \ref{de:Faber_polynomials}, $\mathfrak{I}(\Omega^\pm,F^\mp)$ is a bounded linear isomorphism.
 \end{corollary}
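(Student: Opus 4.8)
The plan is to prove the statement for $\mathfrak{I}(\Omega^+,F^-)=P_+(\partial\Omega^+)\circ C_{(F^-)^{-1}}\colon\mathcal{D}(\disk^+)\to\mathcal{D}(\Omega^+)$; the case of $\mathfrak{I}(\Omega^-,F^+)$ is entirely symmetric, with the first identity of Proposition \ref{pr:classical_Grunsky_Faber_proj_identity} and the $\mathcal{D}(\Omega^-)$ half of Theorem \ref{th:Faber_series_decomposition} playing the analogous roles. First I would record boundedness. Restricting to the summand $\mathcal{D}(\disk^+)\subset\mathcal{H}(\mathbb{S}^1)$, the operator $C_{(F^-)^{-1}}\colon\mathcal{H}(\mathbb{S}^1)\to\mathcal{H}(\partial\Omega^+)$ is bounded by Theorem \ref{th:one-sided_composition} (since $(F^-)^{-1}$ is conformal between WP-class quasidisks and $\mathcal{H}$ transforms boundedly under such maps), and $P_+(\partial\Omega^+)\colon\mathcal{H}(\partial\Omega^+)\to\mathcal{D}(\Omega^+)$ is bounded by Theorem \ref{th:Cauchy_bounded}, equivalently Corollary \ref{cor: traces of holomorphic functions in dirichlet space}. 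Hence $\mathfrak{I}(\Omega^+,F^-)$ is bounded, being a composition of bounded maps.

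Next I would exhibit an explicit bounded two-sided inverse, namely $J:=P_+(\mathbb{S}^1)\circ C_{F^-}\colon\mathcal{D}(\Omega^+)\to\mathcal{D}(\disk^+)$, which is bounded for the same two reasons. That $J$ is a left inverse, $J\circ\mathfrak{I}(\Omega^+,F^-)=\mathrm{Id}$ on $\mathcal{D}(\disk^+)$, is the content of Proposition \ref{pr:classical_Grunsky_Faber_proj_identity}, and is in any case immediate from (\ref{eq:conjugation_identity_infty}): for $k\geq 0$ one has $C_{F^-}\Phi_k(\Omega^+)=z^k+(\text{strictly negative powers})$, so applying $P_+(\mathbb{S}^1)$ returns $z^k$; since $\mathfrak{I}(\Omega^+,F^-)(z^k)=\Phi_k(\Omega^+)$ by (\ref{eq:zn_to_Faber}) and the $z^k$ span a dense subspace, this gives $J\circ\mathfrak{I}(\Omega^+,F^-)=\mathrm{Id}$. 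That $J$ is also a right inverse follows from the Faber expansion: given $h\in\mathcal{D}(\Omega^+)$, Theorem \ref{th:Faber_series_decomposition} gives $h=\sum_{n\geq 0}h_n\Phi_n(\Omega^+)$ with $h_n$ the Fourier coefficients of $h\circ F^-$, and this series converges in $\mathcal{H}(\mathbb{S}^1)$; thus $J(h)=P_+(\mathbb{S}^1)(h\circ F^-)=\sum_{n\geq 0}h_n z^n\in\mathcal{D}(\disk^+)$, and by (\ref{eq:zn_to_Faber}) together with continuity of $\mathfrak{I}(\Omega^+,F^-)$ one obtains $\mathfrak{I}(\Omega^+,F^-)(J(h))=\sum_{n\geq 0}h_n\Phi_n(\Omega^+)=h$, i.e. $\mathfrak{I}(\Omega^+,F^-)\circ J=\mathrm{Id}$ on $\mathcal{D}(\Omega^+)$. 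Therefore $\mathfrak{I}(\Omega^+,F^-)$ is a bounded bijection whose inverse $J$ is bounded, which is the claim.

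I do not expect a genuine obstacle here, since all of the analytic weight has already been carried by the boundedness of the Cauchy projection (Theorem \ref{th:Cauchy_bounded}), the boundedness of one-sided composition (Theorem \ref{th:one-sided_composition}), the Faber decomposition (Theorem \ref{th:Faber_series_decomposition}), and the algebraic identity (Proposition \ref{pr:classical_Grunsky_Faber_proj_identity}); the corollary is essentially an assembly of these. The one point deserving care is the bookkeeping of the four operators between the correct spaces, in particular checking that the preimage produced by $J$ genuinely lies in the summand $\mathcal{D}(\disk^+)$ and not merely in $\mathcal{H}(\mathbb{S}^1)$ — which is guaranteed because $J$ ends with the projection $P_+(\mathbb{S}^1)$ onto that summand — and verifying that the two inverse computations are consistent, the left inverse being algebraic (Proposition \ref{pr:classical_Grunsky_Faber_proj_identity}) while the right inverse relies on the analytic convergence of the Faber series (Theorem \ref{th:Faber_series_decomposition}).
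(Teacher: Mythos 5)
Your proof is correct, and its boundedness step coincides with the paper's, but your bijectivity argument takes a genuinely different route. The paper proves injectivity exactly as in your left-inverse computation -- apply $C_{F^-}$ and read off coefficients via (\ref{eq:conjugation_identity_infty}) -- but it proves surjectivity differently: since $\Phi_k(\Omega^+)$ has exact degree $k$, every polynomial is a finite linear combination of Faber polynomials, hence lies in the image, and polynomials are dense in the Dirichlet space. As literally written, that argument only gives dense image, which for a bounded injection does not by itself yield surjectivity; one must additionally note that the bounded left inverse forces the range to be closed. Your construction of an explicit two-sided inverse $J = P_+(\mathbb{S}^1)\circ C_{F^-}$ sidesteps this subtlety: the right-inverse identity $\mathfrak{I}(\Omega^+,F^-)\circ J = \mathrm{Id}$, obtained from the Faber series decomposition (Theorem \ref{th:Faber_series_decomposition}), hits every $h \in \mathcal{D}(\Omega^+)$ exactly and moreover produces a formula for $\mathfrak{I}(\Omega^+,F^-)^{-1}$, at the price of invoking Theorem \ref{th:Faber_series_decomposition}, which the paper's proof does not use; there is no circularity in doing so, since that theorem's proof relies only on Theorems \ref{th:trace_of_Dirichlet}, \ref{th:one-sided_composition} and \ref{th:Cauchy_bounded}, not on the corollary. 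One caveat: you cite Proposition \ref{pr:classical_Grunsky_Faber_proj_identity} for the left-inverse identity, but as printed that proposition has the outer projections' signs swapped (it asserts $P_-(\mathbb{S}^1)\,C_{F^-}\,P_+(\partial\Omega)\,C_{(F^-)^{-1}} = \mathrm{Id}$ on $\mathcal{D}(\disk^+)$, which is inconsistent with (\ref{eq:conjugation_identity_infty})); your direct derivation from (\ref{eq:conjugation_identity_infty}) -- that $C_{F^-}\Phi_k(\Omega^+)$ equals $z^k$ plus strictly negative powers, so $P_+(\mathbb{S}^1)$ recovers $z^k$ -- is the correct statement, so the proof should rest on that computation rather than on the proposition as stated.
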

 \begin{proof}
  Each operator in the definition of $\mathfrak{I}(\Omega^\pm)$ is bounded, including
  the implicit trace to the boundary, by  Corollary \ref{cor: traces of holomorphic functions in dirichlet space} and Theorems \ref{th:Cauchy_bounded} and \ref{th:one-sided_composition}. Thus $\mathfrak{I}(\Omega^\pm)$ is bounded.

  We only need show that $\mathfrak{I}(\Omega^\pm,F^\mp)$ is a bijection.  We prove
  the claim for $\mathfrak{I}(\Omega^+,F^-)$; the other case is similar.
  Observe that for any $h(z) = \sum_{n=0}^\infty h_n z^n \in \mathcal{D}(\disk^+)$
  \[ \mathfrak{I}(\Omega^\pm,F^-) h = \sum_{n=0}^\infty h_n \Phi_n(\Omega^+).  \]

  To establish surjectivity, we use the density of polynomials in the Dirichlet space (this can
  be seen by using the well-known density of polynomials in $L^2$ \cite{Markusevic} and the definition of
  the Dirichlet norm). It suffices
  to show that the image contains every polynomial.  Since every Faber polynomial $\Phi_k(\Omega^+)$ is
  of degree $k$, any
  polynomial $p$ of degree $n$ or less has a unique expression as a sum of Faber polynomials,
  say $p= \sum_{k=0}^n p_k \Phi_k(\Omega^+)$.  Thus $p$ is the image of $\sum_{k=0}^n p_n z^n$ under
  $\mathfrak{I}(\Omega^+,F^-)$.

  Injectivity is established as follows.  Assume that $\mathfrak{I}(\Omega^+,F^-)(h)=0$, say,
  with $h=\sum_{n=0}^\infty h_n z^n$.
  By Theorem \ref{th:one-sided_composition}, $C_{F^-}$ is a bounded operator with bounded
  inverse $C_{(F^-)^{-1}}$, and thus  $C_{F^-} \mathfrak{I}(\Omega^+,F^-)(h)=0$.
  By equation (\ref{eq:conjugation_identity_infty}), the formula for $C_{F^-} \mathfrak{I}(\Omega^+,F^-)(h)$
  in the basis $z^n=e^{in\theta}$ is (for $k \geq 0$)
  \[  C_{F^-} \, P_+(\partial \Omega) \, C_{(F^-)^{-1}} (h)
  = \sum_{k=0}^\infty \left( h_k z^k -  h_k \left[\sum_{m=-\infty}^{-1} \sum_{l=m}^{-1} [(F^-)^{-1}]^k_l [F^-]^l_m z^m \right]\right) ,
  \]
  and thus since the coefficients of $z^k$ must be zero, $h_k =0$ for all $k \geq 0$.
   Therefore  $\mathfrak{I}(\Omega^+,F^-)$ is a bijection.
 \end{proof}

 As noted earlier, the Faber polynomials depend implicitly on the choice of map $F^\pm$, and thus so does the isomorphism.
 \begin{remark} \label{re:Shen_comparison}
  Shen \cite{ShenFaber} defined an isomorphism from $l^2$ into $\mathcal{D}(\Omega^+)$.
  Equations (\ref{eq:isomorphism_simple_definition}) and (\ref{eq:zn_to_Faber})
  give this isomorphism a function-theoretic meaning.  The isomorphisms
 agree
 under the following identification of $l^2$ and $\mathcal{D}(\disk^+)$:
 \[  (\lambda_1,\ldots) \longmapsto \sum_{n=1}^\infty \frac{\lambda_n}{\sqrt{n}} z^n \]
 except that we include a constant term in our isomorphism for domains not containing
 $\infty$. The proof of the isomorphism here differs from that of Shen.

  However, Shen's isomorphism is more general, and in fact he
 shows that the isomorphism between $l^2$ and $\mathcal{D}(\Omega^+)$ holds even for general quasidisks $\Omega^+$.
This raises
 the following interesting question: can a formula for the isomorphism similar to (\ref{eq:isomorphism_simple_definition})
 be established for a general quasidisk?  Since a quasicircle is not rectifiable in general, it is not immediately
 obvious what could replace the Cauchy projection; nor is it clear what would replace the boundary values of a
 function in the Dirichlet space.  Nevertheless, Shen's result and the quasi-invariance of the Dirichlet
 energy suggest the possibility of such a formula, perhaps with measure-valued boundary values.  See also
 Remark \ref{re:Grunsky_on_quasidisks} ahead.
 \end{remark}
\end{subsection}
\end{section}
\begin{section}{Representation of $\mathcal{D}(\riem)$ by Grunsky matrices}
\begin{subsection}{Dirichlet space isomorphism, multiply-connected case}
 Let $f \in \Rig$ and $\riem$ be the complement of the union of the closure of the images as in Section \ref{se:rigged_moduli_space_def}.  In this section we give a representation of $\mathcal{D}(\riem)$.  This
 can be viewed as a trivialization of this function space, viewed as a
 fiber space over the rigged moduli space. Let
 $\Omega_i$ and $D_i$ be as in Section \ref{se:rigged_moduli_space_def}.

\begin{proposition} \label{prop:multi_trace}
  Any harmonic $h \in \mathcal{D}_{\mathrm{harm}}(\Sigma)$ has a trace on the $i$th boundary curve $\partial \Omega_i$. This trace is in $\mathcal{H}(\partial \Omega_i)$. Furthermore, the operator $T_i \colon \mathcal{D}_{\mathrm{harm}}(\Sigma) \rightarrow \mathcal{H}(\partial \Omega_i)$ taking any function $h$ to its trace on $\partial \Omega$ is bounded.
 \end{proposition}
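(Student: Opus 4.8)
The plan is to reduce this to the trace theorem already established for simply connected WP-class quasidisks, namely Proposition~\ref{prop: traces on quasidisks} (and Theorem~\ref{th:trace_of_Dirichlet}), by localizing near one boundary curve at a time. Throughout, fix the index $i$ and recall that $\partial \Omega_i = \partial_i \riem = \partial D_i$ is a WP-class quasicircle.

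First I would show that the inclusion $\mathcal{D}_{\mathrm{harm}}(\riem) \hookrightarrow H^1(\riem)$ is bounded. By Theorem~\ref{thm:regularity of WP-class quasicircles} each $\partial_j \riem$ is a chord-arc curve, so $\riem$ is a bounded, finitely connected domain whose boundary components are mutually disjoint chord-arc curves; in particular $\riem$ is a John (indeed uniform) domain, and the Poincar\'e inequality of \cite{SmithStegenga, StanoyevitchStegenga} holds on it for all of $H^1(\riem)$, not merely for holomorphic functions. Since any $h \in \mathcal{D}_{\mathrm{harm}}(\riem)$ satisfies $\nabla h \in L^2(\riem)$ by definition, anchoring at a fixed point $z_0 \in \riem$ and invoking the mean-value property exactly as in the proof of Theorem~\ref{thm: equivalence of the dirichlet and sobolev norms} yields $\Vert h \Vert_{H^1(\riem)} \le C \Vert h \Vert_{\mathcal{D}_{\mathrm{harm}}(\riem)}$.

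Next I would localize. Let $\Omega_i = \sphere \setminus D_i$ be the WP-class quasidisk (as in Section~\ref{se:rigged_moduli_space_def}) bounded by $\partial D_i$ and containing $\riem$, so that $\Omega_i \setminus \riem = \partial D_i \cup \bigcup_{j \neq i} \overline{D_j}$. Since the $\overline{D_j}$ are compact and disjoint from $\partial D_i$, choose $\psi \in C^\infty$ equal to $1$ on a collar of $\partial D_i$ inside $\riem$ and vanishing on a neighbourhood of every $\overline{D_j}$ with $j \neq i$. Then $\psi h$, extended by zero across each $\partial D_j$ ($j \neq i$), is a well-defined element of $H^1(\Omega_i)$ supported near $\partial D_i$, and the Leibniz rule together with the previous paragraph gives $\Vert \psi h \Vert_{H^1(\Omega_i)} \le C \Vert h \Vert_{\mathcal{D}_{\mathrm{harm}}(\riem)}$. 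For $i=n$ the quasidisk $\Omega_n = \sphere \setminus \overline{D_n}$ is bounded and Proposition~\ref{prop: traces on quasidisks} applies directly, giving a trace of $\psi h$ on $\partial D_n$ in $\mathcal{H}(\partial \Omega_n)$ bounded by $\Vert \psi h \Vert_{H^1}$. For $i \le n-1$ the domain $\Omega_i$ is unbounded, so I would first precompose with a M\"obius inversion $\iota$ centred at a point of $D_i$, which carries $\Omega_i$ to a bounded WP-class quasidisk (the WP-class being invariant under the Schwarzian, hence under M\"obius maps) containing $0$ and preserving the Dirichlet energy; applying Proposition~\ref{prop: traces on quasidisks} there and transporting the result back to $\partial D_i$ via the bi-Lipschitz invariance of the Besov norm (\ref{defn: besov space norm}) under $\iota|_{\partial D_i}$ (equivalently, the bounded composition operator of Theorem~\ref{th:one-sided_composition}) again produces a trace in $\mathcal{H}(\partial \Omega_i)$. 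In either case, because $\psi \equiv 1$ on a collar of $\partial D_i$, this trace coincides with the boundary values of $h$ itself and is independent of the cutoff, so $T_i h$ is well defined and $\Vert T_i h \Vert_{\mathcal{H}(\partial \Omega_i)} \le C \Vert h \Vert_{\mathcal{D}_{\mathrm{harm}}(\riem)}$.

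The genuinely new analytic input, and the place I expect the main difficulty, is the first step: the simply connected norm-equivalence theorem does not apply to the multiply connected $\riem$, so one must verify that $\riem$ is a John domain supporting the $H^1$ Poincar\'e inequality (this is where the chord-arc property of each boundary curve from Theorem~\ref{thm:regularity of WP-class quasicircles} is essential). The remaining work is bookkeeping: isolating a single boundary curve by the cutoff $\psi$, and reducing the unbounded components $\Omega_i$ ($i \le n-1$) to bounded WP-class quasidisks so that the hypotheses of the already-proven simply connected trace theorem are met. Once $\riem$ is known to be a John domain, every other estimate in the argument is routine.
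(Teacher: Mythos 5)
Your proposal is correct, but it takes a genuinely different route from the paper. The paper's own proof is a two-line citation: since Theorem \ref{thm:regularity of WP-class quasicircles} shows each $\partial\Omega_i$ is chord-arc, the trace statement for the (multiply connected) chord-arc domain $\riem$ is read off directly from Proposition 2.2 of \cite{ChangLewis} and Theorem 3.4 of \cite{MMM} --- the same external results already used in the simply connected case. You instead reduce the multiply connected case to the paper's own simply connected trace result (Proposition \ref{prop: traces on quasidisks}) by a cutoff localization near each boundary curve, which requires one genuinely new analytic ingredient: the bound $\Vert h\Vert_{H^1(\riem)}\leq C\Vert h\Vert_{\mathcal{D}_{\mathrm{harm}}(\riem)}$ on the multiply connected domain. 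You correctly identify that the proof of Theorem \ref{thm: equivalence of the dirichlet and sobolev norms} does not carry over (the decomposition $h=F+\overline{G}$ fails on multiply connected domains, where harmonic functions acquire periods and logarithmic terms), and you correctly replace it with the Sobolev Poincar\'e inequality valid for all $H^1$ functions on John domains; the fact you need --- that a finitely connected planar domain bounded by disjoint quasicircles is uniform, hence John, hence a Poincar\'e domain in the sense of \cite{SmithStegenga} --- is a classical result (Martio--Sarvas), so this is not a gap, though it deserves an explicit citation. The remaining steps (extension of $\psi h$ by zero into $\Omega_i$, M\"obius inversion to reduce the unbounded quasidisks $\Omega_i$, $i\leq n-1$, to bounded ones, with WP-invariance following from the Schwarzian characterization and the Besov norm transported by bi-Lipschitz invariance of (\ref{defn: besov space norm}), and independence of the cutoff) are all sound; the only blemishes are cosmetic (e.g.\ $\partial D_i\not\subset\Omega_i\setminus\riem$, and Theorem \ref{th:one-sided_composition} does not literally apply to the inversion since one of the two quasidisks is unbounded, so the bi-Lipschitz argument should be the primary one, as you indicate). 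What each approach buys: the paper's proof is shorter but leans entirely on external boundary-value theory for chord-arc domains; yours is more self-contained within the paper's machinery, and as a by-product establishes the multiply connected analogue of Theorem \ref{thm: equivalence of the dirichlet and sobolev norms}, which is of independent interest.
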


\begin{proof}
Since by Theorem \ref {thm:regularity of WP-class quasicircles} the boundary components $\partial \Omega_i$ are all chord-arc curves the result is a consequence of Proposition 2.2 in \cite{ChangLewis} and Theorem 3.4 in \cite{MMM}.
\end{proof}

\begin{remark}
This proposition is also valid for $h \in \mathcal{D}(\Sigma)$ i.e. for holomorphic functions in the Dirichlet space. Indeed, every holomorphic function is harmonic and the norm of the harmonic Dirichlet space defined in \eqref{defn:norm of harmonic dirichlet} coincides with the norm of the usual (i.e. holomorphic) Dirichlet space.
\end{remark}

 We define the map
 \begin{align*}
  \mathfrak{K} \colon \mathcal{D}(\Sigma) & \longrightarrow  \mathcal{D}(\Omega_0) \oplus \cdots \oplus \mathcal{D}(\Omega_n) \\
  h & \longmapsto  ( P(\partial_0\riem)_- \, T_0 \, h, \cdots,  P(\partial_{n-1}\riem)_- \, T_{n-1} \, h, P(\partial_n \riem)_+ \, T_n\, h  )
 \end{align*}
 where $T_i$ denotes the trace of the function on the $i$th boundary curve.
 Here, all of the projection maps are negative except the last; thus, they all map
 onto $\mathcal{D}(\Omega_i)$.
By Corollary \ref{cor: traces of holomorphic functions in dirichlet space} and Theorem \ref{prop:multi_trace}, $\mathfrak{K}$ is a bounded linear map. By the Cauchy integral formula, if $\mathfrak{K}(h) = (h_0,\ldots,h_n)$
 then for $z \in \Sigma$,
 \[  h(z) = h_0(z) + \cdots +h_n(z).  \]  Thus $(h_0,\ldots,h_n) \mapsto \left.
  (h_0 + \cdots +h_n) \right|_{\Sigma}$ is a left inverse of $\mathfrak{K}$, so $\mathfrak{K}$ is injective.  On the other hand, since $h_i$ is holomorphic on
  $D_j$ for $i \neq j$, by the Cauchy integral formula
  \[  P(\partial_j \riem) h_i (w) = \frac{1}{2\pi i}\int_{\partial \Omega_j} \frac{h_i(z)}{z-w} \,dz =0  \]
  so for any $(h_0,\ldots,h_n) \in \mathcal{D}(\Omega_0) \oplus \cdots \oplus \mathcal{D}(\Omega_n)$,
  \[  (P(\partial_j \riem) (h_0 + \cdots + h_n)) (w) = h_j(w)  \]
  and thus
  \[  (h_0,\ldots,h_n) = \mathfrak{K} \left(\left. (h_0 + \cdots +h_n)\right|_{\Sigma}
  \right).  \]
  That is, $\mathfrak{K}$ is surjective.   By the open mapping theorem we have just proved
 \begin{theorem} \label{th:K_isomorphism} For $f \in \mathcal{R}$, $\mathfrak{K}$ is a bounded linear isomorphism and
  \[  \mathfrak{K}^{-1}((h_0,\ldots,h_n)) = \left. (h_0 + \cdots +h_n) \right|_{\Sigma}.  \]
 \end{theorem}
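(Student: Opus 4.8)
The plan is to guess the inverse of $\mathfrak{K}$ explicitly and then verify directly that it is a genuine two-sided inverse; once $\mathfrak{K}$ is known to be a bounded linear bijection, the open mapping theorem supplies boundedness of the inverse automatically. Boundedness of $\mathfrak{K}$ itself requires no new work, since each component is a composition of a trace operator, bounded by Proposition \ref{prop:multi_trace}, with a Cauchy projection, bounded by Corollary \ref{cor: traces of holomorphic functions in dirichlet space}. The natural candidate for the inverse is the map $\mathfrak{S}(h_0,\ldots,h_n) = \left.(h_0 + \cdots + h_n)\right|_{\riem}$: each $h_i$ is holomorphic on $\Omega_i \supseteq \riem$, so the sum is holomorphic on $\riem$ and visibly of finite Dirichlet energy.

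First I would establish $\mathfrak{S} \circ \mathfrak{K} = \mathrm{Id}$, which yields the inverse formula and injectivity simultaneously. Given $h \in \mathcal{D}(\riem)$ with $\mathfrak{K}(h) = (h_0,\ldots,h_n)$, I would use that $\partial \riem$ is the disjoint union of the chord-arc curves $\partial_i \riem$ (Theorem \ref{thm:regularity of WP-class quasicircles}) and that $h$ has boundary values in $\mathcal{H}(\partial \riem)$, so the Cauchy integral formula for the multiply connected domain applies: for $z \in \riem$,
\[ h(z) = \sum_{i=0}^n \frac{1}{2\pi i}\int_{\partial_i \riem}\frac{h(\zeta)}{\zeta - z}\,d\zeta. \]
The sign conventions built into $\mathfrak{K}$—the projection $P(\partial_i \riem)_-$ for $i<n$ and $P(\partial_n \riem)_+$ for $i=n$—are exactly those identifying the $i$th summand with $h_i(z)$, because $\riem$ lies in the unbounded complementary component of $D_i$ when $i<n$ and in the bounded component of $D_n$ when $i=n$. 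Hence $\sum_i h_i = h$ on $\riem$.

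Next I would establish $\mathfrak{K} \circ \mathfrak{S} = \mathrm{Id}$, i.e.\ surjectivity. Fixing $(h_0,\ldots,h_n)$ and setting $H = \sum_i h_i$, the crux is the vanishing of the off-diagonal terms: for $i \neq j$ the function $h_i$ is holomorphic on all of $\Omega_i$, and since the closures $\overline{D_j}$ are pairwise disjoint one has $\overline{D_j} \subset \Omega_i$, so Cauchy's theorem gives $P(\partial_j \riem)\,h_i = 0$ on the component $\Omega_j$. The diagonal term is reproduced, $P(\partial_j \riem)_{\pm}\, h_j = h_j$, because $h_j \in \mathcal{D}(\Omega_j)$ is already the Cauchy projection of its own boundary trace onto the side $\Omega_j$. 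Combining these, $P(\partial_j \riem)_\pm\, T_j H = h_j$ for every $j$, so $\mathfrak{K}(H) = (h_0,\ldots,h_n)$.

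The main obstacle is not the algebra but legitimizing the Cauchy-integral steps on the merely rectifiable (chord-arc) curves $\partial_i \riem$ and for functions that a priori lie only in the Dirichlet/Besov class rather than being holomorphic across the boundary. Both the multiply connected Cauchy integral formula used for the left inverse and the reproducing identities $P(\partial_j \riem)_\pm h_j = h_j$ rest on the solvability and boundedness of the jump problem in Theorem \ref{th:Cauchy_bounded}, together with the chord-arc regularity of Theorem \ref{thm:regularity of WP-class quasicircles}; the only genuinely delicate point is careful bookkeeping of the orientations of the several boundary components and of the associated $\pm$ projections. Once both compositions are verified, $\mathfrak{K}$ is a bounded bijection and the open mapping theorem completes the proof.
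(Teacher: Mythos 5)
Your proposal is correct and follows essentially the same route as the paper: the paper likewise takes the sum map $(h_0,\ldots,h_n) \mapsto \left.(h_0+\cdots+h_n)\right|_{\Sigma}$ as the explicit inverse, verifies it is a two-sided inverse via the Cauchy integral formula (the off-diagonal projections $P(\partial_j \Sigma)\,h_i$ vanishing by Cauchy's theorem since $h_i$ is holomorphic across $D_j$ for $i \neq j$, the diagonal term being reproduced), gets boundedness of $\mathfrak{K}$ from Proposition \ref{prop:multi_trace} and Corollary \ref{cor: traces of holomorphic functions in dirichlet space}, and concludes with the open mapping theorem. Your closing remark about legitimizing the Cauchy-integral steps on chord-arc boundaries matches the paper's implicit reliance on Theorem \ref{th:Cauchy_bounded} and Theorem \ref{thm:regularity of WP-class quasicircles}.
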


 We can now define a trivialization of $\mathcal{D}(\Sigma)$.  Recall the direct sums $\mathcal{D}^\pm$ defined by (\ref{eq:Dpm_definition}).
  Let
 \[ \mathbf{I}_f \colon \mathcal{D}^- \rightarrow \mathcal{D}(\Sigma) \]
 be defined by
 \[   \mathbf{I}_f= \mathfrak{K}^{-1} \circ \left( \mathfrak{I}(\Omega_0,f_0) \oplus \cdots \oplus \mathfrak{I}(\Omega_n,f_n) \right) \]
 where $\mathcal{D}^-$ is defined by (\ref{eq:Dpm_definition}).
 Note that for $i=0,\ldots,n-1$, $\mathfrak{I}(\Omega_i,f_i)$ is of the type
 $\mathfrak{I}(\Omega^-,F^+)$, and for $i=n$ it is of the type $\mathfrak{I}(\Omega^+,F^-)$.  It follows from Corollary \ref{co:I_single_isomorphism} and Proposition \ref{th:K_isomorphism} that
 \begin{theorem} \label{th:I_multi_isomorphism}
  For $f \in \mathcal{R}$, $\mathbf{I}_f$ is a bounded linear isomorphism.
 \end{theorem}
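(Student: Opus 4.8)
The plan is to read off from the definition that $\mathbf{I}_f$ is a composition of maps each of which has already been shown to be a bounded linear isomorphism, so that no genuinely new analysis is needed. By definition
\[ \mathbf{I}_f = \mathfrak{K}^{-1} \circ \left( \mathfrak{I}(\Omega_0,f_0) \oplus \cdots \oplus \mathfrak{I}(\Omega_n,f_n) \right), \]
so it suffices to check that the middle direct sum is a bounded isomorphism from $\mathcal{D}^-$ onto $\mathcal{D}(\Omega_0) \oplus \cdots \oplus \mathcal{D}(\Omega_n)$, and then to invoke Theorem \ref{th:K_isomorphism} for the outer factor $\mathfrak{K}^{-1}$.

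First I would identify the type of each component map so as to apply Corollary \ref{co:I_single_isomorphism} correctly. For $i=0,\ldots,n-1$ the cap $D_i=f_i(\disk^+)$ is bounded, so $\Omega_i=\sphere\setminus D_i$ is the unbounded complement; here $f_i$ plays the role of $F^+$, and $\mathfrak{I}(\Omega_i,f_i)$ is of the type $\mathfrak{I}(\Omega^-,F^+)\colon \mathcal{D}(\disk^-)\to\mathcal{D}(\Omega_i)$. For $i=n$ the cap $D_n=f_n(\disk^-)$ contains $\infty$, so $\Omega_n$ is bounded, $f_n$ plays the role of $F^-$, and $\mathfrak{I}(\Omega_n,f_n)$ is of the type $\mathfrak{I}(\Omega^+,F^-)\colon \mathcal{D}(\disk^+)\to\mathcal{D}(\Omega_n)$. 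In either case Corollary \ref{co:I_single_isomorphism} gives that the component is a bounded linear isomorphism. Matching the source spaces, the first $n$ factors have domain $\mathcal{D}(\disk^-)$ and the last has domain $\mathcal{D}(\disk^+)$, which is exactly $\mathcal{D}^-$ as defined in (\ref{eq:Dpm_definition}); matching the target spaces, the codomains assemble to $\mathcal{D}(\Omega_0)\oplus\cdots\oplus\mathcal{D}(\Omega_n)$, which is the domain of $\mathfrak{K}^{-1}$.

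Next I would invoke the elementary fact that a finite direct sum of bounded linear isomorphisms of Hilbert spaces is again a bounded linear isomorphism, with inverse the direct sum of the inverses and operator norm controlled by the maximum of the component norms. This makes the middle map a bounded linear isomorphism from $\mathcal{D}^-$ onto $\mathcal{D}(\Omega_0)\oplus\cdots\oplus\mathcal{D}(\Omega_n)$. By Theorem \ref{th:K_isomorphism}, $\mathfrak{K}$ and hence $\mathfrak{K}^{-1}$ is a bounded linear isomorphism between $\mathcal{D}(\Omega_0)\oplus\cdots\oplus\mathcal{D}(\Omega_n)$ and $\mathcal{D}(\Sigma)$. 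Composing two bounded linear isomorphisms yields a bounded linear isomorphism, completing the argument.

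There is essentially no analytic obstacle here: all the difficulty has already been absorbed into Corollary \ref{co:I_single_isomorphism} (which rests on the jump decomposition and composition boundedness of Sections \ref{se:jump} and \ref{se:isomorphism_simply_connected}) and into the Cauchy-integral argument behind Theorem \ref{th:K_isomorphism}. The only point demanding genuine care is the bookkeeping: checking that the orientations ($\disk^+$ versus $\disk^-$, bounded versus unbounded complement) are assigned consistently, so that the source of the direct sum is precisely $\mathcal{D}^-$ and its target is precisely the domain of $\mathfrak{K}^{-1}$.
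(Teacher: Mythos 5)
Your proposal is correct and follows essentially the same route as the paper: the paper likewise writes $\mathbf{I}_f = \mathfrak{K}^{-1} \circ \left( \mathfrak{I}(\Omega_0,f_0) \oplus \cdots \oplus \mathfrak{I}(\Omega_n,f_n) \right)$, notes that the components for $i<n$ are of type $\mathfrak{I}(\Omega^-,F^+)$ and the last of type $\mathfrak{I}(\Omega^+,F^-)$, and concludes by Corollary \ref{co:I_single_isomorphism} together with Theorem \ref{th:K_isomorphism}. Your version merely makes explicit the bookkeeping of domains and the elementary closure of bounded isomorphisms under direct sums and composition, which the paper leaves implicit.
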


 The intermediate maps above were useful in establishing that $\mathbf{I}_f$ is a bounded isomorphism; however, the picture is clearer if we remove the scaffolding.  Consider rather the maps
 \begin{align*}
  \mathcal{C}_{f^{-1}} \colon \mathcal{H} & \longrightarrow  \mathcal{H}(\partial \riem) \\
  (h_0,\ldots,h_n) & \longmapsto  (h_0 \circ f_0^{-1}, \ldots, h_n \circ f_n^{-1})
 \end{align*}
 where $\mathcal{H}(\partial \riem) = \mathcal{H}(\partial_0 \riem) \oplus \cdots \oplus \mathcal{H}(\partial_n \riem)$
 as in (\ref{eq:H_many_definition})
 and we have suppressed the trace operators from $\mathcal{D}(\disk^\pm)$ to $\mathcal{H}(\mathbb{S}^1)$, and
  the Cauchy projection $\mathcal{P}(\riem):\mathcal{H}(\partial \riem) \rightarrow \mathcal{D}(\riem)$
 as in equation (\ref{eq:Priem_definition}).
 We then have that $\mathbf{I}_f$ has the more transparent form
 \[  \mathbf{I}_f= \left. P(\riem) \,\mathcal{C}_{f^{-1}} \right|_{\mathcal{D}^-} \]
 which agrees with the definition (\ref{eq:I(f)_definition}) in the introduction.
\end{subsection}
\begin{subsection}{Representation of boundary values of $\mathcal{D}(\riem)$ in $\mathcal{H}$ by Grunsky operators}
\label{se:Grunsky_representation}
Recall the main question posed in the introduction.
 That is, defining $\mathcal{C}_{f}:\mathcal{D}(\Sigma) \rightarrow \mathcal{H}$ as in
  (\ref{eq:C(f)_definition})
 what is
 the image of $\mathcal{C}_{f}$?  The image is a representation of the boundary values of
 the Dirichlet space.

 To answer this recall the operator $\mathcal{W}_f \colon \mathcal{D}^- \rightarrow \mathcal{H}$ defined
 by (\ref{eq:Wf_definition}).
 \[  \mathcal{W}_f = \mathcal{C}_{f} \, \mathbf{I}_f.  \]
 By Theorem \ref{th:I_multi_isomorphism}, the images of $\mathcal{C}_{f}$ and $\mathcal{W}_f$ coincide.
 Next we define the block Grunsky operators as follows.  Let $P_{\pm}(\mathbb{S}^1)$ denote the projections from
 $\mathcal{H}(\mathbb{S}^1)$ to $\mathcal{D}(\disk^\pm)$.
 \begin{definition}[Grunsky operators] \label{de:Grunsky}
  Let $f \in \mathcal{R}$.  There are several cases.
  \begin{enumerate}
  \item For $i,j=0,\ldots,n-1$ define $\gr_{ij}(f) \colon \mathcal{D}(\disk^-) \rightarrow \mathcal{D}(\disk^+)$
  by  \[ \gr_{ij}(f)=P_+(\mathbb{S}^1) \, C_{f_j} \, P(\partial_i \riem)_- \, C_{f^{-1}_i}.  \]
  \item For $i=n$ and $j=0,\ldots,n-1$ define $\gr_{ij}(f) \colon \mathcal{D}(\disk^+) \rightarrow \mathcal{D}(\disk^+)$
  by
  \[  \gr_{ij}(f)= P_+(\mathbb{S}^1) \, C_{f_j} \, P(\partial_n \riem)_+ \, C_{f^{-1}_n}.  \]
  \item For $i=0,\ldots, n-1$ and $j=n$ define $\gr_{ij}(f) \colon \mathcal{D}(\disk^-) \rightarrow \mathcal{D}(\disk^-)$
  by
  \[  \gr_{ij}(f) = P_-(\mathbb{S}^1) \, C_{f_n} \, P(\partial_i \riem)_- \, C_{f^{-1}_i}.  \]
  \item For $i=j=n$ define $\gr_{nn}(f) \colon \mathcal{D}(\disk^+) \rightarrow \mathcal{D}(\disk^-)$ by
  \[  \gr_{nn}(f) = P_-(\mathbb{S}^1) \, C_{f_n} \, P(\partial_i \riem)_+ \, C_{f^{-1}_n}.  \]
  \end{enumerate}
 \begin{remark} \label{re:inotequalj_distinction}  The formulas above require
  a clarification. If $i \neq j$,
  since the image of $f_j$ is compactly contained in $\Omega_i$, the composition
  operator $C_{f_j}$ is a well-defined map from $\mathcal{D}(\Omega_i)$ to $\mathcal{D}(\disk_\pm)$ (where the sign depends on whether $j=n$ or $j \neq n$.)
  Furthermore this operator is clearly bounded by a direct
  computation using a change of variables.

  On the other hand, if $i=  j$, then $f_i$ maps into the complement of $\Omega_i$.
  We compose with an implicit trace $T_i:\mathcal{D}(\Omega_i) \rightarrow \partial \Omega_i$ to the boundary; thus for example if
  $i \neq n$ we ought to write in full
  \begin{equation} \label{eq:full_Grunsky_with_trace}
    \gr_{ii}(f)= P_+(\mathbb{S}^1) \circ C_{f_i} \circ T_i \circ P(\partial_i \riem)_- \circ C_{f_i^{-1}}
  \end{equation}
  with the understanding that $C_{f_i}: \mathcal{H}(\partial_i \riem) \rightarrow \mathcal{H}(\mathbb{S}^1)$.  We define $\gr_{ii}(f)$ similarly when $i=n$
  (replacing $P_+(\mathbb{S}^1)$ with $P_-(\mathbb{S}^1)$).
 \end{remark}

  Finally, we define the operator
  \[  \gr(f): \mathcal{D}^- \rightarrow \mathcal{D}^+  \]
  to be the operator with block structure
  \[  \gr(f) =
  \begin{pmatrix}
  \gr_{00}(f) & \cdots & \gr_{0n}(f) \\
  \vdots & \ddots & \vdots \\
  \gr_{n0}(f) & \cdots & \gr_{nn}(f)
  \end{pmatrix} .
  \]
 \end{definition}

 The diagonal blocks $\gr_{ii}(f)$ are related to the classical Grunsky matrices
 as follows (see  \cite{Durenbook, Pommerenkebook}).   The Grunsky coefficients of $f_n$ are $b_{km}$ where
 \begin{equation} \label{eq:Faber_phin}
  \Phi_n(\Omega_n)(f_k(z))= z^k + k \sum_{m=1}^\infty b_{km} z^{-m} .
 \end{equation}
 The general form of the above follows from Proposition \ref{pr:classical_Grunsky_Faber_proj_identity}.
 It is a classical result that the Grunsky coefficients have the generating function
 \begin{equation} \label{eq:generating_g}
  \log{\frac{f_n(z)-f_n(\zeta)}{z-\zeta}} = \log{f_n'(\infty)} - \sum_{k,m = 1}^\infty b_{km} z^{-k} \zeta^{-m}
 \end{equation}
 (this or a related generating function expression is sometimes given as the definition).
 Therefore we have that if $v_k$ is the function $z \mapsto z^k$ then
 by (\ref{eq:zn_to_Faber})
 \begin{eqnarray} \label{eq:relation_to_regular_Grunsky}
  \gr_{nn}(f) (v_k) & = & P_-(\mathbb{S}^1) \, C_{f_n} \, \mathfrak{I}(\Omega_n,f_n) (v_k) \nonumber \\
  & = & P_-(\mathbb{S}^1) \Phi_n(\Omega_n) \, f_n \nonumber \\
  & = & \sum_{m=1}^\infty k b_{mk} z^{-m}.
 \end{eqnarray}

 Similarly, the matrix of the operator $\gr_{00}(f)$ is the classical Grunsky matrix of $f_0$, although for our
 choices of normalization this has terms involving the coefficients of $\log{f_0(z)/z}$ (this ultimately
 is a result of allowing constants in the Dirichlet space).
 Generally speaking, the Grunsky matrices $\gr_{ii}(f)$ for maps satisfying $f_i(0)=p_i \neq0$ are not common in the literature; however this is just a matter of convention.  Finally, the off-diagonal Grunsky matrices $\gr_{ij}(f)$ are the so-called generalized
 Grunsky matrices originating with Hummel \cite{Hummel}; see also \cite{TT}.
 \begin{remark}
 By comparing equations (\ref{eq:Faber_defn_in}) and (\ref{eq:conjugation_identity_infty}) one can derive an
 explicit formula for the Grunsky coefficients in terms of the coefficients of the Laurent
 series at $\infty$ of $f_n$, due to Jabotinsky \cite[equation (19)]{Jabotinsky} (from whom the derivation of
 \ref{eq:conjugation_identity_infty} is taken; see also \cite{TT}).  A similar formula can be derived for the
 coefficients of $\gr_{00}(f)$; the log terms mentioned above arise because of the relation between
 the $1/z$ coefficient of $(f^{-1}(z))^m$ and those of $\log({f(z)}/{z})$ \cite[Equation (5)]{Jabotinsky}.
 \end{remark}

 We have the following theorem, which follows from our previous work.
 \begin{theorem}  Each block $\gr_{ij}(f)$ is a bounded map.  Thus $\gr(f)$ is bounded.
 \end{theorem}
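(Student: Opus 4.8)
The plan is to read each block $\gr_{ij}(f)$ as a finite composition of operators whose boundedness has already been established, and then to assemble the $(n+1)\times(n+1)$ array into a single bounded operator $\mathcal{D}^-\to\mathcal{D}^+$. In all four cases of Definition \ref{de:Grunsky} the block has the common shape $P_\pm(\mathbb{S}^1)\,C_{f_j}\,P(\partial_i\riem)_\pm\,C_{f_i^{-1}}$, the four cases differing only in the signs of the two Cauchy projections, which are fixed by whether $i=n$ and whether $j=n$. The outer composition $C_{f_i^{-1}}$ is bounded by Theorem \ref{th:one-sided_composition}, the Cauchy projection $P(\partial_i\riem)_\pm$ onto $\mathcal{D}(\Omega_i)$ by Theorem \ref{th:Cauchy_bounded} and Corollary \ref{cor: traces of holomorphic functions in dirichlet space}, and the final projection $P_\pm(\mathbb{S}^1)\colon\mathcal{H}(\mathbb{S}^1)\to\mathcal{D}(\disk^\pm)$ is the special case $\Omega^+=\disk^+$ of the same corollary. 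Thus the only factor needing a separate argument is the inner composition $C_{f_j}$, whose treatment splits according to whether $j\neq i$ or $j=i$, exactly as flagged in Remark \ref{re:inotequalj_distinction}.

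For the off-diagonal blocks $i\neq j$ I would argue by a direct change of variables. Since the closures of the caps are pairwise disjoint, $\overline{D_j}$ is compactly contained in $\Omega_i$, so for holomorphic $h\in\mathcal{D}(\Omega_i)$ the function $h\circ f_j$ is holomorphic on $\disk^\pm$ and $\iint_{\disk^\pm}|(h\circ f_j)'|^2\,dA=\iint_{D_j}|h'|^2\,dA\le\iint_{\Omega_i}|h'|^2\,dA$, with the point-evaluation term of the Dirichlet norm bounded similarly via the reproducing kernel. Hence $C_{f_j}\colon\mathcal{D}(\Omega_i)\to\mathcal{D}(\disk^\pm)$ is bounded and the off-diagonal block is a composition of bounded maps.

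For the diagonal blocks $i=j$ the map $f_i$ sends the disk across $\partial_i\riem$ rather than into $\Omega_i$, so the crude change of variables is unavailable and one must route through the boundary. Following the expanded formula of Remark \ref{re:inotequalj_distinction}, I would insert the trace operator $T_i\colon\mathcal{D}(\Omega_i)\to\mathcal{H}(\partial_i\riem)$, bounded by Proposition \ref{prop:multi_trace} (equivalently Theorem \ref{th:trace_of_Dirichlet}), followed by the boundary composition $C_{f_i}\colon\mathcal{H}(\partial_i\riem)\to\mathcal{H}(\mathbb{S}^1)$, bounded by Theorem \ref{th:one-sided_composition} since $\partial_i\riem$ bounds a WP-class quasidisk; composing with $P_\pm(\mathbb{S}^1)$ then gives a bounded diagonal block. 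This case is the genuine content of the statement, since it is precisely where the jump and trace machinery of Sections \ref{se:jump} and \ref{se:isomorphism_simply_connected} is required, and the main obstacle is simply the bookkeeping of domains and codomains needed so that each cited result applies verbatim.

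Finally, with every block bounded, $\gr(f)$ is a finite block-matrix operator between the direct sums $\mathcal{D}^-$ and $\mathcal{D}^+$, each with $n+1$ summands, so its norm is controlled by the finitely many block norms (for instance by $\sum_{i,j}\|\gr_{ij}(f)\|$) and $\gr(f)$ is bounded. I expect no difficulty beyond the diagonal case; once the mapping diagram there is fixed, the remainder is formal.
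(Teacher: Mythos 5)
Your proposal is correct and follows essentially the same route as the paper: the same factorization of each block, the same case split between off-diagonal blocks (bounded by the change-of-variables argument of Remark \ref{re:inotequalj_distinction}) and diagonal blocks (bounded via the trace, boundary composition, and Cauchy projection of Theorems \ref{th:trace_of_Dirichlet}, \ref{th:one-sided_composition} and \ref{th:Cauchy_bounded}). The only cosmetic differences are that the paper packages $P(\partial_i \riem)_\pm \, C_{f_i^{-1}}$ as the isomorphism $\mathfrak{I}(\Omega_i,f_i)$ and cites Corollary \ref{co:I_single_isomorphism} for its boundedness, whereas you bound the two factors separately, and that you make explicit the (routine) final step that a finite block matrix of bounded operators is bounded.
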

 \begin{proof} We have that
  \[  \gr_{ij}(f)=P_+(\mathbb{S}^1) \, C_{f_j} \, P(\partial_i \riem)_\pm \, C_{f_i^{-1}}
    = P_{\pm}(\mathbb{S}^1) \, C_{f_j} \, \mathfrak{I}(\Omega_i,f_i). \]
    where the sign of the projection operator is $-$ if $i\neq n$ and $+$ if $i=n$.
  Since $\mathfrak{I}(\Omega_i,f_i)$ is a bounded operator onto $\mathcal{D}(\Omega_i)$ by Corollary \ref{co:I_single_isomorphism}, we need
  only show that $P_+(\mathbb{S}^1) \, C_{f_j}$ is bounded.

  First assume that $i \neq j$.  In that case, as was observed in Remark \ref{re:inotequalj_distinction} the operator $C_{f_j}$ is bounded; the claim thus follows from Theorem \ref{th:Cauchy_bounded}.  Now assume that $i =j$.  In that case
  using equation (\ref{eq:full_Grunsky_with_trace}) to interpret $\gr_{ii}(f)$, the result follows from Theorem \ref{th:Cauchy_bounded}, Theorem \ref{th:trace_of_Dirichlet} and Theorem \ref{th:one-sided_composition}.
 \end{proof}

  If we look at one of the blocks of the Grunsky matrix, we have the interesting
  formula
  \begin{equation} \label{eq:Grunsky_one_block_nice}
  \gr_{nn}(f) = P(\mathbb{S}^1)_- \, C_{f_n} \, P(\partial_n \riem)_+ \, C_{f_n^{-1}}.
  \end{equation}

 \begin{remark}[On defining the Grunsky operator]
  Giving an
  operator definition of the Grunsky matrix is rather involved: one may compose with various isometries, and if one carries
  over the basis isometrically, one has various operators which can justly be called the Grunsky operator.
  These have the same matrix representation but are defined on different function spaces.
  For example, we could have defined our operators on $\mathcal{D}(\Omega_i)$ or $\mathcal{H}(\partial_i \riem)$ rather than on $\mathcal{D}(\disk^\pm)$.

  A more serious complication is that in order to define an operator whose matrix is the Grunsky matrix,
  one must make some assumptions on the mapping function; at a minimum, one must assume univalence.  However, doing so
  rules out one of the most important applications: that the Grunsky inequality is sufficient
  for the existence of a univalent extension to the disk of a locally univalent function.

  Thus when the Grunsky matrix is treated in the literature as an operator,
  restrictions are placed either on the mapping function or on the domain of the operator.
  For example, the domain is often restricted
  to polynomials or a formal completion of polynomials to a Hilbert space
  (e.g.\  \cite{Durenbook, Jabotinsky, Pommerenkebook}).  In that case, the function-theoretic
    interpretation of the domain of the operator tends to be obscured.
The form of the Grunsky matrix given in equation (\ref{eq:Grunsky_one_block_nice})
  is in some sense inherent in the classical Faber polynomial definition, if one restricts
   to polynomials and replaces the Cauchy projection operators with simple truncation.
  In our opinion, our operator formulation (\ref{eq:Grunsky_one_block_nice}) (and the more general Definition \ref{de:Grunsky})
  is of significant interest,
  and seems to require the restriction to WP-class quasicircles.
 \end{remark}
 \begin{remark}[On the Bergman-Schiffer form of the Grunsky operator]
  In defining the Grunsky operator, the weakest possible assumption on the mapping function
  known to the authors is univalence on the disk.  In this case the operator
  can be defined using
  a kernel function of Bergman and Schiffer \cite{BergmanSchiffer, TT} (closely related to Schiffer's formula for the Bergman kernel in terms of Green's
  function).
  However, since in the present paper
  \[ \riem= \sphere \backslash \left( \overline{f_0(\disk^+)} \cup \cdots \cup \overline{f_n(\disk^-)} \right) \]
  must be a Riemann surface (and in particular an open subset of $\sphere$), the assumption of univalence is too weak for our purposes.
 For WP-class quasidisks our formula can be shown to be equivalent to the Bergman-Schiffer form up to composition
  with some isomorphisms.
 \end{remark}
 \begin{remark}  \label{re:Grunsky_on_quasidisks}
  The fact that the Grunsky operator (for example, in Bergman-Schiffer form) and Shen's isomorphism can be defined for quasicircles
  suggests that there might be an extension of our formula (\ref{eq:Grunsky_one_block_nice}) to quasidisks.  As noted in Remark \ref{re:Shen_comparison},
  it is not clear what would take the place of the Besov space $\mathcal{H}(\partial \Omega)$ and the Cauchy
  projection, since the curve $\partial \Omega$ need not be even rectifiable.
 \end{remark}

 Recall the projections $\mathcal{P}_\pm \colon \mathcal{H} \rightarrow \mathcal{D^\pm}$ from Section \ref{se:results}.
 We have the following result.
 \begin{theorem} \label{th:two_projections_id_Grunsky} For any element $f \in \mathcal{R}$ and corresponding mulitply
  connected domain $\Sigma$, we have
 \[ \mathcal{P}_- \, \mathcal{W}_f = \operatorname{Id}  \]
 and
 \[  \mathcal{P}_+ \, \mathcal{W}_f = \gr(f).  \]
 Thus, the image of $\mathcal{W}_f$ is the graph of the Grunsky operator $\gr(f)$.
 \end{theorem}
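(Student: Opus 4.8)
The plan is to unwind $\mathcal{W}_f = \mathcal{C}_f\,\mathbf{I}_f$ into an explicit block form and then read off the action of each projection $\mathcal{P}_\pm$ directly. First I would use the factorization $\mathbf{I}_f = \mathfrak{K}^{-1}\circ\left(\mathfrak{I}(\Omega_0,f_0)\oplus\cdots\oplus\mathfrak{I}(\Omega_n,f_n)\right)$ together with the explicit left inverse of Theorem \ref{th:K_isomorphism}: for $g=(g_0,\dots,g_n)\in\mathcal{D}^-$, setting $\tilde h_k=\mathfrak{I}(\Omega_k,f_k)(g_k)\in\mathcal{D}(\Omega_k)$ gives $\mathbf{I}_f(g)=\bigl(\sum_{k=0}^n \tilde h_k\bigr)|_\riem$. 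Applying $\mathcal{C}_f$ and distributing the (suppressed) trace over the sum, the $i$th component of $\mathcal{W}_f(g)$ becomes
\[
 \bigl(\mathcal{W}_f(g)\bigr)_i = \sum_{k=0}^n C_{f_i}\bigl(\tilde h_k|_{\partial_i\riem}\bigr).
\]
The proof then reduces to computing $P_\pm(\mathbb{S}^1)$ of each summand and splitting into the diagonal piece $k=i$ and the off-diagonal pieces $k\neq i$.

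For the diagonal piece I would invoke the classical identities. For $i\neq n$ the term is $C_{f_i}\,P(\partial_i\riem)_-\,C_{f_i^{-1}}(g_i)$, and equation (\ref{eq:conjugation_identity_p}) (equivalently Proposition \ref{pr:classical_Grunsky_Faber_proj_identity}) expresses it as $g_i$ plus a function lying entirely in the complementary summand $\mathcal{D}(\disk^+)$; for $i=n$ the analogous statement with (\ref{eq:conjugation_identity_infty}) applies. Consequently the projection onto the summand containing $g_i$ returns $g_i$ itself, while the opposite projection returns precisely $\gr_{ii}(f)(g_i)$, matching the diagonal blocks of Definition \ref{de:Grunsky}. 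Extending from the basis $\{z^k\}$ to all of $\mathcal{D}(\disk^\pm)$ is justified by density of polynomials and boundedness of all operators involved (Corollary \ref{co:I_single_isomorphism}).

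The off-diagonal pieces are where the geometry enters, and this is the step I expect to be the main obstacle. For $k\neq i$, disjointness of the closures $\overline{D_i}\cap\overline{D_k}=\emptyset$ and compactness of $\overline{D_i}$ give $\overline{D_i}\subset\Omega_k$, so $\tilde h_k\in\mathcal{D}(\Omega_k)$ is holomorphic on a neighborhood of $\overline{D_i}$ (including $\infty$ when $i=n$). Since $f_i$ carries $\disk^+$ (or $\disk^-$ when $i=n$) onto $D_i$, the composition $\tilde h_k\circ f_i$ is holomorphic on the closed disk and hence lies \emph{entirely} in $\mathcal{D}(\disk^+)$ for $i<n$, or in $\mathcal{D}(\disk^-)$ for $i=n$. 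Therefore the projection onto the complementary summand annihilates it, so off-diagonal terms contribute nothing to the ``identity'' side, whereas the other projection returns exactly this term, which is the generalized Grunsky block $\gr_{ik}(f)(g_k)$. The care needed here is in checking that $\tilde h_k\circ f_i$ has finite Dirichlet energy and lands on the correct side in every index combination (in particular the behaviour at $\infty$ for the $i=n$ row and the $k=n$ column), and in confirming that the surviving projections reproduce exactly the four cases of Definition \ref{de:Grunsky}.

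Assembling the two previous steps, on the summand that receives the identity contribution only the diagonal survives, giving $\mathcal{P}_-\mathcal{W}_f(g)=g$, i.e. $\mathcal{P}_-\mathcal{W}_f=\operatorname{Id}$; on the complementary summand every term survives and $\bigl(\mathcal{P}_+\mathcal{W}_f(g)\bigr)_i=\sum_k \gr_{ik}(f)(g_k)$, i.e. $\mathcal{P}_+\mathcal{W}_f=\gr(f)$. Finally, to conclude that the image is the graph, note that any $w=\mathcal{W}_f(g)$ satisfies $w=\mathcal{P}_-w+\mathcal{P}_+w=g+\gr(f)g$ under $\mathcal{H}=\mathcal{D}^-\oplus\mathcal{D}^+$, so $\mathcal{P}_+w=\gr(f)\mathcal{P}_-w$; conversely $\mathcal{P}_-\mathcal{W}_f=\operatorname{Id}$ forces $\mathcal{W}_f$ to be injective and shows every $g\in\mathcal{D}^-$ occurs as $\mathcal{P}_-w$, so the image is exactly $\{g+\gr(f)g:g\in\mathcal{D}^-\}$, the graph of $\gr(f)$.
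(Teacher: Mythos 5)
Your proof is correct and follows essentially the same route as the paper's: the diagonal blocks yield the identity via the classical Faber--Grunsky identity (Proposition \ref{pr:classical_Grunsky_Faber_proj_identity}, equations (\ref{eq:conjugation_identity_p})--(\ref{eq:conjugation_identity_infty})), the off-diagonal terms are annihilated by $\mathcal{P}_-$ because the caps have disjoint closures so those compositions land entirely in the $\mathcal{D}^+$ summands, and $\mathcal{P}_+\,\mathcal{W}_f=\gr(f)$ is then just the definition of the Grunsky blocks. You simply write out in detail what the paper compresses into three sentences; the only discrepancy is the purely notational transposition of $\gr_{ik}$ versus $\gr_{ki}$ in the off-diagonal contributions, an ambiguity already present in the paper's own block-matrix display.
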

 \begin{proof} The second identity is just the definition
  of $\gr(f)$.  The first identity follows from (\ref{eq:Faber_phin})  if we can
  show that the $\mathcal{P}_-$ annihilates the off-diagonal matrices.
  However, this is immediate because when $i \neq j$, $C_{f_j}$ maps into
  $\mathcal{D}(\disk^+)$ for $j = 0,\ldots,n-1$ and into $\mathcal{D}(\disk^-)$
  when $j=n$.
 \end{proof}

 Finally, we observe that if $\mathcal{C}_f \colon \mathcal{D}(\partial \riem) \rightarrow \mathcal{H}$ is defined as
 in equation (\ref{eq:composition_shorthand}) (actually, this is a trace followed by a composition operator)
 then we have
 \begin{equation} \label{eq:W_formula}
  \mathcal{W}_f = \mathcal{C}_f \, \mathcal{P}(\riem) \, \mathcal{C}_{f^{-1}} = \mathcal{C}_f \, \mathbf{I}_f
 \end{equation}
 and thus the elegant formula
 \begin{equation}  \label{eq:elegant_formula}
  \gr(f) = \mathcal{P}_+ \, \mathcal{C}_f \, \mathcal{P}(\riem) \, \mathcal{C}_{f^{-1}}.
 \end{equation}
 In particular we have the following result.
 \begin{corollary} \label{co:main_question_answered}  If $f \in \mathcal{R}$, then $\mathcal{C}_f \, \mathcal{D}(\riem)$
 is the graph of $\mathcal{P}_+ \mathcal{W}_f = \gr(f)$.
 \end{corollary}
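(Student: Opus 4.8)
The plan is to deduce the corollary directly from two facts already established: that $\mathbf{I}_f \colon \mathcal{D}^- \to \mathcal{D}(\riem)$ is a surjection (Theorem~\ref{th:I_multi_isomorphism}), and the pair of projection identities $\mathcal{P}_- \, \mathcal{W}_f = \operatorname{Id}$ and $\mathcal{P}_+ \, \mathcal{W}_f = \gr(f)$ (Theorem~\ref{th:two_projections_id_Grunsky}). All of the analytic content has already been packaged into these statements, so the corollary is an assembling of them.

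First I would reduce $\mathcal{C}_f \, \mathcal{D}(\riem)$ to the image of $\mathcal{W}_f$. Since $\mathbf{I}_f$ is a bounded linear isomorphism of $\mathcal{D}^-$ onto $\mathcal{D}(\riem)$, we have $\mathbf{I}_f(\mathcal{D}^-) = \mathcal{D}(\riem)$, and hence, by the definition (\ref{eq:Wf_definition}) of $\mathcal{W}_f = \mathcal{C}_f \, \mathbf{I}_f$,
\[
  \mathcal{C}_f \, \mathcal{D}(\riem) = \mathcal{C}_f \, \mathbf{I}_f(\mathcal{D}^-) = \mathcal{W}_f(\mathcal{D}^-).
\]
So it suffices to identify the image of $\mathcal{W}_f$ with the graph of $\gr(f)$, taken relative to the fixed splitting $\mathcal{H} = \mathcal{D}^- \oplus \mathcal{D}^+$ with bounded projections $\mathcal{P}_\pm$.

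Second, I would show that for each $g \in \mathcal{D}^-$ the element $\mathcal{W}_f g$ has $\mathcal{D}^-$-component exactly $g$ and $\mathcal{D}^+$-component exactly $\gr(f)\, g$. This is immediate from Theorem~\ref{th:two_projections_id_Grunsky}: applying the projections gives $\mathcal{P}_-(\mathcal{W}_f g) = g$ and $\mathcal{P}_+(\mathcal{W}_f g) = \gr(f)\, g$, so that $\mathcal{W}_f g = g + \gr(f)\, g$ under the decomposition. As $g$ runs over all of $\mathcal{D}^-$ these are precisely the points of $\{\, g + \gr(f)\, g : g \in \mathcal{D}^- \,\}$, and conversely every such graph point arises this way from the identical computation; both inclusions therefore follow from the two projection identities alone, with no injectivity hypothesis beyond them. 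Combining with the first step yields $\mathcal{C}_f \, \mathcal{D}(\riem) = \{\, g + \gr(f)\, g : g \in \mathcal{D}^- \,\}$, the asserted graph, and the equality $\mathcal{P}_+ \, \mathcal{W}_f = \gr(f)$ records the result in the stated form.

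There is essentially no obstacle at this stage, since the hard work — boundedness of the Cauchy projections and composition operators, the isomorphism property of $\mathbf{I}_f$, and the evaluation of the diagonal and off-diagonal Grunsky blocks — has been absorbed into the cited theorems. The only point requiring a moment's care is purely bookkeeping: the word \emph{graph} is meaningful only relative to the fixed direct-sum decomposition $\mathcal{H} = \mathcal{D}^- \oplus \mathcal{D}^+$, so I would state explicitly that the graph is taken with respect to $\mathcal{P}_\pm$, after which the corollary is a formal consequence of the preceding two theorems.
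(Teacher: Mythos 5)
Your proposal is correct and follows essentially the same route as the paper: the paper's own proof is a one-line citation of Theorem \ref{th:I_multi_isomorphism}, Theorem \ref{th:two_projections_id_Grunsky}, and the identity $\mathcal{W}_f = \mathcal{C}_f\,\mathbf{I}_f$ (equation (\ref{eq:W_formula})), which is exactly the assembly you carry out. Your write-up simply makes explicit the two routine steps the paper leaves to the reader — surjectivity of $\mathbf{I}_f$ giving $\mathcal{C}_f\,\mathcal{D}(\riem) = \mathcal{W}_f(\mathcal{D}^-)$, and the projection identities identifying that image with the graph of $\gr(f)$ relative to the splitting $\mathcal{H} = \mathcal{D}^- \oplus \mathcal{D}^+$.
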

 \begin{proof}  This follows from Theorem \ref{th:I_multi_isomorphism}, Theorem \ref{th:two_projections_id_Grunsky}
 and equation (\ref{eq:W_formula}).
 \end{proof}

\end{subsection}
\end{section}
\begin{section}{Applications} \label{se:applications}
 The problems solved in this paper are quite natural and can be understood in purely complex function-theoretic terms.
 However, it is motivated by conformal field theory and Teichm\"uller theory.  We now outline this motivation.

  In two-dimensional
 conformal field theory a central object is the class of Riemann surfaces bordered by $n+1$ curves homeomorphic
 to $\mathbb{S}^1$, endowed further with bijective parameterizations $\phi=(\phi_0,\ldots,\phi_n)$ of the boundary by maps
 $\phi_i \colon \mathbb{S}^1 \rightarrow \partial_i \riem$.  Call the set of such Riemann surfaces up to biholomorphisms preserving the
 parameterizations the rigged moduli space.  A heuristically equivalent model of this moduli space is the set of
 Riemann surfaces together with non-overlapping conformal maps of the disk into a compact Riemann surface with distinguished points,
 obtained by sewing on copies of the disk via the parameterizations.
 We are concerned with the genus zero case here.

 One gets differing moduli spaces depending on the analytic
 category of the parameterizations; this corresponds also to the regularity of the
 boundary of the images of the non-overlapping maps. In the conformal field theory literature, the most common choice is
 analytic or $C^\infty$ bijections.  If this condition is weakened to quasisymmetric parameterizations, D. Radnell and E. Schippers showed in  \cite{RadnellSchippers_monster, RS_fiber} that one can draw a correspondence between this moduli space and the quasiconformal
 Teichm\"uller space of bordered Riemann surfaces.  However, this might be too weak a condition
 on the parameterizations in order to carry out some of the constructions in the definition of conformal field theory.  The correct
 class appears to be the WP-class quasisymmetries.  These have sufficient regularity and
 are obtained by completion of the analytic parameterizations (L. Takhtajan and L. P. Teo \cite{TT, TT_expo}). Furthermore,
 WP-class parameterizations maintain the connection
 to the Teichm\"uller theory of bordered surfaces, thanks to the work of G. Cui \cite{Cui}, G. Hui \cite{GuoHui},  Takhtajan and Teo \cite{TT}
 and others. Radnell, Schippers and Staubach have shown in \cite{RSS_Hilbert}  that this connection holds for arbitrary finite genus and number of boundary curves.

 Sewing on disks using WP-class quasisymmetries results in a rigged moduli space consisting of a Riemann surface
 with conformal maps onto non-overlapping WP-class quasidisks.  In the case of one boundary curve, this results
 in a model of the universal Teichm\"uller space which is a Hilbert
 manifold rather than a Banach manifold  \cite{GuoHui, TT}.

 The construction of genus-zero conformal field theory from vertex operator algebras was accomplished by Y.-Z. Huang and L. Kong in \cite{HuangKong}. It is based on the deep mathematical notion of a chiral conformal field theory which was constructed by Huang (see \cite{Huang_review03,Huang_DiffEq}). The foundation of these results is the isomorphism, established by Huang in \cite{Huang}, between the category of vertex operator algebras and the category of geometric vertex operator algebras based on the rigged moduli.  A fundamental part of the chiral theory is the construction of the determinant line bundle of $\overline{\partial} \oplus \text{pr}$ over the rigged moduli space, where $\text{pr}$ is the projection of the boundary values onto
 the space of those boundary values with Fourier series with only
 negative terms ($a_n e^{-in\theta}$, $n>0$) under the parametrizations.  Equivalently
 one can construct the determinant line bundle of the map $\pi$ which is the projection
 from holomorphic functions on the Riemann surface to the set of negative Fourier series.
  \cite[Proposition D.3.3]{Huang}. In order
 to define the determinant line it is necessary to use the Sokhotski-Plemelj jump formula and the space of
 holomorphic functions $\mathcal{C}_f \, \mathcal{D}(\riem)$ defined in Section \ref{se:results}. In \cite{Huang}, the parameterizations
 are assumed to be analytic, and the holomorphic functions on the Riemann surface $\riem$ were
   assumed to be smooth up to the boundary.  Thus he initially avoided these analytic difficulties.  However, in order to
 apply the theory of elliptic operators an analytic completion was necessary, which resulted in
 the class of Sobolev functions $H^{s-\frac{1}{2}}(\partial \riem)$ \cite[Appendix D.3]{Huang}, and the analytic requirements
 were satisfied for $s \geq 1$.

 For analytic curves, this $H^{\frac{1}{2}}(\partial \Sigma)$ space is clearly the boundary values of holomorphic functions of bounded
 Dirichlet energy. As mentioned above,
 Takhtajan and Teo \cite{TT, TT_expo} showed that the WP-class quasisymmetries are the analytic completion of the analytic
 bijections of the circle, and furthermore that they comprise a topological group.  These
 results strongly
 motivate the extension of Huang's construction to Riemann surfaces
 with borders parameterized by WP-class
 quasisymmetries.  Thus, we are required to extend the
 Sokhotski-Plemelj jump problem to WP-quasicircles, and also find the Hilbert space of boundary values
 of holomorphic/anti-holomorphic functions of finite Dirichlet energy.
 In this paper, we have shown that for WP-class quasicircles, the role of  $H^{\frac{1}{2}}(\partial \riem)$ for more
 regular curves is taken on
 by a the Besov space $\mathcal{H}(\Gamma)$, and that the Cauchy integral
 operator in the Sokhotski-Plemelj jump formula is indeed a bounded projection for these curves and this class
 of boundary values. This is a necessary step in the construction of the determinant line bundle.

 Further evidence that the WP-class parameterizations are an appropriate choice for
 the parameterizations comes from the following striking
 results of
 Takhtajan and Teo \cite{TT}.  First, in the case of one boundary curve in genus zero, the Grunsky matrix provides an embedding
 of the universal Teichm\"uller space into an infinite-dimensional Siegel upper half plane (that is, the Segal-Wilson
 universal Grassmanian).
 Second, the Grunsky operators are Hilbert-Schmidt precisely for WP-class quasicircles (see also Hui \cite{GuoHui} and Y. Shen \cite{ShenFaber, Shen}). In the setting of Huang \cite{Huang},
 this fact ought to translate into sufficient regularity for the existence of a determinant of
 $\pi$, given the relation between the Grunsky operator and the Sokhotski-Plemelj jump formula
 obtained
 here (equation (\ref{eq:elegant_formula})), and the relation between this decomposition and $\pi$ obtained already
   in \cite{Huang}.  A crucial point is that our formula for the Grunsky operator depends
   explicitly on the Cauchy projection
   associated with the quasicircle.

 Aside from the problems described above, there is increasing interest in the WP-class universal Teichm\"uller
 space.  An overview of the literature can be found in the introduction of the recent paper of Shen \cite{Shen_characterization}.
 That paper also contains the solution to the problem of intrinsically characterizing WP-class quasisymmetries.
\end{section}

\end{document}